\documentclass[12pt,reqno]{amsart}
\usepackage{amssymb,amsfonts,amsthm,amsmath,mathrsfs}
\usepackage{cite}
\usepackage[shortlabels]{enumitem}
\usepackage[left=1 in,top=1 in,right=1 in, bottom=1 in]{geometry}
\usepackage{graphicx}
\usepackage{color}
\usepackage[pagebackref=false]{hyperref}
\def\d{{\rm d}}
\def\eqdef{\stackrel{\rm def}{=}}
\definecolor{darkred}{rgb}{.70,.12,.20}

\definecolor{darkgreen}{rgb}{.20,.52,.14}

\definecolor{byz}{rgb}{.44,.16,.39}

\numberwithin{equation}{section}

\newtheorem{theorem}{Theorem}[section]
\newtheorem{lemma}[theorem]{Lemma}

\newtheorem{definition}[theorem]{Definition}
\newtheorem{corollary}[theorem]{Corollary}
\newtheorem{assumption}[theorem]{Assumption}

\newtheorem{proposition}[theorem]{Proposition}

\theoremstyle{remark}
\newtheorem{remark}[theorem]{\bf{Remark}}
\newtheorem{example}[theorem]{\bf{Example}}
\usepackage{todonotes}

\newcommand{\varep}{\varepsilon}

\newcommand{\beq}{\begin{equation}}
\newcommand{\eeq}{\end{equation}}
\newcommand{\beqs}{\begin{equation*}}
\newcommand{\eeqs}{\end{equation*}}
\newcommand{\ba}{\begin{array}}
\newcommand{\ea}{\end{array}}
\newcommand{\beas}{\begin{eqnarray*}}
\newcommand{\eeas}{\end{eqnarray*}}
\newcommand{\bea}{\begin{eqnarray}}
\newcommand{\eea}{\end{eqnarray}}
\newcommand{\bal}{\begin{align}}
\newcommand{\eal}{\end{align}}

\newcommand{\bals}{\begin{align*}}
\newcommand{\eals}{\end{align*}}

\newcommand{\tnum}{\rm(\roman*)}
\newcommand{\rnum}{\rm(\alph*)}

\newcommand{\R}{\ensuremath{\mathbb R}}

\newcommand{\N}{\ensuremath{\mathbb N}}

\newcommand{\bds}{\begin{displaystyle}}
\newcommand{\eds}{\end{displaystyle}}

\newcommand{\remove}[1]{} %-- ON
\renewcommand{\remove}[1]{#1} % OFF

\definecolor{darkred}{rgb}{.70,.12,.20}

\definecolor{darkgreen}{rgb}{.20,.52,.14}

\title[Linear non-divergence parabolic equations in non-cylindrical sets]
{Linear non-divergence parabolic equations in non-cylindrical  space-time sets
}

\author[L. Hoang]{Luan Hoang$^{1}$}
\address{$^1$Department of Mathematics and Statistics,
Texas Tech University,
1108 Memorial Circle, Lubbock, TX 79409--1042, U. S. A.}
\email{luan.hoang@ttu.edu}

\author[A. Ibragimov]{Akif Ibragimov$^{1,2,*}$}
\address{$^{2}$Oil and Gas Research Institute, Russian Academy of Sciences, 3 Gubkin Str, Moscow, Russia, 119333 }
\email{ilya1sergey@gmail.com}

\thanks{$^*$Corresponding author.}

\date{\today}

\subjclass[2020]{35A09, 35B50, 35B40}
\keywords{non-divergence parabolic equation, non-cylindrical domain, the Growth Lemma, asymptotic analysis, qualitative theory of PDEs}
 
\begin{document}

\begin{abstract} 
We study linear parabolic equations of the second order in non-divergence form in a general set which is non-cylindrical with respect to the spatial and time variables. The restriction of the set on any bounded time interval is bounded, but the spatial diameter of each fixed-time cross section can be unbounded as time tends to infinity. For homogeneous problems, we obtain exponential,  power and other intermediate decaying rates for the solutions in different scenarios. For inhomogeneous problems, we obtain all-time and asymptotic, as time tend to infinity, estimates  for the solutions in terms of the data on the parabolic boundary and forcing functions. The analysis requires subtle properties of general space-time sets and an iteration scheme to bootstrap the Growth Lemma. The time steps for such an iteration need not be constant and are adapted to the growth of the diameter.
\end{abstract}

\maketitle 
\tableofcontents 

% \pagestyle{myheadings}\markboth{\sc L.~Hoang and A.~Ibragimov}
% {\sc Linear parabolic equations in non-divergence form  in non-cylindrical  sets}

%%%%%%%%%

\section{Introduction }\label{intro}
 
In the theory of parabolic equations, particularly for long-term dynamics, the following two situations are usually considered. 

\begin{enumerate}
    \item[(a)] Parabolic equations in the divergence form
\beq \label{divform}
\frac{\partial u}{\partial t} - \sum_{i,j=1}^n \frac{\partial}{\partial x_i}\left( a_{i,j}(x,t) \frac{\partial  u}{\partial x_j}\right)+\sum_{i=1}^n b_i(x,t) \frac{\partial u}{\partial x_i}+c(x,t)u(x,t)=f(x,t),
\eeq
where $x\in \R^n$ is the spatial variable, $t\ge 0$ is the time variable, and $u(x,t)$ is the unknown function. (The terminology \textit{divergence form} is referred to  the second term in \eqref{divform}.)

\item[(b)] The space-time cylindrical sets in $\R^{n+1}$, i.e., $(x,t)\in U\times [0,\infty)$, where  $U$ is a fixed open set. 
\end{enumerate}

Regarding (a), equations in the divergence form  can be obtained by some conservation law and formulation of the flux across the boundary. They have been studied intensively with a vast liturature, see e.g. \cite{LadyParaBook68,LiebermanPara96,IKO} and references therein. 
The non-divergence equations are 
\beq \label{nondiv}
\frac{\partial u}{\partial t} - \sum_{i,j=1}^n a_{i,j}(x,t) \frac{\partial ^2 u}{\partial  x_i \partial x_j}+\sum_{i=1}^n b_i(x,t) \frac{\partial u}{\partial x_i}+c(x,t)u(x,t)=f(x,t).
\eeq
The equation \eqref{divform}, in the case $a_{ij}$ are smooth, can be converted to \eqref{nondiv}.
In spite of that, non-divergence equations are not just mathematical generalizations of the divergence form. 
They can come naturally from the probabilistic form of the conservation of mass that was used by Einstein \cite{Einstein1905} in his study of the Brownian motion, see details in \cite{HI3Rus}. 
Technically speaking, the divergence form of the second term in \eqref{divform} allows the energy method to be used in a powerful way. Such a tool is not applicable to the non-divergence equations and, hence,  their analysis is greatly limited. Therefore, the non-divergence equations always pose more challenges and require different approaches from the divergence-form counterpart. For example, Di Giorgi \cite{DeGiorgi57} and Nash \cite{Nash1958} obtained the H\"older regularity for elliptic equations in the divergence-form long before Krylov and Safonov \cite{KrylovSafonov1981} did, with very different techniques, for the non-divergence ones. 

For the long-time dynamics of parabolic equations, most research is devoted to (b). 
However, many phenomena occur in a domain with the boundary changing in time. For example, they can be the melting ice/snow problem, the changing sea level in a long period of time or geological problems of the layering.
They pose the problem of investigating the stability, as $t\to\infty$, of the solutions of the parabolic equation of non-divergence form with $x$ belonging to the sets which can change in $t$. 
Due to the last factor, the results up to now require those sets to have limited geometries \cite{Ushakov1981} even for the divergence form. This is due to the use of the energy method via the integration by parts. 
We aim to obtain more general results by using a different approach.

With these motivations,   we focus on the parabolic equations in non-divergence form in a general non-cylindrical set, i.e., neither (a) and nor (b). 
There are not many papers dedicated to this topic. We review a few of them and other related work here.
The paper \cite{Cerem1968} obtains asymptotic estimates, as $t\to\infty$, using an explicit barrier function for all time. This method leads to constraints on the coefficients of the equation and the shape of the boundary. 
For  the stabilization of the solution  of the Cauchy  problem, i.e., the existence of the limit of $u(x,t)$ as $t\to\infty$, see the review and results in  \cite{Denisov2005}. 
Also, the paper \cite{AD2014} investigates the necessary and sufficient conditions  for the stabilization in the case (b) above with $U$ being a fixed unbounded domain in $\R^n$.

In this article, we extend the method of the Growth Lemma to study the behavior of the solutions $u(x,t)$, as $t\to\infty$, depending on the changes in $t$ of the domain of the function $x\mapsto u(x,t)$.
The  Growth Lemma is a tool that refines the Maximum Principle to evaluate the decay of the solutions either in the spatial or time variable. This technique was initiated by E. M. Landis in a series of papers published in \textit{Uspehi Mat. Nauk.} \cite{Landis1959,Landis1963}, see also \cite{Landis1968}. It plays a crucial role in the celebrated work \cite{KrylovSafonov1981} mentioned above. For its recent developments, see \cite{Safonov2010,KrylovReview2021} are references there in.
In the current study, we will use the variation implemented in \cite{HI4}, see also  \cite{HI3Rus,HI5}, to deal with rather general non-cylindrical space-time sets. The spatial cross section (for fixed time) of the set is allowed to grow infinitely large as time goes to infinity. We will show that if the growth is not too fast, the diffusion is still strong enough to deduce a decaying estimate for the solutions. We restrict only to bounded drifts and treat both homogeneous and inhomogeneous problems.

The paper is organized as follows.
Section \ref{notation} contains the notation and some simple facts which will be  used through out the paper.
In Section  \ref{domainsec}, we specify two types of boundary points for general space-time sets in Definition \ref{parabb}. Namely, the upper-base where the PDE is additionally defined, and the parabolic boundary where the boundary conditions are imposed. Lemma \ref{rmk2} describes the relations between the constituents of a general set $Q$, that is, its interior points and boundary points of each type, with the constituents of its restrictions on bounded  time intervals. Although elementary, they are subtle and crucial to  our later study. To avoid peculiar geometries, our space-time sets of interest will be required to satisfy Assumption \ref{GGamcond}. Some typical but already complicated cases  are presented in Propositions \ref{gset}, \ref{gset1} and \ref{gset2}. 
In Section \ref{MaxGrowth}, we present the main tools to study PDEs in non-divergence form for general sets which are the Maximum Principle -- Theorem \ref{genmax} -- and Growth Lemma -- Lemma \ref{stgrowth}. In Theorem \ref{maxcor}, we establish the decrease in time for the spatial maximum. This is a fundamental property of the solutions.
In the case of cylindrical space-time sets, it is a trivial consequence of the Maximum Principle. However, in the non-cylindrical case, it is not obvious. In fact, the proof is much more delicate and requires Assumption \ref{GGamcond} and subtle relations in Lemma \ref{rmk2}. Theorems \ref{maxprin2} and \ref{lemG2} are applications of  the Maximum Principle and Growth Lemma to estimating the solution of inhomogeneous problems. Their roles are two fold. On the one hand, the obtained estimates are primary which will be improved much more later. On the other hand, they serve as the basic steps for later iterations to obtain large time estimates.
 Section \ref{homsec} is focused on homogeneous problems. We start with a preliminary result -- Theorem \ref{STthm0} -- which derives a power decay in time for very general sets in the case of zero drift. To improve such a decay, we set up a general iteration scheme in subsection \ref{scheme} for both cases of zero drift and bounded drifts utilizing the Growth Lemma.
The case of zero drift is treated in subsection \ref{HZsec} with the main result being  Theorem \ref{STthm1}. Other improvements for more specific situations are in  Theorem \ref{STthm2}, Example \ref{egln} and Corollary \ref{cor1}.
The case of bounded drifts is treated in subsection \ref{HBsec}. Theorem \ref{STthm3} contains the main estimates for various scenarios regarding the spatial diameters' growth as time tends to infinity. An intermediate decaying rate for the solution is established in Theorem \ref{STthm4} which corresponds to an intermediate diameters' growth.
In Section \ref{inhomsec}, we deal with inhomogeneous equations and inhomogeneous Dirichlet boundary conditions.  The preliminary recursive estimates are established  Proposition \ref{genlem}.
In the case of zero drift, they immediately result in the long-time behavior in Theorem \ref{NHthm0}.
In the case of bounded drifts, although they already allow us to obtain estimates for the solutions for all time, it is still difficult to see the long-time behavior. Moreover, Example \ref{egs} shows that, unlike the cylindrical case, the solution may not be bounded although the forcing functions, initial and boundary data are.
In Theorem \ref{NHthm}, under some specified conditions on the monotonicity and/or decaying rate of the forcing function and boundary data, we obtain much more specific estimates for the solution when $t\to\infty$.
Finally, Appendices \ref{apA}--\ref{apD} contain technical proofs of some results in Section \ref{domainsec}.

\section{Notation}\label{notation}

Throughout the paper, the spatial dimension $n\ge 1$ is fixed. 
For a vector $x\in\R^n$, its Euclidean norm is denoted by $|x|$.
 Let $\mathcal M^{n\times n}$ denote the set of $n\times n$ matrices of real numbers, and $\mathcal M^{n\times n}_{{\rm sym}}$ denote the set of symmetric matrices in $\mathcal M^{n\times n}$.
For two matrices $A,B\in \mathcal M^{n\times n}$, their inner product $\langle A,B\rangle$ is the trace ${\rm Tr}(A^{\rm T}B)$.

For a non-empty set $S$ and a function $f:S\to \R$, we recall that the positive and negative parts of $f$ are  $f^+=\max\{0,f\}$ and $f^-=(-f)^+$. Then one has  
\beq   \label{fpm}
 f=f^+-f^-, \quad 
|f|=f^+ + f^-=\max\{f^+,f^-\}.
\eeq
\beq \label{ffrel}
0\le f^+\le |f|, \quad 0\le f^-\le |f|,\quad  -f^-\le f\le f^+.
\eeq 
If $f$ is bounded then
\beq \label{sup3}
\sup_S |f|=\max\left\{\sup_S f^+,\sup_S f^-\right \}.
\eeq

Define the projections $\mathbb P_1:\R^{n+1}\to \R^n$ and $\mathbb P_2:\R^{n+1}\to \R$ by 
\beq\label{PP12}
\mathbb  P_1(x,t)=x\text{ and }\mathbb  P_2(x,t)=t, \text{ respectively, for }(x,t)\in\R^n\times \R.
\eeq 

For $x_0\in\R^n$ and $R>0$, denote by $B_R(x_0)$ the open ball in $\R^n$ of radius $R$ centered at $x_0$.
For $J\subset \R$, define
$\mathcal C_{x_0,R}^J=B_R(x_0)\times J$.

Let $Q$ be a  subset of $\R^{n+1}$, $J$ be a subset of $\R$, and $t\in\R$.
Denote
\beqs
Q_J=Q\cap (\R^n\times J) \text{ and }  Q_t=Q_{\{t\}}. 
\eeqs

For $x,y\in \R^m$, denote by $[x,y]$  the line segment connecting $x$ and $y$, i.e.,  
\beq \label{lineseg}
[x,y]=\{(1-\tau)x+\tau y:0\le \tau\le 1\}.
\eeq 

For  a non-empty, bounded subset $U$  of $\R^n$ and a point $y\in \R^n$, denote 
\begin{align}
\label{di2}
{\rm dist}(y,U)&=\inf\{|x-z|:x\in U\}=\min\{|x-z|:x\in\overline U\},\\
\label{Rs3}
R_*(y,U)&=\max\{|x-y|:x\in\overline U\}.
\end{align}
Recall that the diameter of $U$ is  
\beqs 
{\rm diam} (U)=\sup\{|x-z|:x,z\in  U\} =\max\{|x-y|:x,z\in \overline U\}.
\eeqs 

It is clear that when $U$ has a non-empty interior and $y\not\in \overline U$, one has 
\beq\label{Rrpos}
R_*(y,U)>{\rm dist}(y,U)>0 \text{ and } {\rm diam}(U)>0.
\eeq 

\begin{lemma}[{ \cite[Lemma 4.13]{HI4} }]\label{choicey}
Let $U$ be a non-empty, bounded subset of $\R^n$.  For any $r>0$, there exists a point $y\not \in \overline U$ such that
    \beq\label{rRd}
    {\rm dist}(y,U)=r\text{ and }R_*(y,U)=r+{\rm diam}(U).
    \eeq
\end{lemma}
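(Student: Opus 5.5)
Let $d={\rm diam}(U)$, which is finite because $U$ is bounded. The plan is to choose two points $x_0,z_0\in\overline U$ with $|x_0-z_0|=d$, and then place $y$ on the line through them, beyond $z_0$, at distance $r$ from $z_0$. The maximum in the definition of the diameter is attained by compactness of $\overline U\times\overline U$.

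If $d=0$, then $U$ is a single point $\{x_0\}$. In that case take any $y$ with $|y-x_0|=r$: we get ${\rm dist}(y,U)=r=R_*(y,U)$, and $y\notin\overline U$ since $r>0$.

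If $d>0$, let $e=(z_0-x_0)/d$ be a unit vector and set $y=z_0+re$.

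\emph{Upper bound on $R_*$.} For any $x\in\overline U$ we have $|x-y|\le |x-z_0|+|z_0-y|\le d+r$.

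\emph{Lower bound on $R_*$.} The points $x_0,z_0,y$ are collinear in this order, so $|x_0-y|=d+r$. Together with the upper bound this gives $R_*(y,U)=r+d$.

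\emph{Distance equal to $r$.} We have ${\rm dist}(y,U)\le |z_0-y|=r$. For the reverse inequality, which is the one nontrivial point, the idea is to use maximality of the diameter pair. Take any $x\in\overline U$. Since $x_0\in\overline U$, we have $|x-x_0|\le d$. By the triangle inequality,
\[
|x-y|\ge |y-x_0|-|x-x_0|\ge (d+r)-d=r.
\]
Hence ${\rm dist}(y,U)=r$.

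Finally, $y\notin\overline U$, because every point of $\overline U$ lies within distance $d$ of $x_0$, while $|y-x_0|=d+r>d$.

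No step is really an obstacle. The only care needed is the lower bound $|x-y|\ge r$, and it follows from the triangle inequality applied with $x_0$, the endpoint of the diameter pair opposite to $y$.
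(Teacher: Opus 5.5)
Your proof is correct. The degenerate case ${\rm diam}(U)=0$ is handled exactly as in the paper; for ${\rm diam}(U)>0$ the paper only cites \cite[Lemma 4.13]{HI4}, and your diameter-pair argument proves that case in a self-contained way, using $d>0$ only to define the direction $e$, which fits the paper's remark that the cited proof carries over whenever ${\rm diam}(U)>0$.
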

      Although \cite[Lemma 4.13]{HI4} was proved for an open set $U$, its proof still works in the case ${\rm diam}(U)>0$. In the case ${\rm diam }(U)=0$, the set $U$ is $\{x_0\}$, hence taking $y\in\partial B_r(x_0)$ gives 
      ${\rm dist}(y,U)=R_*(y,U)=r$ and  we obtain \eqref{rRd} again.

\section{Geometry of general space-time sets}  \label{domainsec} 

In the theory of IBVP for parabolic equations, the parabolic boundary plays a special role. Below are its definition and properties.

\begin{definition}\label{parabb}
Let $Q$ be  subset of $\R^n \times \R$. 
The upper base of $Q$, denoted by $\gamma(Q)$, is the set of points $(x,t) \in \R^n\times \R$ such that  there exists a number $h>0$ for which 
\beq\label{CCpt}
\mathcal C^{(t-h,t)}_{x,h}\subset Q \text{ and } \mathcal C^{(t,t+h)}_{x,h}
\subset \R^{n+1}\setminus Q.
\eeq
Define
\beqs 
\widetilde Q=Q\cup \gamma(Q)\text { and } 
\Gamma(Q)=\partial Q\setminus\gamma(Q).
\eeqs 
We call $\Gamma(Q)$ the parabolic boundary of $Q$.
\end{definition}

We clarify the order of operations acting on sets in our notation by 
$$\overline{Q}_J=(\overline{Q})_J,\ 
 \gamma(Q)_J=(\gamma(Q))_J,\  
 \Gamma(Q)_J=(\Gamma(Q))_J,$$ 
$$ \widetilde{Q}_J=(\widetilde{Q})_J,\ 
 \overline{Q}_t=\left (\overline{Q} \right)_t,\ 
 \gamma(Q)_t=(\gamma(Q))_t, \text{ etc.} 
$$
It is cautioned that the closure $\overline{Q_t\,}$ of $Q_t$ is a subset of $\overline Q_t$ but is not necessarily equal to $\overline Q_t$, see Example \ref{egs}\ref{eg0} below.

The following remarks on Definition \ref{parabb} are in order.

\begin{enumerate}[label=\rnum]
\item Obviously, in the case $Q$ is an empty set, one has
$\gamma(\emptyset)=\emptyset$, $\Gamma(\emptyset)=\emptyset$ and $\widetilde \emptyset=\emptyset$.

\item It is clear from \eqref{CCpt} that
\beqs
\gamma(Q)\subset \partial Q\text{ which implies  }
\partial Q=\Gamma(Q)\cup \gamma(Q).
%\text{ and }\overline Q=\widetilde Q\cup \Gamma(Q).
\eeqs

\item\label{rc} Let $Q$ be an open, non-empty set in $\R^{n+1}$. 
Obviously, $Q$ does not intersects  $\partial Q$, hence,  $\gamma(Q)$ and $\Gamma(Q)$. 
Let $(x,t)\in \gamma(Q)$ and $h>0$ be as in Definition \ref{parabb}.
Let $y\in B_h(x)$ and set $\delta=h-|y-x|$. We have $\delta\in(0,h)$ and
\beqs
\mathcal C_{y,\delta}^{(t-\delta,t)}\subset \mathcal C_{x,h}^{(t-h,t)}\subset Q\text{ and }
\mathcal C_{y,\delta}^{(t,t+\delta)}\subset \mathcal C_{x,h}^{(t,t+h)}\subset \R^n\setminus Q.
\eeqs
Thus, $(y,t)\in \gamma(Q)$, and therefore,
\beq\label{Bhg}
B_h(x)\times\{t\}\subset \gamma(Q).
\eeq
As a consequence of \eqref{CCpt} and \eqref{Bhg}, we particularly have
\beqs
    \mathcal C_{x,h/2}^{(t-h/2,t]}=\mathcal C_{x,h/2}^{(t-h/2,t)}\cup (B_{h/2}(x)\times\{t\})\subset Q\cup \gamma(Q),
\eeqs
which yields    
    \beq \label{tilnb}
    \mathcal C_{x,h/2}^{(t-h/2,t]}\subset \widetilde Q.
    \eeq 
    Moreover, since   $\mathcal C_{x,h/2}^{(t,t+h/2)}\cap  Q=\emptyset$, thanks to the second condition in \eqref{CCpt}, and the set $\mathcal C_{x,h/2}^{(t,t+h/2)}$ is open, we have 
$\mathcal C_{x,h/2}^{(t,t+h/2)}\cap \overline Q=\emptyset$,
which gives
    \beq\label{cq1} 
 \mathcal C_{x,h/2}^{(t-h/2,t+h/2)}\cap \overline Q= C_{x,h/2}^{(t-h/2,t]}\cap \overline Q .
    \eeq 
Combining \eqref{cq1} with \eqref{tilnb}, we obtain
    \beq\label{QbQtil}
       \mathcal C_{x,h/2}^{(t-h/2,t+h/2)} \cap \overline Q
      \subset \widetilde Q \cap \overline Q= \widetilde Q.
    \eeq

    \item\label{re} Let $Q$ be an open, non-empty set in $\R^{n+1}$. Let $(x,t)$ be any point in  
    $\widetilde Q$.  In the case $(x,t)\in Q$, then, thanks to $Q$ being open, there is $h>0$ such that
    $\mathcal C_{x,h/2}^{(t-h/2,t+h/2)}
      \subset  Q $, thus,
    \beqs
     \mathcal C_{x,h/2}^{(t-h/2,t+h/2)}\cap \overline Q=\mathcal C_{x,h/2}^{(t-h/2,t+h/2)}
      \subset \widetilde Q.
    \eeqs
     In the case $(x,t)\in \gamma(Q)$, we have \eqref{QbQtil}.
    Therefore, $\widetilde Q$ is an open set in the metric space $\overline Q$. Consequently, $\Gamma(Q)=\overline Q\setminus \widetilde Q$ is a closed set in the metric space $\overline Q$. Since $\overline Q$ is closed in $\R^{n+1}$, we deduce 
    \beq\label{closed} \Gamma(Q) \text{ is closed.}
    \eeq 
    
\item\label{rd} In the case $Q$ is open and bounded, then, thanks to \eqref{closed}, the  set 
    \beq\label{Gcomp} 
    \Gamma(Q) \text{ is compact.}
    \eeq 
    If $J$ is a compact set in $\R$, then \eqref{Gcomp} implies
    \beq \label{GJcomp}
    \Gamma(Q)_J=\Gamma(Q)\cap (\R^n\times J) \text{ is compact.}
    \eeq 
\end{enumerate}

\begin{remark}\label{rmk1}
Let $Q$ be an open, non-empty  set in $\R^{n+1}$ and $u$ is a function from $\widetilde Q$ to $\R$. 
If $(x,t)\in\gamma(Q)$, then, thanks to \eqref{tilnb}, it is valid to define (whenever they exist) the spatial partial derivatives 
\beqs %\label{ddmean}
 \frac{\partial u}{\partial x_i}(x,t),\ 
 \frac{\partial^2 u}{\partial x_i\partial x_j}(x,t)
\eeqs 
while $\frac{\partial u}{\partial t}(x,t)$ must be understood as 
 the left-sided time derivative, i.e., 
 \beqs %\label{dtmean}
\frac{\partial u}{\partial t}(x,t)=\lim_{h\nearrow 0}\frac{u(x,t+h)-u(x,t)}{h}.
\eeqs 
(Clearly, the right-sided time derivative at $(x,t)$ cannot be defined in this case.)
\end{remark}

Thanks to \eqref{tilnb} and Remark \ref{rmk1}, we can have the following definition.

\begin{definition}\label{class21}
Given  an open, non-empty  set $Q$ in $\R^{n+1}$, let $\mathcal Q$ be $\widetilde Q$ or $\widetilde Q_{J}$, where $J\ne \emptyset$  is either an open subset of $\R$ or $J=(-\infty,t_2]$ or $J=(t_1,t_2]$.
Define $C_{x,t}^{2,1}(\mathcal Q)$ to be the collection of functions $u:\mathcal Q\to \R$ such that $u$, $u_{x_i}$, $u_{x_i x_j}$, for $i,j=1\ldots n$, and $u_t$ are continuous.     
\end{definition}

The main tool in analyzing non-divergence PDEs is the Maximum Principle which is established for bounded sets. However, we will study the solutions $u(x,t)$ as $t\to\infty$, that is, $(x,t)$ belongs to an unbounded set $Q$. 
Therefore, we will apply the Maximum Principle to bounded subsets $E$ of $Q$. Doing so, there is a need to understand  the relations between boundary points of $Q$ and $E$. For general sets in $\R^{n+1}$, these relations can be complicated as showed in the next example.

\begin{example}\label{egs}
We present some examples.
%The following examples show how complicated the general space-time sets can cause while applying the Maximum Pricinciple to restriction of a set.
\begin{enumerate}[label=\rnum]
\item\label{eg0} Let $Q=\{(x,t)\in \R^n\times\R:t\in\R,|x|<|t|\}$
or $Q=\{(x,t)\in \R^n\times(0,\infty):|x|<t\}$.
Then $Q_0=\emptyset$ but $\overline{Q}_0=\{0\}$. Thus, $\overline{Q_0\,}=\emptyset\ne \overline{Q}_0$.

    \item\label{ega}    Let $Q$ be the open set in the plane enclosed by the loop consisting of the line segments connecting the points $(0,0),(2,0),(2.2),(3,2),(3,3),(3,2),(1,1),(0,1),(0,0)$. \
    \begin{itemize}
        \item     Let $E=Q_{(1,2)}=(1,2)\times(1,2)$. Then we have
    \beqs   \overline{Q}_{1}=[0,2]\times\{1\},\ 
    \overline{E}_{1}=[1,2]\times\{1\}, \ 
    \overline{Q}_{2}=[1,3]\times\{2\},\ 
    \overline{E}_{2}=[1,2]\times\{2\}.
    \eeqs 
Obviously, $\overline{Q}_{1}\ne \overline{E}_{1}$ and 
    $\overline{Q}_{2}\ne \overline{E}_{2}$.
In fact, $\overline{Q}_{1}\setminus \overline{E}_{1}=[0,2)\times\{1\}$, its furthest left point $(0,1)$ belongs to $\Gamma(Q)$, but the rest, that is  $(0,2)\times\{1\}$, is a subset of $\gamma(Q)$.
Meanwhile,    
    \beq \label{QGex}
    \overline{Q}_{2}\setminus \overline{E}_{2}=(2,3]\times\{2\}\subset \Gamma(Q).
\eeq   
This observation \eqref{QGex} turns out to be true in general, see Lemma \ref{rmk2}\ref{pc}, which plays an important role in Theorem \ref{maxcom} below.

    \item Let $F=Q_{(0,1)}=(0,2)\times(0,1)$. Then the point $(1,1)$ belongs to $\gamma(F)\setminus \gamma(Q)$. Consequently, $\widetilde F$ is not a subset of $\widetilde Q$. This prompts us to impose serious conditions in \ref{asv} of Assumption \ref{GGamcond} below.
    \end{itemize}

\item \label{egb} 
 Let  $S_1=(1,2)\times\{0\}$ and
 $$Q=(-2,2)\times (-2,2)\setminus ([-1,1]\times[-1,1] \cup S_1).$$
 Let $E=Q_{(-2,0)}$. 
 Then $\gamma(E)=S_1\cup S_2\cup S_3$, 
 where  $S_2=(-2,-1)\times \{0\}$ and $S_3=(-1,1)\times\{-1\}$.
 Observe that 
\beq \label{SSS}
S_1\subset \Gamma(Q),\ 
S_2\subset Q\text{ and } 
S_3\subset \gamma(Q).
\eeq  
The first property in \eqref{SSS} shows that $\widetilde E$ is not a subset of $\widetilde Q$. 
  \end{enumerate}
\end{example}

The following are the main relations between a general set and its restrictions.

\begin{lemma}\label{rmk2}
Let $Q$ be an open, non-empty subset of $\R^{n+1}$.
    Let $t_1<t_2$ and $E=Q_{(t_1,t_2)}$. Then the following statements fold true.
\begin{enumerate}[label=\rnum]
    \item\label{pa} If $t\in(t_1,t_2)$, then
    \beq\label{qe1}
    Q_t=E_t, \ 
    \overline{Q}_t=\overline{E}_t,\  (\partial Q)_t=(\partial E)_t,
    \eeq
    \beq\label{qe2}
    \gamma(Q)_t=\gamma(E)_t
    \text{ and }     \Gamma(Q)_t=\Gamma(E)_t  .
    \eeq

    \item \label{pb} $\overline{E}_{t_1}=\Gamma(E)_{t_1}$, 
    $\gamma(E)_{t_1}=\emptyset$,
    $\gamma(Q)_{t_2}\subset \gamma(E)_{t_2}$ and 
    $\Gamma(E)_{t_2}\subset \Gamma(Q)_{t_2}$.
Consequently,
    \beq\label{gEQ} 
\widetilde E=\widetilde E_{(t_1,t_2]}\text{ and }
\gamma(E)=\gamma(Q)_{(t_1,t_2)}\cup \gamma(E)_{t_2}.
    \eeq 

\item\label{pc} 
$\overline{Q}_{t_2}\setminus \overline{E}_{t_2}\subset \Gamma(Q)$.

\item \label{pd} If $E$ is bounded and 
\beq \label{Qnone}
\overline{Q}_t\ne\emptyset\text{ for all }t\in(t_1,t_2),
\eeq 
then
\beq \label{EEne}
\overline{E}_{t_1}\ne \emptyset\text{ and }\overline{E}_{t_2}\ne \emptyset.
\eeq 
\end{enumerate}
\end{lemma}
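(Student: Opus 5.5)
The proof is elementary point-set topology. The single structural fact used throughout is
\[
E=Q\cap(\R^n\times(t_1,t_2)),
\]
where $\R^n\times(t_1,t_2)$ is an open slab. So near any point whose time coordinate lies strictly between $t_1$ and $t_2$, the sets $E$ and $Q$ agree on a small open cylinder. Near times $t_1$ and $t_2$, $E$ is cut off sharply.

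\textbf{Part (a).} Fix $t\in(t_1,t_2)$ and choose $h>0$ with $(t-2h,t+2h)\subset(t_1,t_2)$. For any $x\in\R^n$ and any $r\le h$, the cylinder $\mathcal C^{(t-r,t+r)}_{x,r}$ meets $E$ exactly where it meets $Q$. Hence $Q_t=E_t$, and $\overline Q_t=\overline E_t$ by the sequential definition of closure. For the boundaries, $x$ is interior to $Q$ iff it is interior to $E$ at such points, so $(\partial Q)_t=(\partial E)_t$. The condition \eqref{CCpt} for $\gamma$ involves only cylinders of radius $h'$, which we may shrink below $h$; therefore $\gamma(Q)_t=\gamma(E)_t$. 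Finally $\Gamma=\partial\setminus\gamma$ gives $\Gamma(Q)_t=\Gamma(E)_t$.

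\textbf{Part (b).} At time $t_1$, no point $(x,t_1)$ can satisfy the first inclusion in \eqref{CCpt} for $E$, because $E$ has no points with time $<t_1$. So $\gamma(E)_{t_1}=\emptyset$. Also $E_{t_1}=\emptyset$, so $\overline E_{t_1}\subset\partial E$, and therefore $\overline E_{t_1}=\Gamma(E)_{t_1}$.

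At time $t_2$, take $(x,t_2)\in\gamma(Q)$ with its $h$. The lower cylinder may be shrunk so that it lies in $(t_1,t_2)$, which places it in $E$. The upper cylinder has times $>t_2$ and so lies outside $E$. Hence $(x,t_2)\in\gamma(E)$. For the inclusion $\Gamma(E)_{t_2}\subset\Gamma(Q)_{t_2}$: a point of $\partial E$ at time $t_2$ is a limit of points of $Q$, so it lies in $\overline Q$. It is not in $Q$, since otherwise a small cylinder around it lies in $Q$, its lower half in $E$, its upper half outside $E$, making it a point of $\gamma(E)$. It is also not in $\gamma(Q)$ by the previous inclusion. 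Hence it lies in $\partial Q\setminus\gamma(Q)=\Gamma(Q)$.

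The identities \eqref{gEQ} then follow. The set $\gamma(E)$ lives in $[t_1,t_2]$ because $E$ does. Its time-$t_1$ slice is empty, its slices at times in $(t_1,t_2)$ are given by (a), and $E$ itself is contained in the open time range.

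\textbf{Part (c).} Let $(x,t_2)\in\overline Q\setminus\overline E$. Then some cylinder $\mathcal C^{(t_2-r,t_2+r)}_{x,r}$ misses $E$, so its lower half misses $Q$. If $(x,t_2)$ were in $Q$, openness of $Q$ would put points of $Q$ in that lower half, a contradiction. If it were in $\gamma(Q)$, the lower cylinder in \eqref{CCpt} would lie in $Q$ and meet this lower half, again a contradiction. Hence $(x,t_2)\in\overline Q\setminus\widetilde Q=\Gamma(Q)$.

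\textbf{Part (d).} This part is the main obstacle, because it needs compactness. Pick $s_k\searrow t_1$ with $s_k\in(t_1,t_2)$ and points $(x_k,s_k)\in\overline Q_{s_k}=\overline E_{s_k}$, using (a). These points lie in $\overline E$, which is compact because $E$ is bounded. A subsequence converges to some $(x,t_1)\in\overline E$, so $\overline E_{t_1}\ne\emptyset$. The same argument with $s_k\nearrow t_2$ gives $\overline E_{t_2}\ne\emptyset$. The only care needed is to invoke (a) so that nonemptiness of $\overline Q_{s_k}$ transfers to $\overline E_{s_k}$, and to use boundedness of $E$ for compactness.
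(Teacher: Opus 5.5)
Your proposal is correct and follows essentially the same route as the paper. For (a) you localize to a thin slab around $t$ where $E$ and $Q$ coincide, for (b) and (c) you use the one-sided cutoff of $E$ at $t_1$ and $t_2$, and for (d) you use compactness of $\overline{E}$. Your local deviations are valid: you show a point of $\Gamma(E)_{t_2}$ is not in $Q$ because it would otherwise lie in $\gamma(E)$, you prove (c) directly from a cylinder missing $E$ (implicitly shrunk so that $r\le t_2-t_1$) instead of invoking (b), and in (d) you pick points in $\overline{E}_{s_k}$ via (a) rather than approximating by points of $Q$.
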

\begin{proof}%[Proof of Lemma \ref{rmk2}]
   Observe that 
   \beq\label{Ett}
   E_{(t'_1,t'_2)}=Q_{(t'_1,t'_2)} \text{ for all }t'_1, t'_2\in\R \text{ with }t_1\le t'_1<t'_2\le t_2.
   \eeq

\medskip\noindent\ref{pa} Let $t\in(t_1,t_2)$. Select $h>0$ sufficiently small such that 
\beq \label{tth}
t_1<t-h<t+h<t_2.
\eeq 
Hence, thanks to \eqref{Ett},
   \beq\label{Eth}
   E_{(t-h,t+h)}=Q_{(t-h,t+h)}.
   \eeq
With property \eqref{Eth}, we obtain \eqref{qe1}. 

  We prove \eqref{qe2} next. For any small $h>0$ satisfying \eqref{tth}, one has, thanks to \eqref{Eth}, 
  \beq\label{CQE}
  \mathcal C^{(t-h,t)}_{x,h}\cap Q=\mathcal C^{(t-h,t)}_{x,h}\cap E \text{ and } \mathcal C^{(t,t+h)}_{x,h}\cap Q=\mathcal C^{(t,t+h)}_{x,h}\cap E.
  \eeq 
We rewrite the condition \eqref{CCpt} in Definition \ref{parabb} in the following equivalent form
\beq\label{gequiv}
\mathcal C^{(t-h,t)}_{x,h}\cap Q=\mathcal C^{(t-h,t)}_{x,h}\text{ and }\mathcal C^{(t,t+h)}_{x,h}\cap Q=\emptyset.
\eeq
Thanks to \eqref{CQE}, the condition \eqref{gequiv} is equivalent to 
\beqs%\label{gequiv}
\mathcal C^{(t-h,t)}_{x,h}\cap E=\mathcal C^{(t-h,t)}_{x,h}\text{ and }\mathcal C^{(t,t+h)}_{x,h}\cap E=\emptyset.
\eeqs
Therefore, $(x,t)\in\gamma(Q)$ is equivalent to $(x,t)\in\gamma(E)$. This proves the first property in \eqref{qe2}.

Now, we have from the last property in  \eqref{qe1} and the first  property in \eqref{qe2} that
\beqs
\Gamma(Q)_t
= [\partial Q\setminus \gamma(Q)]_t
= (\partial Q)_t\setminus \gamma(Q)_t
=(\partial E)_t\setminus \gamma(E)_t
= [\partial E\setminus \gamma(E)]_t
=\Gamma(E)_t.
\eeqs 
Thus, we obtain the second  property in \eqref{qe2}. 

\medskip\noindent\ref{pb}
The proof is laid out in five steps below.

\medskip\noindent\emph{Step 1: Proof of $\overline{E}_{t_1}=\Gamma(E)_{t_1}$.}    
 Obviously, $\Gamma(E)_{t_1}\subset \overline{E}_{t_1}$. 
For the reverse inclusion, let $(x,t_1)\in \overline{E}_{t_1}$.
For $k\in\N$, take $\tau_k=t_1-1/k$. Clearly, $\tau_k<t_1$. Then
\beq\label{xtau0}
(x,\tau_k)\not\in E\text{ and }(x,\tau_k)\to(x,t_1)\text{  as }k\to\infty.
\eeq
This and the facts $(x,t_1)\in \overline{E}$ and $E$ is open imply $(x,t_1)\in\partial E$. 
Property \eqref{xtau0} also shows that the first requirement in \eqref{CCpt} fails for $t:=t_1$ and $Q:=E$, hence, $(x,t_1)$ does not belong to $\gamma(E)$. 
Thus, $(x,t_1)\in \partial E\setminus \gamma(E)=\Gamma(E)$.
Therefore, $\overline{E}_{t_1} \subset \Gamma(E)$ which implies $\overline{E}_{t_1} \subset \Gamma(E)_{t_1}$.

\medskip\noindent\emph{Step 2: Proof of $\gamma(E)_{t_1}=\emptyset$.}
Note that 
\beqs 
 \gamma(E)_{t_1}=(\partial E)_{t_1}\setminus \Gamma(E)_{t_1}\subset \overline{E}_{t_1}\setminus \Gamma(E)_{t_1}.
 \eeqs 
Then, thanks to Step 1, the last difference set is empty, thus, we obtain $\gamma(E)_{t_1}=\emptyset$.

\medskip\noindent\emph{Step 3: Proof of $\gamma(Q)_{t_2}\subset \gamma(E)_{t_2}$.}
Let $(x,t_2)\in \gamma(Q)$.
Choose $h>0$ small such that $t_2-h>t_1$ and \eqref{CCpt} holds for $t=t_2$, that is,
  \beq\label{CQE0}
  \mathcal C^{(t_2-h,t_2)}_{x,h}\cap Q=\mathcal C^{(t_2-h,t_2)}_{x,h} \text{ and }  \mathcal C^{(t_2,t_2+h)}_{x,h}\cap Q=\emptyset.
  \eeq
On the one hand, we have,  thanks  to \eqref{Ett}, $E_{(t_2-h,t_2)}=Q_{(t_2-h,t_2)}$, and hence, together with the first part of \eqref{CQE0},
  \beq\label{CQE1}
  \mathcal C^{(t_2-h,t_2)}_{x,h}\cap E=\mathcal C^{(t_2-h,t_2)}_{x,h}\cap Q=\mathcal C^{(t_2-h,t_2)}_{x,h}.
  \eeq 
  On the other hand, the fact $E\subset Q$ and the second part of \eqref{CQE0} imply
    \beq\label{CQE2}
 \mathcal C^{(t_2,t_2+h)}_{x,h}\cap E\subset \mathcal C^{(t_2,t_2+h)}_{x,h}\cap Q=\emptyset.
  \eeq
Then it follows from \eqref{CQE1} and \eqref{CQE2} that $(x,t_2)\in \gamma(E)$.

\medskip\noindent\emph{Step 4: Proof of $\Gamma(E)_{t_2}\subset \Gamma(Q)_{t_2}$.}
Consider $(x,t_2)\in \Gamma(E)_{t_2}$.
Clearly,
\beq\label{xQbar}
(x,t_2)\in \Gamma(E)\subset \overline E\subset \overline Q.
\eeq
Observe, for any $h>0$, that
\beq\label{CC2}
\mathcal C^{(t_2,t_2+h)}_{x,h}
\subset \R^{n+1}\setminus E.
\eeq
Because $(x,t_2)\not\in \gamma(E)$, then as a consequence of Definition \ref{parabb} and  \eqref{CC2}, 
there is a sequence  $(h_k)_{k=1}^\infty$ of positive numbers, with $h_k\to 0$ as $k\to\infty$, such that  the first requirement in \eqref{CCpt} fails for $t:=t_2$, $h=h_k$ and $Q:=E$, that is 
$\mathcal C^{(t_2-h_k,t_2)}_{x,h_k}$ is not a subset of $E$. Thus, for each $k\ge 1$, there exists a point $(x_k,\tau_k)\in \mathcal C^{(t_2-h_k,t_2)}_{x,h_k}$  such that  
\beq\label{xtauk} 
(x_k,\tau_k)\not \in E.
\eeq 
For large $k$, we have $\tau_k\in (t_2-h_k,t_2)\subset (t_1,t_2)$, hence, by the first property in \eqref{qe1}, $E_{\tau_k}=Q_{\tau_k}$.
This fact and \eqref{xtauk} imply $(x_k,\tau_k)\not \in Q$.
Clearly, $(x_k,\tau_k)\to (x,t_2)$ as $k\to\infty$. Together with $Q$ being open, the last two properties of $(x_k,\tau_k)$ imply $(x,t_2)\not\in Q$. Combining this with the fact $(x,t_2)\in\overline Q$  from \eqref{xQbar}, we obtain $(x,t_2)\in \partial Q$.
If $(x,t_2)\in \gamma(Q)$, then, by the virtue of the result proved in Step 3 above, we must have $(x,t_2)\in\gamma(E)$ which contradicts the fact $(x,t_2)\in \Gamma(E)$. Therefore, we have $(x,t_2)\in \partial Q\setminus \gamma(Q)=\Gamma(Q)$. Thus, $(x,t_2)\in \Gamma(Q)_{t_2}$.

\medskip\noindent\emph{Step 5: Proof of \eqref{gEQ}.}
Since $\gamma(E)\subset \widetilde E\subset\overline{E}\subset \R^n\times[t_1,t_2]$, we have 
  \beq \label{EtgE}
  \widetilde E=\widetilde E_{[t_1,t_2]}=\widetilde E_{t_1}\cup \widetilde E_{(t_1,t_2]}
  \text{ and }
    \gamma(E)=\gamma(E)_{[t_1,t_2]}=\gamma(E)_{t_1}\cup \gamma(E)_{(t_1,t_2]}.
  \eeq 
Clearly, $E_{t_1}=\emptyset$. Together with Step 2, it implies
$\widetilde E_{t_1}=E_{t_1}\cup \gamma(E)_{t_1}=\emptyset$.
Therefore, $\widetilde E=\widetilde E_{(t_1,t_2]}$ which proves the first property in \eqref{gEQ}.
 Thanks to $\gamma(E)_{t_1} =\emptyset$ from Step 2 again,  it follows from the second part of \eqref{EtgE} that 
    \beq\label{gE} 
    \gamma(E)=\gamma(E)_{(t_1,t_2]}=\gamma(E)_{(t_1,t_2)}\cup \gamma(E)_{t_2}.
    \eeq 
By \eqref{qe2}, one has
\beq\label{gEint} 
\gamma(E)_{(t_1,t_2)}=\gamma(Q)_{(t_1,t_2)}.
\eeq
Then the second property of \eqref{gEQ} follows from \eqref{gE} and \eqref{gEint}.

\medskip\noindent\ref{pc}  Suppose the statement is false.
Then there is 
\beq\label{xt2}
(x_*,t_2)\in (\overline{Q}_{t_2}\setminus \overline{E}_{t_2})\setminus \Gamma(Q) =\overline{Q}_{t_2}\setminus (\overline{E}_{t_2}\cup \Gamma(Q)) .
\eeq
This implies $(x_*,t_2)\in \overline Q\setminus \Gamma(Q) =\widetilde Q=Q\cup \gamma(Q)$.
\begin{itemize}
    \item Consider $(x_*,t_2)\in Q$. Because $Q$ is open, there is a sequence $(x_*,\tau_k)\in Q$, for $k\in\N$,  with $t_1<\tau_k<t_2$ that converges to $(x_*,t_2)$ as $k\to\infty$ . Obviously,  $(x_*,\tau_k)\in E$, hence $(x_*,t_2)\in \overline{E}$. Consequently, $(x_*,t_2)\in \overline{E}_{t_2}$ which contradicts  \eqref{xt2}.
    \item Consider $(x_*,t_2)\in \gamma(Q)$. Then by the virtue of the third property in part \ref{pb}, one must have $(x_*,t_2)\in \gamma(E)\subset \overline E.$ This implies $(x_*,t_2)\in \overline{E}_{t_2}$ which contradicts \eqref{xt2} again.
\end{itemize}
Therefore, we must have  $\overline{Q}_{t_2}\setminus \overline{E}_{t_2}\subset \Gamma(Q)$. 

\medskip\noindent\ref{pd}
We prove  the part $\overline{E}_{t_1}\ne\emptyset$ in \eqref{EEne} first. Take a sequence $(h_k)_{k=1}^\infty$ in the interval $(0,t_2-t_1)$ that converges to $0$ as $k\to\infty$.
By the assumption \eqref{Qnone}, there exists a point  $(x_k',t_1+h_k/2)\in \overline Q$  for each $k\ge 1$.
Consequently, there is $(x_k,\tau_k)\in Q$ sufficiently close to $(x_k',t_1+h_k/2)$ so that 
\beq \label{taulim}
\tau_k\in(t_1,t_1+h_k).
\eeq 
Clearly, $\tau_k\in(t_1,t_2)$ which yields $(x_k,\tau_k)\in E$. 
By the compactness of $\overline E$, there is a subsequence $(x_{k_j},\tau_{k_j})$ of $(x_k,\tau_k)$ that converges to a point $(x_*,t_*)$ as $j\to\infty$. Then $(x_*,t_*)\in \overline{E}$ and $t_*=\lim_{j\to\infty}\tau_{k_j}=t_1$ thanks to \eqref{taulim}. These  imply $(x_*,t_*)\in \overline{E}_{t_1}$. Therefore, the set $\overline{E}_{t_1}$ is not empty.

The proof for the part $\overline{E}_{t_2}\ne\emptyset$ in \eqref{EEne} is similar.
\end{proof}

The following type of set will play an important role in the current work.

\begin{definition}\label{IG}
    Define the set %$\mathcal I(Q)=\mathbb P_2(\overline{Q})$ and 
    \beq\label{mGQ}
    \mathcal G(Q)=\left \{(t_1,t_2)\in \mathbb P_2(\overline{Q})\times \mathbb P_2(\overline{Q}): t_1<t_2, \left[\gamma(Q_{(t_1,t_2)})\right]_{t_2}\subset \widetilde Q \right \}.
    \eeq
\end{definition}

Suppose $(t_1,t_2)\in \mathcal G(Q)$ and denote $E=Q_{(t_1,t_2)}$.
We have from \eqref{mGQ} that $\gamma(E)_{t_2}\subset \widetilde Q_{t_2}$.
Combining this fact with the second property in \eqref{gEQ} gives  
\beq\label{gamEQ}
\gamma(E)\subset \gamma(Q)_{(t_1,t_2)}\cup \widetilde Q_{t_2}\subset \widetilde Q_{(t_1,t_2]}.
\eeq
Together with $\widetilde E=E\cup\gamma(E)=Q_{(t_1,t_2)}\cup\gamma(E)$, \eqref{gamEQ} implies
     \beq\label{tilEQ}
         \widetilde E\subset  \widetilde Q_{(t_1,t_2]}\text{ and, consequently, }
    \widetilde E\subset \widetilde Q.
    \eeq
With \eqref{tilEQ}, an equation which holds in $\widetilde Q$ will also hold in $\widetilde E$.
    
\begin{assumption}\label{GGamcond}
Let $Q$ be a subset of $\R^{n+1}$ and denote $I=\mathbb P_2(\overline{Q})$. Assume the following.
\begin{enumerate}[label=\tnum]
    \item \label{as0} $Q$ is non-empty and open.
    \item\label{asi} The set $I$ is connected.
    \item\label{asii}  For any $t_1,t_2\in I$ with $t_1<t_2$,  the set $Q_{(t_1,t_2)}$ is bounded. 

    \item\label{asv} For any $t_1,t_2\in I$ with $t_1<t_2$, either 
    \begin{enumerate}[label=\rnum]
        \item\label{asa}  $(t_1,t_2)\in \mathcal G(Q)$, or 
        \item\label{asb} there is a sequence $(\tau_k)_{k=1}^\infty\subset (t_1,t_2)$ converging to $t_2$ such that $(t_1,\tau_k)\in \mathcal G(Q)$ for all $k$ and, with the notation $E=Q_{(t_1,t_2)}$,  
    \beq\label{Hsdk}
    \lim_{k\to\infty}\left (\sup_{X\in \overline{E}_{t_2}} {\rm dist}(X,\overline{E}_{\tau_k})\right)=0.
    \eeq
    \end{enumerate}
\end{enumerate}
\end{assumption}

The following remarks on Assumption \ref{GGamcond} are in order.
\begin{enumerate}[label=\rnum]
    \item  Since $Q$ is open and  non-empty, the set $I$ is neither empty nor a single point.
Consequently, the interior set $I^0$ is not empty.

\item Let $t_1$, $t_2$ be as in \ref{asii}. Thanks to the condition \ref{asi}, 
\beq \label{ttnone}
\text{the set $Q_{(t_1,t_2)}$ is not empty.}
\eeq 
Indeed, let $t'\in(t_1,t_2)$, then  \ref{asi} implies $t'\in I$ and, hence, there is a point $(x',t')$ in $\overline{Q}$.
By selecting $(x,t)\in Q$ sufficiently close to $(x',t')$, we have $t\in(t_1,t_2)$, and, therefore,  $(x,t)\in Q_{(t_1,t_2)}$.
\end{enumerate}

\begin{lemma}\label{moreE}
If $Q$  satisfies Assumption \ref{GGamcond}\ref{as0}--\ref{asii} and $t_1,t_2\in \mathbb P_2(\overline Q)$ with $t_1\le t_2$, then
\beq \label{OKbQ}
\text{$\overline{Q}_{[t_1,t_2]}$ is bounded.}
\eeq 
\end{lemma}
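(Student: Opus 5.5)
The idea is to enlarge the time window slightly so that Assumption \ref{GGamcond}\ref{asii} applies to an open window, and then to pass to closures. The enlargement must stay inside $I=\mathbb P_2(\overline Q)$, so the argument splits according to whether $t_1$ and $t_2$ are interior or endpoint values of $I$.

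\emph{Case 1: $t_1$ and $t_2$ are interior points of $I$ (relative to $\R$).} Choose $s_1<t_1$ and $s_2>t_2$ with $s_1,s_2\in I$. By Assumption \ref{GGamcond}\ref{asii}, the set $E=Q_{(s_1,s_2)}$ is bounded, so $\overline E$ is compact. Now take $(x,t)\in\overline Q_{[t_1,t_2]}$ and a sequence $(x_k,\tau_k)\in Q$ converging to it. Since $t\in[t_1,t_2]\subset(s_1,s_2)$, we have $\tau_k\in(s_1,s_2)$ for large $k$, so $(x_k,\tau_k)\in E$ and hence $(x,t)\in\overline E$. Thus $\overline Q_{[t_1,t_2]}\subset\overline E$, which is bounded.

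\emph{Case 2: $t_1=\min I$ or $t_2=\max I$.} Suppose for instance $t_1=\min I$. Then no point of $Q$ has time below $t_1$. The set $Q$ is open, so every $(x,\tau)\in Q$ has a time neighbourhood inside $Q$; since $\overline Q$ has no points with time below $\min I$, every point of $Q$ satisfies $\tau>t_1$. The same reasoning at the top gives $\tau<t_2$ whenever $t_2=\max I$. Hence the approximating points $(x_k,\tau_k)\in Q$ from Case 1 have $\tau_k\in(s_1,s_2)$, where we take $s_1=t_1$ if $t_1=\min I$ and $s_2=t_2$ if $t_2=\max I$; otherwise we enlarge as in Case 1, which is possible because $I$ is connected by Assumption \ref{GGamcond}\ref{asi}.

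There is one remaining subtlety: when $t_1=t_2$ and this value is an endpoint, possibly both. If $t_1=t_2=\min I=\max I$, then $I$ is a single point, which is impossible because $Q$ is open and non-empty (see the remark after Assumption \ref{GGamcond}). So $s_1<s_2$ always holds, with $s_1,s_2\in I$ and $(s_1,s_2)\supset\{\tau_k\}$ for large $k$. Moreover, $s_1<s_2$ can always be chosen in $I$ with $[t_1,t_2]\cap I^0$ handled. Then $\overline Q_{[t_1,t_2]}\subset\overline{Q_{(s_1,s_2)}}$, and this set is bounded by Assumption \ref{GGamcond}\ref{asii}.

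The main obstacle is this bookkeeping of endpoint cases, namely checking that the enlarged interval can always be taken inside $I$ with $s_1<s_2$, and that points of $Q$ never lie at an endpoint time of $I$. The latter follows from the openness of $Q$ together with the definition of $I$. Everything else is the closure-of-sequence argument above.
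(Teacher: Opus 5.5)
Your proof is correct and is essentially the paper's argument. The paper first bounds each slice $\overline{Q}_t$, using a two-sided window $Q_{(t-h,t+h)}$ at interior times and a one-sided window $Q_{(t,t_0)}$ at $t=\min I$; the latter is exactly your Case 2, since points of the open set $Q$ cannot have time $\min I$. It then adds $\overline{Q}_{(t_1,t_2)}\subset\overline{Q_{(t_1,t_2)}}$, whereas you cover all of $\overline{Q}_{[t_1,t_2]}$ at once by a single window $\overline{Q_{(s_1,s_2)}}$. Your one-window version is slightly more economical: phrased as ``either $t_1=\min I$ or some $s_1\in I$ lies below $t_1$,'' it does not even use connectedness of $I$. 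The sentence about ``$[t_1,t_2]\cap I^0$ handled'' is garbled and should simply be deleted.
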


We will prove Lemma \ref{moreE} in Appendix \ref{apA}.
Below are some typical cases of sets that satisfy Assumption \ref{GGamcond}. 
    \textit{For the rest of this section, consider $J=\R$ or $J=(t_*,\infty)$ for some $t_*\in\R$.}
We start with a funnel centered about a curvy axis.

\begin{proposition}\label{gset}
Let $X:\overline{J}\to\R^n$ and $R:\overline{J}\to [0,\infty)$ be continuous functions. Assume \beq \label{zeroc}
\text{the set $R^{-1}(\{0\})$ is either empty or consists of only isolated points.}
\eeq 
Define the set
\beq\label{Qeg}
    Q=\{(x,t)\in \R^{n}\times J:|x-X(t)|<R(t)\}.
\eeq 
Then  $Q$ satisfies Assumption \ref{GGamcond} and $\partial Q=\Gamma(Q)$. 
\end{proposition}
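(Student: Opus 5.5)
We verify the four conditions of Assumption \ref{GGamcond}, prove $\gamma(Q)=\emptyset$ (so $\partial Q=\Gamma(Q)$), and then check \ref{asv} via option \ref{asa}.

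\emph{Conditions \ref{as0}--\ref{asii}.} Openness of $Q$ follows because $(x,t)\mapsto R(t)-|x-X(t)|$ is continuous on $\R^n\times J$ and $J$ is open. Non-emptiness holds because, by \eqref{zeroc}, $R$ is positive somewhere, and at such $t$ the point $(X(t),t)$ lies in $Q$. For \ref{asi}, I will show $I=\mathbb P_2(\overline Q)=\overline J$. Clearly $\mathbb P_2(Q)=J$, so $I\subset \overline J$. Conversely, for $t\in\overline J$ the point $(X(t),t)$ is a limit of $(X(s),s)$ with $s\to t$, $s\in J$ and $R(s)>0$; such $s$ exist because the zeros of $R$ are isolated. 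Hence $(X(t),t)\in\overline Q$, so $I=\overline J$, which is an interval and thus connected. For \ref{asii}, $Q_{(t_1,t_2)}$ lies in $\{|x|\le \max_{[t_1,t_2]}(|X|+R)\}\times(t_1,t_2)$, and this is bounded by continuity on the compact interval $[t_1,t_2]$.

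\emph{$\gamma(Q)=\emptyset$.} Suppose $(x,t)\in\gamma(Q)$ with $h$ as in \eqref{CCpt}. Since $\gamma(Q)\subset\partial Q\subset\overline Q$, we have $t\in\overline J$. We may assume $t\in J$: if $t=t_*$, then the cylinder $\mathcal C^{(t-h,t)}_{x,h}$ lies below $J$ and so cannot be contained in $Q$. From the first inclusion, $|y-X(s)|<R(s)$ for all $y\in B_h(x)$ and $s\in(t-h,t)$. Letting $s\nearrow t$ and using continuity gives $|y-X(t)|\le R(t)$ on $B_h(x)$, so $R(t)\ge h/2>0$ and $|x-X(t)|<R(t)$ (an interior point of a ball contained in a closed ball cannot lie on its sphere). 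Thus $(x,t)\in Q$. Because $Q$ is open, points $(x,s)$ with $s$ slightly larger than $t$ also lie in $Q$, which contradicts the second inclusion in \eqref{CCpt}. Hence $\gamma(Q)=\emptyset$ and $\partial Q=\Gamma(Q)$.

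\emph{Condition \ref{asv}.} I aim for \ref{asa} for every pair $t_1<t_2$ in $I$. Set $E=Q_{(t_1,t_2)}$; we need $\gamma(E)_{t_2}\subset\widetilde Q$.

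Let $(x,t_2)\in\gamma(E)$ with radius $h$. As in the previous step, the first inclusion in \eqref{CCpt} forces $t_2\in J$, $R(t_2)>0$ and $|x-X(t_2)|<R(t_2)$, so $(x,t_2)\in Q\subset\widetilde Q$. (Indeed $t_2-h\ge t_1\ge t_*$ automatically, so $t_2>t_*$ and $t_2\in J$.)

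This is the step I expect to be the subtlest. The closure argument must produce strict inequality, which is why the ball argument is needed. It also requires $t_2\in J$, which is exactly where the case $t_2=t_*$ must be excluded. Note that \eqref{zeroc} is used only to obtain $I=\overline J$, and it is essential there: if $R$ vanished on an interval, $I$ could still be connected, but the pieces of $Q$ could degenerate.
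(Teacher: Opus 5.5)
Your proof is correct. It rests on a different key fact than the paper's proof in Appendix \ref{apB}.

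\textbf{The paper's route.} The paper first identifies $\overline Q$ as the closed funnel $\{(x,t): t\in\overline J,\ |x-X(t)|\le R(t)\}$. It reads off $\partial Q$ as the initial slice together with the lateral surface $|x-X(t)|=R(t)$. For each lateral point it places a point $y$ on the ray from $X(t)$ through $x$, at distance $h/2$ beyond $x$, so that $(y,\tau)\notin Q$ for all $\tau<t$ near $t$. This destroys the lower-cylinder condition in \eqref{CCpt}, which gives $\gamma(Q)=\emptyset$. The same construction is reused to show $\gamma(E)_{t_2}\subset \overline Q_{t_2}\setminus\Gamma(Q)_{t_2}=Q_{t_2}$.

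\textbf{Your route.} You prove once that $\mathcal C^{(t-h,t)}_{x,h}\subset Q$ forces $(x,t)\in Q$, using left-continuity and the open-ball-inside-closed-ball argument. Both conclusions then follow at once:
\begin{itemize}
\item $\gamma(Q)=\emptyset$, because $\gamma(Q)\subset\partial Q$ misses the open set $Q$;
\item $\gamma(E)_{t_2}\subset Q_{t_2}$, because $E\subset Q$.
\end{itemize}

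\textbf{Comparison.} Your route is shorter and never needs the closure of $Q$. The paper's route gives $\overline Q_t$ and $\partial Q$ explicitly, and the later examples use this when they compute $\mathbb P_1(\overline Q_\tau)$. It is also the template that carries over to Propositions \ref{gset1} and \ref{gset2}, where your key fact fails. If $R$ jumps down at $t$, the lower cylinder only gives $|x-X(t)|\le R(t-)$. Points with $R(t)<|x-X(t)|<R(t-)$ then genuinely belong to $\gamma(Q)$.

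\textbf{Two small slips, neither affecting the proof.}
\begin{itemize}
\item The claim $\mathbb P_2(Q)=J$ is false when $R$ has zeros in $J$, since there $Q_t=\emptyset$. You only need $\mathbb P_2(Q)\subset J$, which is true.
\item Your closing remark on \eqref{zeroc} is off. The condition is also what makes $Q$ non-empty. Moreover, if $R$ vanished on a subinterval $[a,b]$ of $J$ with $R>0$ on both sides, then $I$ would miss $(a,b)$ and would be disconnected, not connected.
\end{itemize}
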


The proof of Proposition \ref{gset} will be presented in Appendix \ref{apB}.
Next, we generalize Proposition \ref{gset} to allow the function $R$ to be discontinuous.

For a function $R:\overline{J}\to \R$, define  for $t\in \overline{J}$, 
\beqs
R(t-)=\lim_{\tau\nearrow t}R(\tau),\quad R(t+)=\lim_{\tau\searrow t}R(\tau),
\eeqs
\beq\label{RRpm}
 R_-(t)=\min\left\{R(t-),R(t+)\right\}\text{ and } 
  R_+(t)=\max\left\{R(t-),R(t+)\right\}.
\eeq
whenever the one-sided limits exist and are finite.

\begin{assumption}\label{Rco}
For a function $R:\overline{J}\to \R$, assume the following.
\begin{enumerate}[label=\rnum]
    \item\label{R1} The one-sided, resp. right-handed, limits of $R(t)$ exist and are finite at each $t$ in $J$, resp. $\partial J$.
    \item\label{R2} The set of discontinuity points  of $R$ is a subset of $J$ and  is either empty, or finite, or a sequence strictly increasing to infinity. 
\end{enumerate}
\end{assumption}

\begin{proposition}\label{gset1}
Let $X:\overline{J}\to\R^n$  be a continuous function. 
Let $R:\overline{J}\to [0,\infty)$ be a function that satisfies Assumption \ref{Rco}, condition \eqref{zeroc}, and 
\beq\label{Rmc}
 R(t)=R_-(t)\text{ for all }t\in\overline{J}.
\eeq
Then the set $Q$ defined by \eqref{Qeg} satisfies Assumption \ref{GGamcond}. 
\end{proposition}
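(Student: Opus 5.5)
We verify the four items of Assumption \ref{GGamcond} for $Q=\{(x,t)\in\R^n\times J:|x-X(t)|<R(t)\}$ in turn, exploiting that $R$ is piecewise continuous with locally finitely many jumps (Assumption \ref{Rco}\ref{R2}) and lower-semicontinuous in the sense of \eqref{Rmc}.

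\emph{Items \ref{as0}--\ref{asii}.} For openness, take $(x_0,t_0)\in Q$ with $t_0\in J$. Since $R(t_0)=R_-(t_0)\le R(t_0\pm)$, the one-sided limits give $\liminf_{t\to t_0}R(t)\ge R(t_0)$. By continuity of $X$ we have $|x-X(t)|<R(t)$ near $(x_0,t_0)$, so $Q$ is open. $Q$ is non-empty because $R^{-1}(\{0\})$ consists of isolated points. For boundedness of $Q_{(t_1,t_2)}$, note that $R$ is bounded on compact subintervals of $\overline J$: it has one-sided limits everywhere, and a compactness argument bounds it near each point. Together with boundedness of $X$ there, this gives \ref{asii}. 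Next, $I=\mathbb P_2(\overline Q)=\overline J$. The inclusion $\subset$ is clear. For $\supset$, if $R(t)>0$ then $(X(t),t)\in Q$; if $R(t)=0$, then $t$ is a limit of times $s$ with $R(s)>0$ (isolated zeros), so $(X(t),t)=\lim (X(s),s)\in\overline Q$. Hence $I$ is connected.

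\emph{Item \ref{asv}.} Fix $t_1<t_2$ in $I$, let $E=Q_{(t_1,t_2)}$, and describe $\overline{E}_{t_2}$ and $\gamma(E)_{t_2}$ explicitly. Since only finitely many discontinuities lie in $[t_1,t_2]$, there is $\delta>0$ such that $R$ and $X$ are continuous on $(t_2-\delta,t_2)$. Then $\overline{E}_{t_2}=\overline{B}_{R(t_2-)}(X(t_2))\times\{t_2\}$ (or the single point when $R(t_2-)=0$). Similarly, $\gamma(E)_{t_2}=B_{R(t_2-)}(X(t_2))\times\{t_2\}$: the cylinders below stay inside $E$ because $R$ is continuous from the left, and above $t_2$ the set $E$ is empty. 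We then split into cases.

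\emph{Case 1: $R(t_2-)\le R(t_2+)$ or $t_2$ is a continuity point.} Then \eqref{Rmc} gives $R(t_2)=R(t_2-)$, so $\gamma(E)_{t_2}\subset Q_{t_2}\subset\widetilde Q$ and \ref{asa} holds. If $t_2$ is the right endpoint situation, the same reasoning applies with $J$ open on the right.

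\emph{Case 2: $R(t_2-)>R(t_2+)=R(t_2)$.} Points of $\gamma(E)_{t_2}$ with $|x-X(t_2)|\ge R(t_2)$ need not lie in $\widetilde Q$. Some of them may be in $\gamma(Q)$, but points near the annulus boundary are a problem in general, so we aim for \ref{asb}. Choose $\tau_k\nearrow t_2$ inside $(t_2-\delta,t_2)$ that are continuity points of $R$ with $R(\tau_k)>0$. Each pair $(t_1,\tau_k)$ falls under Case 1, hence lies in $\mathcal G(Q)$. Moreover $\overline{E}_{\tau_k}=\overline B_{R(\tau_k)}(X(\tau_k))\times\{\tau_k\}$. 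The Hausdorff-type quantity in \eqref{Hsdk} is therefore bounded by $|X(t_2)-X(\tau_k)|+|R(t_2-)-R(\tau_k)|+|t_2-\tau_k|$, which tends to $0$ by continuity of $X$ and the existence of the left limit.

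The main obstacle is the careful identification of $\overline{E}_{t_2}$, $\gamma(E)_{t_2}$ and $\widetilde Q_{t_2}$, especially in the degenerate situations. One is when $R(t_2-)=0$, where $\gamma(E)_{t_2}$ is empty and \ref{asa} holds trivially. Another is when $t_1$ or $t_2$ is a zero of $R$ or coincides with a jump, where continuity of $X$ and local finiteness of jumps are what make the explicit closure formulas valid. I would handle these by reducing to the interval $(t_2-\delta,t_2)$ and invoking Lemma \ref{rmk2}, in the same manner as the proof of Proposition \ref{gset}.
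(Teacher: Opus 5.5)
Your proposal is correct and is essentially the paper's argument in Appendix~\ref{apC}: items \ref{as0}--\ref{asii} are checked in essentially the same way. For \ref{asv} you likewise get \ref{asa} whenever $R(t_2)=R(t_2-)$, and otherwise \ref{asb} via continuity points $\tau_k\nearrow t_2$ together with $X(\tau_k)\to X(t_2)$ and $R(\tau_k)\to R(t_2-)$; the only difference is that you settle an upward jump $R(t_2-)<R(t_2+)$ directly by \ref{asa}, whereas the paper routes every discontinuity at $t_2$ through \ref{asb}. There are two harmless slips: when $J=(t_*,\infty)$ the point $(X(t_*),t_*)$ lies in $\overline Q$, not in $Q$ (approximate it by $(X(s),s)$ with $s\searrow t_*$), and the inclusion Case~1 actually needs is $\gamma(E)_{t_2}\subset B_{R(t_2-)}(X(t_2))\times\{t_2\}$, which you should justify by letting $\tau\nearrow t_2$ in $B_h(x)\times\{\tau\}\subset E$ rather than by the cylinder-below argument you gave (that argument proves the reverse inclusion).
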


In the one-dimensional case, we describe similar sets without the axis $(X(t),t)$.

\begin{proposition}\label{gset2}
Let $R_1,R_2:\overline{J}\to \R$ be given functions  such that $R_1\le R_2$, the condition \eqref{zeroc} is met for $R:=R_2-R_1$,
and Assumption \ref{Rco} is satisfied with $R:=R_i$ for $i=1,2$. 
Assume, for all $t\in\overline{J}$,
\beq\label{RR12}
 R_1(t)=(R_{1})_+(t) \text{ and } R_2(t)=(R_{2})_-(t).
\eeq
Then the set 
$Q\eqdef \{(x,t)\in \R\times J:R_1(t)<x<R_2(t)\}$ 
 satisfies Assumption \ref{GGamcond}.
\end{proposition}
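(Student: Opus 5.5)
We check the four items of Assumption \ref{GGamcond} for $Q=\{(x,t)\in\R\times J:R_1(t)<x<R_2(t)\}$, along the lines of Propositions \ref{gset} and \ref{gset1}.

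\emph{Items \ref{as0}--\ref{asii}.} By Assumption \ref{Rco}\ref{R2}, on any bounded interval $R_1,R_2$ have finitely many discontinuities. Away from those times both functions are continuous, and the standard argument used for \eqref{Qeg} shows $Q$ is open there.

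At a discontinuity $t_0$ we use \eqref{RR12}: $R_1(t_0)=\max\{R_1(t_0-),R_1(t_0+)\}$ and $R_2(t_0)=\min\{R_2(t_0-),R_2(t_0+)\}$. So if $R_1(t_0)<x<R_2(t_0)$, then $R_1(\tau)<x<R_2(\tau)$ for $\tau$ near $t_0$, with a uniform spatial margin. Hence $Q$ is open.

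$Q$ is non-empty because $R_2-R_1$ is not identically zero, by \eqref{zeroc}. The projection $\mathbb P_2(\overline Q)$ contains $J\setminus R^{-1}(\{0\})$, where $R=R_2-R_1$. It also contains the isolated zeros of $R$ and the endpoint of $J$, since these are limits of such times; so it equals $\overline J$, which is connected.

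Boundedness of $Q_{(t_1,t_2)}$ follows because functions with one-sided limits everywhere and finitely many jumps on $[t_1,t_2]$ are bounded there.

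\emph{Item \ref{asv}, the main step.} Fix $t_1<t_2$ in $I$ and set $E=Q_{(t_1,t_2)}$. We first identify $\gamma(E)_{t_2}$.

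A point $(x,t_2)$ lies in $\gamma(E)$ exactly when, for some $h>0$, $R_1(\tau)<x-h$ and $x+h<R_2(\tau)$ for all $\tau\in(t_2-h,t_2)$. Thus
\[
\gamma(E)_{t_2}=\{(x,t_2):R_1(t_2-)<x<R_2(t_2-)\}.
\]
If $R_1(t_2-)\ge R_1(t_2+)$ and $R_2(t_2-)\le R_2(t_2+)$, then by \eqref{RR12} this interval is exactly the open cross-section $\{x:R_1(t_2)<x<R_2(t_2)\}$. Those points lie in $Q\subset\widetilde Q$, so alternative \ref{asa} holds. This covers every continuity point $t_2$.

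Otherwise $t_2$ is a discontinuity at which the previous cross-section is not contained in the next one. Since discontinuities are isolated, there is $\delta>0$ such that $R_1,R_2$ are continuous on $[t_2-\delta,t_2)$. Take any sequence $\tau_k\nearrow t_2$ in $(t_2-\delta,t_2)$ with $\tau_k>t_1$.

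\emph{Verifying $(t_1,\tau_k)\in\mathcal G(Q)$.} By continuity at $\tau_k$, the first case above applies, so $\gamma(Q_{(t_1,\tau_k)})_{\tau_k}\subset Q_{\tau_k}\subset\widetilde Q$. The fact $\tau_k\in\mathbb P_2(\overline Q)$ comes from connectedness of $I$.

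\emph{Verifying \eqref{Hsdk}.} This is the expected obstacle. We need to describe $\overline E_{t_2}$ and $\overline E_{\tau_k}$ as segments.

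Since $E$ only sees times $<t_2$ near $t_2$, we get $\overline E_{t_2}\subset[R_1(t_2-),R_2(t_2-)]\times\{t_2\}$. By continuity on $[t_2-\delta,t_2)$, $\overline E_{\tau_k}=[R_1(\tau_k),R_2(\tau_k)]\times\{\tau_k\}$. When $R_1(\tau_k)=R_2(\tau_k)$ this is a single point, which is still fine.

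Any $X=(x,t_2)$ with $x$ in the first segment is at distance at most
\[
|t_2-\tau_k|+|R_1(\tau_k)-R_1(t_2-)|+|R_2(\tau_k)-R_2(t_2-)|
\]
from the second segment, by clamping $x$ into $[R_1(\tau_k),R_2(\tau_k)]$. This bound tends to $0$ uniformly in $X$, which gives \eqref{Hsdk} and alternative \ref{asb}.

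The remaining care is at the lower end $t_1$ and at zeros of $R_2-R_1$. These only affect emptiness of cross-sections, not the inclusions above, and are handled as in Proposition \ref{gset1}.
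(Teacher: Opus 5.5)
Your proof is correct and follows essentially the paper's route. You get openness from \eqref{RR12}, local boundedness from the finitely many jumps with finite one-sided limits, and item (iv) from continuity points $\tau_k\nearrow t_2$ (which give $(t_1,\tau_k)\in\mathcal G(Q)$) together with convergence of the segments $\overline{E}_{\tau_k}$ to the left-limit segment at $t_2$. Your economies are valid: the explicit formula $\gamma(E)_{t_2}=\{(x,t_2):R_1(t_2-)<x<R_2(t_2-)\}$ gives alternative (a) directly when the cross-section does not shrink, and the inclusion $\overline{E}_{t_2}\subset[R_1(t_2-),R_2(t_2-)]\times\{t_2\}$ plus clamping replaces the paper's four-case identification of $\overline{E}_{t_2}$; the step from ``limits of such times'' to membership in $\mathbb P_2(\overline Q)$ needs only the local boundedness you establish right after.
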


The proofs of Propositions \ref{gset1} and \ref{gset2} will be given in Appendices \ref{apC} and \ref{apD}.
More information about the boundary points of $Q$ and of its particular subsets are showed in those proofs.
For example, for $Q$ in Proposition \ref{gset1}, one has, thanks to \eqref{gQ1},
\beqs 
\text{$\gamma(Q)=\emptyset$ if and only if $R(t-) \le  R(t+)$ for all $t\in J$.}
\eeqs 
Similarly, for $Q$ in Proposition \ref{gset2}, one has, thanks to \eqref{gQ2}, 
\beqs 
\text{$\gamma(Q)=\emptyset$ if and only if $ R_1(t_2-)\ge R_1(t_2+)$ and  $ R_2(t_2-)\le  R_2(t_2+)$ for all $t\in J$.}
\eeqs 
    
\section{Maximum Principle and Growth Lemma}\label{MaxGrowth}

Let $Q$ be  an open, non-empty set  in  $\R^{n+1}$.
For any subset $J$ of $\R$ and number $t\in \R$, denote
\beq\label{SJdef}
\mathcal S_J=\Gamma(Q)_J=\Gamma(Q)\cap (\mathbb R^n\times J)
\text{ and }
\mathcal S_t=\mathcal S_{\{t\}}=\Gamma(Q)\cap (\mathbb R^n\times \{t\}).
\eeq

Given two functions  $A:\widetilde Q\to \mathcal M^{n\times n}_{{\rm sym}}$ and $b:\widetilde Q\to \R^n$, define the linear operator $L$ in non-divergence form by
\beq \label{Ltil}
Lv = v_t-\langle A(x,t),D^2v\rangle  +b(x,t)\cdot \nabla v \text{ for }v\in C_{x,t}^{2,1}(\widetilde Q). 
\eeq 
where $D^2v$ is the Hessian matrix $(\partial^2v/\partial x_i\partial x_j)_{i,j=1,\ldots,n}$.
Recall that the value $Lv(x,t)$ at any $(x,t)\in\widetilde Q$ is understood according to Remark \ref{rmk1}.

We will often refer to the following assumption.

\begin{assumption}\label{secondA}
 Assume the following.
\begin{enumerate}[label=\tnum]
    \item\label{A1} There exists a constant $c_0>0$ such that
\beq\label{Aelip}
\xi^{\rm T} A(x,t)\xi \ge c_0|\xi|^2\text{ for all $(x,t)\in \widetilde Q$ and  $\xi\in \R^n$.}
\eeq 
    \item\label{A2} There exists a constant $M_1>0$ such that
\beqs%\label{TAB2}
{\rm Tr}(A(x,t))\le M_1 \text{ for all }(x,t)\in \widetilde Q.
\eeqs 
\end{enumerate}
\end{assumption}

\subsection{Maximum Principle}
We recall the standard Maximum Principle for a general set from \cite[Chapter 3, Corollary 2.1]{LandisBook}.

\begin{theorem}    [Maximum Principle] \label{genmax}
    Let $Q$ be an open, bounded, non-empty set in $\R^{n+1}$ 
    and suppose Assumption \ref{secondA}\ref{A1} holds.
    If $u\in C(\overline{Q})\cap C_{x,t}^{2,1}(\widetilde Q)$ satisfies $Lu\le 0$ in $\widetilde Q$, then 
\beq\label{maxmax}
\max_{\overline Q} u= \max_{\Gamma(Q)} u.
\eeq
\end{theorem}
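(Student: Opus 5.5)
The proof follows the classical perturbation argument, adapted to the general parabolic boundary of Definition \ref{parabb}. Since $Q$ is bounded, $\overline Q$ is compact. The set $\Gamma(Q)$ is closed by \eqref{closed} and hence compact, and it is non-empty. Indeed, take $t^*=\min \mathbb P_2(\overline Q)$ and a point $(x,t^*)\in\overline Q$. Every point $(y,s)$ with $s<t^*$ lies outside $Q$, so the first condition in \eqref{CCpt} fails at $(x,t^*)$. Hence $(x,t^*)\in\partial Q\setminus\gamma(Q)=\Gamma(Q)$. So both maxima in \eqref{maxmax} exist, and it suffices to prove $\max_{\overline Q}u\le\max_{\Gamma(Q)}u$.

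\emph{Step 1: strict subsolutions.} First assume $Lv<0$ in $\widetilde Q$ and that $v$ attains its maximum over $\overline Q$ at some $(x_0,t_0)\in\widetilde Q=\overline Q\setminus\Gamma(Q)$. We derive a contradiction.
\begin{itemize}
\item If $(x_0,t_0)\in Q$, it is an interior maximum. Then $\nabla v=0$, $D^2v\le 0$ and $v_t=0$ there.
\item If $(x_0,t_0)\in\gamma(Q)$, then by \eqref{tilnb} a half-cylinder $\mathcal C_{x_0,h/2}^{(t_0-h/2,t_0]}$ lies in $\widetilde Q$. The slice $B_{h/2}(x_0)\times\{t_0\}$ is a spatial neighbourhood on which $v(\cdot,t_0)$ has a local maximum at $x_0$, so $\nabla v=0$ and $D^2v\le0$. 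The left-sided time derivative of Remark \ref{rmk1} satisfies $v_t(x_0,t_0)\ge0$, because $v(x_0,t_0+h)-v(x_0,t_0)\le0$ for $h<0$.
\end{itemize}
In both cases, since $A$ is positive definite by \eqref{Aelip} and $D^2v\le0$, we get $\langle A,D^2v\rangle\le0$ (the trace of a product of a positive semidefinite matrix and a negative semidefinite matrix is $\le0$). Therefore $Lv=v_t-\langle A,D^2v\rangle+b\cdot\nabla v\ge0$, contradicting $Lv<0$. Hence the maximum of $v$ is attained on $\Gamma(Q)$.

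\emph{Step 2: perturbation.} For $\varepsilon>0$ set $v=u-\varepsilon t$. Then $Lv=Lu-\varepsilon<0$ in $\widetilde Q$, and the drift term is unaffected since $\nabla(\varepsilon t)=0$. This is the reason for perturbing in $t$ rather than with $e^{\lambda x_1}$: it needs no bound on $b$, matching the theorem's use of only Assumption \ref{secondA}\ref{A1}. We also have $v\in C(\overline Q)\cap C^{2,1}_{x,t}(\widetilde Q)$. By Step 1,
\[
\max_{\overline Q}u\le \max_{\overline Q}v+\varepsilon T\le\max_{\Gamma(Q)}v+\varepsilon T\le\max_{\Gamma(Q)}u+2\varepsilon T,
\]
where $T=\max_{\overline Q}|t|<\infty$. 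Letting $\varepsilon\to0$ gives \eqref{maxmax}.

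\emph{Main obstacle.} The only delicate point is the treatment of maxima at upper-base points $\gamma(Q)$. There one must check that the spatial second-order conditions are legitimate, which follows from \eqref{Bhg} and \eqref{tilnb}. One must also check that the one-sided time derivative has the correct sign. Here the fact that only left derivatives are used at $\gamma(Q)$ is essential. Everything else is the standard argument, valid for bounded coefficients $A$ only through positivity and for arbitrary $b$.
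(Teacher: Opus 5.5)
Your proof is correct. The paper gives no proof of its own; it quotes this result from \cite[Chapter 3, Corollary 2.1]{LandisBook}, and your argument (strict subsolutions, the perturbation $u-\varepsilon t$, which needs no bound on $b$, and the sign of the left time derivative at points of $\gamma(Q)$) is the standard proof behind that citation. The only detail you skip is in the non-emptiness of $\Gamma(Q)$: you also need $(x,t^*)\notin Q$. This holds because $Q$ is open, so otherwise $Q$ would contain points with time below $t^*$; hence $(x,t^*)\in\partial Q$.
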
 

Observe that the set $\Gamma(Q)$ on the right-hand side of \eqref{maxmax} is compact thanks to \eqref{Gcomp}.

As a consequence of Theorem \ref{genmax}, we obtain the following decrease in time for the spatial maximum of $u^+$.

\begin{theorem}\label{maxcor}
 Let $Q$ be a set in $\R^{n+1}$ that satisfies  Assumption \ref{GGamcond}.
Suppose Assumption \ref{secondA}\ref{A1} holds.
Let $t_1$ and $t_2$ be any numbers in $\mathbb P_2(\overline{Q})$ with $t_1<t_2$.
Let $u\in C(\overline{Q}_{[t_1,t_2]})\cap  C_{x,t}^{2,1}( \widetilde Q_{(t_1,t_2]})$ satisfy $Lu\le 0$ in $\widetilde Q_{(t_1,t_2]}$ and 
\beq \label{uSt12}
u\le 0\text{ on }\mathcal S_{(t_1,t_2]}. 
\eeq 
Then one has
\beq\label{maxcom}
\max_{\overline{Q}_{t_2}} u^+\le \max_{\overline{Q}_{t_1}} u^+.
\eeq
\end{theorem}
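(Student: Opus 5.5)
We split according to the two alternatives in Assumption \ref{GGamcond}\ref{asv} for the pair $t_1<t_2$. Throughout we set $E=Q_{(t_1,t_2)}$, which is open, bounded by \ref{asii}, and non-empty by \eqref{ttnone}, and we write $m=\max_{\overline{Q}_{t_1}}u^+$. This is finite because $\overline Q_{t_1}$ is compact by Lemma \ref{moreE}.

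\emph{Case (a): $(t_1,t_2)\in\mathcal G(Q)$.} By \eqref{tilEQ}, $\widetilde E\subset\widetilde Q_{(t_1,t_2]}$, so $Lu\le0$ on $\widetilde E$. Also $u\in C(\overline E)$ and $u\in C^{2,1}_{x,t}(\widetilde E)$, since $\overline E\subset\overline Q_{[t_1,t_2]}$. Theorem \ref{genmax} then gives $\max_{\overline E}u=\max_{\Gamma(E)}u$. We split $\Gamma(E)$ into its slices at $t_1$, at $t_2$, and in $(t_1,t_2)$:
\begin{itemize}
\item On $\Gamma(E)_{t_1}=\overline E_{t_1}\subset\overline Q_{t_1}$ (Lemma \ref{rmk2}\ref{pb}) we have $u\le m$.
\item On $\Gamma(E)_{(t_1,t_2)}=\Gamma(Q)_{(t_1,t_2)}$ (by \eqref{qe2}) and on $\Gamma(E)_{t_2}\subset\Gamma(Q)_{t_2}$ (Lemma \ref{rmk2}\ref{pb}), hypothesis \eqref{uSt12} gives $u\le0$.
\end{itemize}
Hence $u\le m$ on $\overline E$, in particular on $\overline E_{t_2}$. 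On the remaining part $\overline Q_{t_2}\setminus\overline E_{t_2}$, Lemma \ref{rmk2}\ref{pc} places the points in $\Gamma(Q)_{t_2}\subset\mathcal S_{(t_1,t_2]}$, where $u\le0$. Thus $u^+\le m$ on all of $\overline Q_{t_2}$, which is \eqref{maxcom}. (If $\overline E$ happened to be empty the argument would be vacuous, but Lemma \ref{rmk2}\ref{pd} and \eqref{ttnone} show that this does not occur.)

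\emph{Case (b).} Take the sequence $\tau_k\to t_2$ with $(t_1,\tau_k)\in\mathcal G(Q)$. We have $\tau_k\in I$ by connectedness \ref{asi}, and $u$ satisfies the hypotheses on $[t_1,\tau_k]$. Case (a) applied to each pair $(t_1,\tau_k)$, together with the intermediate bound obtained inside that argument, gives $u\le m$ on $\overline{Q_{(t_1,\tau_k)}}\supset\overline E_{\tau_k}$; here we note $\overline{Q_{(t_1,\tau_k)}}_{\tau_k}\subseteq\overline E_{\tau_k}$ and must check that the bound holds on the full slice $\overline E_{\tau_k}$. The cleanest route is to apply the full conclusion \eqref{maxcom} of case (a) at time $\tau_k$, which gives $u^+\le m$ on all of $\overline Q_{\tau_k}\supset\overline E_{\tau_k}$.

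Now fix $X\in\overline E_{t_2}$. By \eqref{Hsdk} there are points $X_k\in\overline E_{\tau_k}$ with $X_k\to X$. Since $u$ is continuous on the compact set $\overline Q_{[t_1,t_2]}$ (Lemma \ref{moreE}), we get $u^+(X)=\lim_k u^+(X_k)\le m$. Points of $\overline Q_{t_2}\setminus\overline E_{t_2}$ again lie in $\Gamma(Q)$ by Lemma \ref{rmk2}\ref{pc}, where $u\le0$. This proves \eqref{maxcom}.

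The main obstacle is the bookkeeping of boundary pieces in case (a). One has to ensure both that the equation holds on all of $\widetilde E$, which is exactly what membership in $\mathcal G(Q)$ gives via \eqref{tilEQ}, and that every point of $\Gamma(E)$ is either in $\overline Q_{t_1}$ or in $\mathcal S_{(t_1,t_2]}$. Case (b) is then a limiting argument, whose only delicate point is the uniform Hausdorff-type convergence \eqref{Hsdk} combined with uniform continuity of $u$ on a compact set.
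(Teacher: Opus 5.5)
Your proof is correct and follows the paper's argument essentially step by step. When $(t_1,t_2)\in\mathcal G(Q)$ you apply the maximum principle on $E=Q_{(t_1,t_2)}$ with the boundary bookkeeping from Lemma \ref{rmk2}, and you use Lemma \ref{rmk2}\ref{pc} to dispose of $\overline Q_{t_2}\setminus\overline E_{t_2}$. Otherwise you pass the conclusion at the times $\tau_k$ to $t_2$ via \eqref{Hsdk} and continuity of $u$; arguing pointwise where the paper uses maxima and uniform continuity is an immaterial difference.

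The aside about $\overline{Q_{(t_1,\tau_k)}}$ in case (b) is unnecessary, and the inclusion you state there can fail. However, the route you settle on, using the full conclusion at $\tau_k$ together with $\overline Q_{\tau_k}=\overline E_{\tau_k}$ from \eqref{qe1}, is exactly the paper's.
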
  
\begin{proof}
Denote $I=\mathbb P_2(\overline{Q})$ and $E=Q_{(t_1,t_2)}$. 
Then  $E\ne \emptyset$ thanks to \eqref{ttnone}, and $E$ is bounded thanks to Assumption \ref{GGamcond}\ref{asii}.
In fact, because of \eqref{OKbQ},  $\overline{Q}_t$ is compact for any $t\in I$. 
Clearly $\overline{E}\subset\overline{Q}$. We also observe the following. 
\begin{enumerate}[label=\rnum]
    \item  By the definition of $I$, we already have $\overline{Q}_{t_1}\ne \emptyset$ and $\overline{Q}_{t_2}\ne \emptyset$. Moreover, $(t_1,t_2)\subset I^0$.
    Then, thanks to Lemma \ref{rmk2}\ref{pd}, one has $\overline{E}_{t_1}\ne \emptyset$ and $\overline{E}_{t_2}\ne \emptyset$.
    
    \item Denote $F=\overline{Q}_{t_2}\setminus \overline{E}_{t_2}$.
    On the one hand, thanks to  Lemma  \ref{rmk2}\ref{pc}, we have $F\subset \Gamma(Q)_{t_2}$. On the other hand, it follows from \eqref{uSt12} that $u^+=0$ on $\Gamma(Q)_{t_2}$.
    Hence, in the case $F\ne \emptyset$,  $u^+=0$ on $F$. Therefore,
    \beq\label{mm3}
    \max_{\overline{Q}_{t_2}} u^+=\max_{\overline{E}_{t_2}} u^+.
    \eeq
    \item We decompose
\beq \label{ga}
\Gamma(E)=\Gamma(E)_{t_1}\cup \Gamma(E)_{(t_1,t_2)} \cup \Gamma(E)_{t_2}.
\eeq 
From part \ref{pa} of Lemma \ref{rmk2},  the second property in \eqref{qe2} implies
\beq \label{gb}
\Gamma(E)_{(t_1,t_2)}=\Gamma(Q)_{(t_1,t_2)}.
\eeq 
Recall the first and fourth properties of part \ref{pb} in Lemma \ref{rmk2} 
\beq \label{gc}
\Gamma(E)_{t_1}=\overline{E}_{t_1}\text{ and }\Gamma(E)_{t_2}\subset \Gamma(Q)_{t_2}.
\eeq
Utilizing \eqref{gb} and \eqref{gc} in \eqref{ga} gives
\beq\label{GQG}
 \overline{E}_{t_1}=\Gamma(E)_{t_1}\subset \Gamma(E)\subset \overline{E}_{t_1}\cup \Gamma(Q)_{(t_1,t_2]}=\overline{E}_{t_1}\cup \mathcal S_{(t_1,t_2]}.
\eeq
Since $u^+=0$ on $\mathcal S_{(t_1,t_2]}$, thanks to \eqref{uSt12}, we obtain from \eqref{GQG} that
\beq\label{GEQ}
\max_{\Gamma(E)}u^+=\max_{\overline{E}_{t_1}} u^+.
\eeq
\end{enumerate}

\medskip\noindent\textbf{Part A.} Consider the case when $(t_1,t_2)\in \mathcal G(Q)$, see Definition \ref{IG}.
Thanks to the consequence $\widetilde E\subset \widetilde Q$ from \eqref{tilEQ}, we have $u\in C(\overline{E})\cap C^{2,1}_{x,t}(\widetilde E)$ and $Lu\le 0$ in $\widetilde E$.
 Applying Theorem \ref{genmax} to the function $u$ and the set $E$,  we have 
 \beqs%\label{mm0}
\max_{\overline E} u = \max_{\Gamma(E)} u.
\eeqs
It follows that
 \beq\label{mmp}
\max_{\overline E} u^+ = \max_{\Gamma(E)} u^+.
\eeq
Regarding the left-hand side of \eqref{mmp}, we clearly have
 \beq\label{mEE}
\max_{\overline E} u^+\ge \max_{\overline{E}_{t_2}} u^+ .
\eeq
We deduce from \eqref{mEE}, \eqref{mmp}   and \eqref{GEQ} that
\beq\label{mm1}
\max_{\overline{E}_{t_2}} u^+
\le \max_{\overline E} u^+ = \max_{\Gamma(E)} u^+
= \max_{\overline{E}_{t_1}} u^+.
\eeq
We already have $\overline{E}_{t_i}\subset \overline{Q}_{t_i}$, for $i=1,2$. For the right-hand side of \eqref{mm1}, we we simply estimate
    \beq\label{mm2}
    \max_{\overline{E}_{t_1}} u^+\le \max_{\overline{Q}_{t_1}} u^+.
    \eeq
Combining \eqref{mm3}, \eqref{mm1}  and \eqref{mm2}, we obtain
\beqs
\max_{\overline{Q}_{t_2}} u^+=\max_{\overline{E}_{t_2}} u^+
\le \max_{\overline{E}_{t_1}} u^+\le \max_{\overline{Q}_{t_1}} u^+,
\eeqs
which proves \eqref{maxcom}.

\medskip\noindent\textbf{Part B.}  Consider the case when $(t_1,t_2)\not\in \mathcal G(Q)$. 
Thanks to Assumption \ref{GGamcond}\ref{asv},  there must be a sequence $(\tau_k)_{k=1}^\infty$ in the interval $(t_1,t_2)$ such that $\tau_k\to t_2$ as $k\to\infty$ and 
$(t_1,\tau_k)\in \mathcal G(Q)$ and \eqref{Hsdk} holds.
For any $k\ge 1$,  we apply Part A above to the pair $(t_1,\tau_k)\in \mathcal G(Q)$ to have
 \beq\label{maxtau}
\max_{\overline{Q}_{\tau_k}} u^+ \le \max_{\overline{Q}_{t_1}} u^+.
\eeq

Let $\varep>0$. By the uniform continuity of $u^+$ in $\overline E$, and \eqref{Hsdk}, one has, for sufficiently large $k$,  that
\beq\label{maxtau2}
\max_{\overline{E}_{t_2}} u^+ \le \max_{\overline{E}_{\tau_k}} u^+ +\varep=\max_{\overline{Q}_{\tau_k}} u^+ +\varep.
\eeq
(For the last identity, we used the second property in \eqref{qe1} to have $\overline{E}_{\tau_k}=\overline{Q}_{\tau_k}$.)
Thus, combining \eqref{mm3} with \eqref{maxtau2} gives
\beq\label{QQ0}
\max_{\overline{Q}_{t_2}} u^+ = \max_{\overline{E}_{t_2}} u^+ 
\le \max_{\overline{Q}_{\tau_k}} u^+ +\varep.
\eeq
Fixing a number $k$ in \eqref{QQ0} and applying inequality \eqref{maxtau}, we deduce 
\beq\label{QQe}
\max_{\overline{Q}_{t_2}} u^+ 
\le \max_{\overline{Q}_{t_1}} u^+ +\varep.
\eeq
Passing $\varep\to0$ in \eqref{QQe}, we obtain \eqref{maxcom}.
\end{proof}

The next result consists of estimates for inhomogeneous problems. It is a counter part of \cite[Theorem 3.1]{HI4}. 

\begin{theorem}\label{maxprin2}
Let $Q$ be a subset of $\R^{n+1}$ that satisfies  Assumption \ref{GGamcond}.
Suppose Assumption \ref{secondA}\ref{A1} holds.
Assume $I=[t_*,t^*]$ with $t_*<t^*$, or $I=[t_*,\infty)$ is a subset of $\mathbb P_2(\overline Q)$. Denote $I'=I\setminus\{t_*\}$.
Let $u\in C(\overline{Q}_I)\cap C_{x,t}^{2,1}(\widetilde Q_{I'})$.
Then the following statements hold true.
\begin{enumerate}[label=\tnum]
\item\label{MP1}  Suppose there is a function $f_1\in C(I,[0,\infty))$ such that 
    $Lu(x,t)\le f_1(t)$ for all $(x,t)\in \widetilde Q_{I'}$. 
Then
   \beq\label{max2}
        \max_{\overline{Q}_t} u^+ \le \max_{\overline{Q}_{t_*}\cup \mathcal S_{(t_*,t]}} u^+  +\int_{t_*}^t f_1(\tau)\d \tau \text{ for all }t\in I.
    \eeq
    
\item\label{MP3} Suppose there is a  function $f\in C(I,[0,\infty))$  such that
       $|Lu(x,t)|\le f(t)$ for all $(x,t)\in \widetilde Q_{I'}$. 
Then
    \beq\label{maxabs}
        \max_{\overline{Q}_t} |u| \le \max_{\overline{Q}_{t_*}\cup \mathcal S_{(t_*,t]}} |u| + \int_{t_*}^t f(\tau)\d \tau \text{ for all }t\in I.
        \eeq
\end{enumerate}
\end{theorem}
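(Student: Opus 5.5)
Fix $t\in I'$; the case $t=t_*$ is trivial. Denote $M=\max_{\overline{Q}_{t_*}\cup \mathcal S_{(t_*,t]}} u^+$. This maximum exists because $\overline{Q}_{t_*}$ is compact by \eqref{OKbQ} and $\mathcal S_{[t_*,t]}$ is compact by \eqref{GJcomp}, using Lemma \ref{moreE}.

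For part \ref{MP1}, I will subtract off the forcing and apply Theorem \ref{maxcor}. Set
\beqs
F(\tau)=\int_{t_*}^\tau f_1(s)\,\d s \quad\text{and}\quad w(x,\tau)=u(x,\tau)-F(\tau)-M.
\eeqs
Since $f_1$ is continuous, $F\in C^1(I)$. Hence $w\in C(\overline{Q}_{[t_*,t]})\cap C^{2,1}_{x,t}(\widetilde Q_{(t_*,t]})$. At points of $\gamma(Q)$ the derivative $w_t$ is the left-sided one, and it still equals $u_t-f_1$. Because $L$ has no zeroth-order term, we get $Lw=Lu-f_1(\tau)\le 0$ in $\widetilde Q_{(t_*,t]}$. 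On $\mathcal S_{(t_*,t]}$ we have $u\le M$ and $F\ge 0$, so $w\le 0$ there. Thus Theorem \ref{maxcor} applies with $t_1=t_*$ and $t_2=t$, giving
\beqs
\max_{\overline{Q}_t} w^+\le \max_{\overline{Q}_{t_*}} w^+ .
\eeqs
On $\overline{Q}_{t_*}$ we have $F=0$, so $w=u-M\le u^+-M\le 0$ and the right-hand side is $0$. Therefore $u\le M+F(t)$ on $\overline{Q}_t$. Since $M+F(t)\ge 0$, this yields $u^+\le M+F(t)$, which is \eqref{max2}.

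For part \ref{MP3}, I apply part \ref{MP1} twice. First apply it to $u$ with $f_1=f$, since $Lu\le f$. Then apply it to $-u$, since $L(-u)=-Lu\le f$, and use $(-u)^+=u^-$. This gives
\beqs
\max_{\overline{Q}_t}u^\pm\le \max_{\overline{Q}_{t_*}\cup\mathcal S_{(t_*,t]}}u^\pm+\int_{t_*}^t f\,\d\tau .
\eeqs
Both right-hand maxima are bounded by the maximum of $|u|$ over the same set, by \eqref{ffrel}. Taking the larger of the two left-hand sides and using \eqref{sup3} then gives \eqref{maxabs}.

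The only delicate step is checking that the hypotheses of Theorem \ref{maxcor} hold exactly on the right sets. The regularity is needed on $\widetilde Q_{(t_*,t]}$, which lies inside $\widetilde Q_{I'}$. The hypothesis $t_*<t$ both in $\mathbb P_2(\overline Q)$ holds because $I\subset\mathbb P_2(\overline Q)$. The subtle geometry, such as Assumption \ref{GGamcond}\ref{asv}, is already absorbed into Theorem \ref{maxcor}, so nothing further is needed there.
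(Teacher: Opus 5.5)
Your proposal is correct and essentially identical to the paper's proof. The paper likewise shifts $u$ by the constant $\max_{\overline{Q}_{t_*}\cup\mathcal S_{(t_*,T]}}u^+$ and by $\int_{t_*}^t f_1$, applies Theorem \ref{maxcor} on $[t_*,T]$, and then obtains part (ii) by applying part (i) to $u$ and $-u$ and taking the maximum of $u^+$ and $u^-$.
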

\begin{proof}
Observe that the set $\overline{Q}_{t_*}\cup \mathcal S_{(t_*,t]}$ on the right-hand side of \eqref{max2} is, in fact,
$\overline{Q}_{t_*}\cup \mathcal S_{[t_*,t]}$, hence, it is compact thanks to \eqref{GJcomp}.
Define the function 
\beqs 
G_1(t)=\max_{\overline{Q}_{t_*}\cup \mathcal S_{(t_*,t]}} u^+\text{ for }t\in I.
\eeqs 
Note that $G_1(t)$ is increasing.

(i) Consider any number $T\in I'$ first. Define 
$v(x,t)=u(x,t)-G_1(T)-\int_{t_*}^t f_1(\tau)\d\tau$.
Then
\beq \label{Lvf} 
L v(x,t)=Lu(x,t)- f_1(t)\le 0 \text{ in }\widetilde Q_{I'}.
\eeq 
For $(x,t)\in \overline{Q}_{t_*}\cup \mathcal S_{(t_*,T]}$, one has, see also \eqref{fpm}, 
    \beq \label{vuG}
    v(x,t)\le u(x,t)-G_1(T)\le u^+(x,t)-G_1(T) \le G_1(t)-G_1(T)\le0.
    \eeq 
   Thanks to \eqref{Lvf} and \eqref{vuG}, we can apply Theorem \ref{maxcor}  to $u:=v$, $t_1:=t_*$ and $t_2:=T$. It results in   
    \beqs 
    \max_{\overline{Q}_T} v^+\le \max_{\overline{Q}_{t_*}} v^+=0.
    \eeqs
(The last identity is due to \eqref{vuG} again.)   
        Thus, $v^+=0$ in $\overline{Q}_T$, which  implies $v\le 0$ in $\overline{Q}_T$, that is,
    \beqs 
    u(x,T)\le G_1(T)+\int_{t_*}^T f_1(\tau)\d\tau
    \text{ for all }x\in \mathbb P_1(\overline{Q}_{T}).
    \eeqs 
    The last inequality proves the inequality in \eqref{max2} for $t:=T$. Because $T$ is arbitrary in $I'$, and, clearly,  the inequality in \eqref{max2} holds for $t:=t_*$, we obtain \eqref{max2}.

(ii) By applying  part (i) to both $u$ and $(-u)$ and $f_1:=f$, and also using \eqref{ffrel}, we have
    \beq\label{maxpm}
        \max_{\overline{Q}_t} u^+,\max_{\overline{Q}_t} u^- \le \max_{\overline{Q}_{t_*}\cup \mathcal S_{(t_*,t]}} |u|+\int_{t_*}^t f(\tau)\d \tau \text{ for all }t\in I.
    \eeq
    Observe from \eqref{sup3} that
\beq \label{maxrel}
\max_{\overline{Q}_t} |u|=\max\left\{ \max_{\overline{Q}_t} u^+,\max_{\overline{Q}_t} u^-\right\}.
\eeq
Then combining the  estimates in \eqref{maxpm} with \eqref{maxrel} yields \eqref{maxabs}.
\end{proof}

\subsection{Growth Lemma}
Next, we establish a growth lemma for a subsolution of $L$ in bounded subsets of $\R^{n+1}$. 
\textit{For the remainder of this section, let $Q$ be a subset of $\R^{n+1}$ that satisfies  Assumption \ref{GGamcond}.}
Let $t_*\in \R$ and $T>0$ such that $I\eqdef [t_*,t_*+T]$ is a subset of $\mathbb P_2(\overline{Q})$. Denote $I'=I\setminus\{t_*\}=(t_*,t_*+T]$.

Let Assumptions \ref{secondA} be satisfied and assume  there is a constant $m_*\ge 0$ so that
\beq\label{TAB3}
 |b(x,t)|\le m_* \text{ for all }(x,t)\in \widetilde Q_{I'}.
\eeq

Referring to definitions \eqref{PP12}, \eqref{di2} and \eqref{Rs3}, let
\begin{align} \label{Vset}
V&=\mathbb P_1(\overline Q_{I})\text{ and  $x_*$ be a point in $\R^n\setminus V$},\\
 r&={\rm dist}(x_*,V)\text{ and }
  R=R_*(x_*,V).\notag
\end{align}
Define
 \beq\label{sTe1}
 \beta=\max\left \{ \frac{M_1+m_*R}{2c_0},\frac{R^2}{4c_0T}\right\}
 \text{ and } \eta_*=1-(r/R)^{2\beta}.
 \eeq

Since $\mathbb P_1$ is continuous and $\overline Q_{I}$ is non-empty and compact, the image set $V$ in \eqref{Vset} is non-empty and  compact. 
Together with the fact $x_*\notin V$, this implies $r>0$. 
Since $Q$ is open and non-empty, the set $V$ has non-empty interior which in turn implies that $R>r$, see \eqref{Rrpos}.
Consequently,
\beqs
\beta>0\text{ and }
\eta_*\in(0,1).
\eeqs 

The following Growth Lemma is a the counter part of \cite[Lemma 4.2]{HI3Rus}.

\begin{lemma}[Growth Lemma]\label{stgrowth}
Let  $u\in C(\overline{Q}_{I})\cap C_{x,t}^{2,1}(\widetilde Q_{I'})$ satisfy $Lu\le 0$ in $\widetilde Q_{I'}$ and $u\le 0$ on $\mathcal S_{I'}$.
Then one has
\beq \label{Tgrow3}
\max_{\overline{Q}_{t_*+T}} u^+\le \eta_* \max_{\overline{Q}_{t_*}} u^+.
\eeq 
\end{lemma}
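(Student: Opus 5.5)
The plan is to compare $u$ with an explicit supersolution built from the radial function $|x-x_*|^{-2\beta}$, using Theorem \ref{maxcor} as the comparison tool. Set $M=\max_{\overline{Q}_{t_*}}u^+$; if $M=0$, then \eqref{Tgrow3} is just \eqref{maxcom}, so assume $M>0$. Write $y=x-x_*$. For every $x\in V$ we have $r\le |y|\le R$, and $|y|\ge r>0$ keeps everything smooth on $\overline{Q}_I$. I will look for a barrier of the form
\beqs
w(x,t)=M\bigl(1-\theta(t)\,\phi(x)\bigr),\qquad \phi(x)=\Bigl(\frac{r}{|x-x_*|}\Bigr)^{2\beta},
\eeqs
with $\theta\ge 0$ and $\theta(t_*+T)=1$. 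With this normalization, at $t=t_*+T$ we get $w\le M(1-(r/R)^{2\beta})=\eta_*M$, which is exactly the bound claimed in \eqref{Tgrow3}.

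\emph{Step 1: the spatial computation.} A direct calculation gives
\beqs
D^2\phi=2\beta r^{2\beta}|y|^{-2\beta-2}\Bigl(-I+(2\beta+2)\frac{yy^{\rm T}}{|y|^2}\Bigr).
\eeqs
Using Assumption \ref{secondA} ($y^{\rm T}Ay\ge c_0|y|^2$ and ${\rm Tr}A\le M_1$) and \eqref{TAB3} with $|y|\le R$, this yields
\beqs
-\langle A,D^2\phi\rangle+b\cdot\nabla\phi\le -2\beta r^{2\beta}|y|^{-2\beta-2}\bigl(2(\beta+1)c_0-M_1-m_*R\bigr).
\eeqs
The first term in the definition \eqref{sTe1} of $\beta$ makes the bracket at least $2c_0$. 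So the spatial part of $L$ applied to $-\phi$ is bounded below by $4\beta c_0\,\phi\,|y|^{-2}\ge 4\beta c_0R^{-2}\phi$.

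\emph{Step 2: choosing $\theta$.} We have
\beqs
Lw=M\bigl(-\theta'\phi+\theta\,(\text{spatial part of }L(-\phi))\bigr)\ge M\phi\bigl(4\beta c_0R^{-2}\theta-\theta'\bigr).
\eeqs
This is $\ge 0$ provided $\theta'\le 4\beta c_0R^{-2}\theta$. The second term in \eqref{sTe1}, namely $\beta\ge R^2/(4c_0T)$, gives $4\beta c_0R^{-2}\ge 1/T$. Hence any $\theta$ with $\theta'\le\theta/T$ works, for instance $\theta(t)=e^{(t-t_*-T)/T}$. On the parabolic part $\mathcal S_{I'}$ we have $u\le 0\le w$, since $\theta\phi\le 1$ there.

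\emph{Step 3: the initial time, which is the main obstacle.} We need $u\le w$ on $\overline{Q}_{t_*}$. But $w(\cdot,t_*)=M(1-\theta(t_*)\phi)<M$, so the exponential choice does not directly dominate $u$ at $t_*$. The first idea, $\theta(t_*)=0$ (for example $\theta=(t-t_*)/T$), breaks the differential inequality near $t_*$. What I would try is to combine the two ingredients. First, run the comparison with a $\theta$ that vanishes at $t_*$ and is linear in $t$, but with $\phi$ replaced by a space-time function adapted to the heat scaling, such as $\bigl(r^2/(|y|^2+4c_0\beta^{-1}(\cdot))\bigr)^{\beta}$. This is the role of the condition $\beta\ge R^2/(4c_0T)$: the $T$-dependence should absorb the $\theta'$ term uniformly on $[t_*,t_*+T]$. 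Alternatively, use that the constant $M$ is a supersolution and only glue in the decaying barrier for $t$ away from $t_*$. Once a $w$ with $Lw\ge 0$ in $\widetilde Q_{I'}$, $w\ge u$ on $\overline{Q}_{t_*}\cup\mathcal S_{I'}$, and $w\le\eta_*M$ at $t_*+T$ is in hand, I apply Theorem \ref{maxcor} to $u-w$ on $[t_*,t_*+T]$. This gives $\max_{\overline Q_{t_*+T}}(u-w)^+\le \max_{\overline Q_{t_*}}(u-w)^+=0$, hence \eqref{Tgrow3}.
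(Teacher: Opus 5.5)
There is a genuine gap at Step 3. Your computations in Steps 1 and 2 are correct, but you never produce a comparison function that dominates $u$ at $t=t_*$, and neither of your suggested repairs can produce one.

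Write $w=M(1-\Psi)$. You need $L\Psi\le 0$, $\Psi\le 0$ on $\overline{Q}_{t_*}$, $\Psi\le 1$ on $\mathcal S_{I'}$, and $\Psi\ge (r/R)^{2\beta}$ at time $t_*+T$. Suppose $\Psi=\theta(t)\psi(x)$ with $\psi$ smooth and positive on the compact set $V$. Then $L\Psi\le 0$ forces $\theta'\le\theta\,(\langle A,D^2\psi\rangle-b\cdot\nabla\psi)/\psi\le C\theta$, so $\theta(t_*)=0$ gives $\theta\equiv 0$ by Gronwall. This is the real reason your exponential $\theta$ cannot be started at zero.

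Your first repair does not escape this. Take $\theta$ linear with $\theta(t_*)=0$ and $\psi(x,t)$ a space-time factor that stays positive at $t=t_*$, as your heat-scaled candidate does. Then $L(\theta\psi)=\theta'\psi+\theta L\psi\to\psi(\cdot,t_*)/T>0$ as $t\searrow t_*$, so $Lw<0$ near the initial time.

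The gluing repair fails as well. Since $w\le M$, one has $\min\{M,w\}=w$, so gluing by a minimum changes nothing. The maximum of two supersolutions is not a supersolution, and Theorem \ref{maxcor} requires a $C^{2,1}_{x,t}$ function in any case. A supersolution that is $\ge M$ at $t_*$ and $\le\eta_*M$ at $t_*+T$ is precisely what is missing.

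The missing idea is a barrier that vanishes to infinite order at the initial time on $V$. This is possible exactly because the pole $x_*$ lies at positive distance $r$ from $V$. Take the heat-kernel-type function $W(x,t)=(t-t_*)^{-\beta}\exp\{-\varphi(x)/(t-t_*)\}$ with $\varphi(x)=|x-x_*|^2/(4c_0)$, extended by $0$ for $t\le t_*$. Since $\varphi\ge d_0=r^2/(4c_0)$ on $V$, $W$ is continuous on $\overline{Q}_I$ and vanishes on $\overline{Q}_{t_*}$.

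In $LW$, the $(t-t_*)^{-2}$ part is $\varphi-(A\nabla\varphi)\cdot\nabla\varphi\le 0$, by ellipticity and the identity $c_0|\nabla\varphi|^2=\varphi$. The $(t-t_*)^{-1}$ part is nonpositive because $\beta\ge(M_1+m_*R)/(2c_0)$.

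Now set $\widetilde W=M(1-\eta W)$ with $\eta=(d_0e/\beta)^\beta$. It equals $M$ at $t_*$, and it is $\ge 0$ on $V\times(t_*,\infty)$ because $\max_{s>0}s^{-\beta}e^{-d_0/s}=\eta^{-1}$. At $t=t_*+T_*$ with $T_*=R^2/(4c_0\beta)$ it is at most $M(1-(r/R)^{2\beta})=\eta_*M$.

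In this construction the condition $\beta\ge R^2/(4c_0T)$ serves only to guarantee $T_*\le T$, and Theorem \ref{maxcor} carries the bound from $t_*+T_*$ to $t_*+T$. It is not used to absorb a $\theta'$ term as in your Step 2.
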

\begin{proof} 
Observe that both sets $\widetilde Q_{I'}$ and $\mathcal S_{I'}$ are subsets of $\overline{Q}_{I'}$, and hence are subsets of $\overline{Q}_{I}$.
Without loss of the generality, assume $t_*=0$ so that $I=[0,T]$ and $I'=(0,T]$.

\medskip\noindent\textbf{Part A. Case $(0,T)\in\mathcal G(Q)$.}  We modify the proof of \cite[Lemma 4.2]{HI3Rus}.
Because $\overline{Q}_t\subset V\times\{t\}$ for all $t\in[0,T]$,  one has 
\beq\label{QV}
\overline Q_{I}=\bigcup_{t\in[0,T]} \overline{Q}_t\subset V\times[0,T]\text{ and }
\widetilde Q_{I'}\subset \overline Q_{I'}=\bigcup_{t\in(0,T]} \overline{Q}_t\subset V\times (0,T].
\eeq

\medskip\noindent\textit{Step 1.} 
Let $\mu=(4c_0)^{-1}$ and define  a function 
$\varphi(x)=\mu |x-x_*|^2$ for $x\in\R^n$.
Then $\varphi$ belongs to $C^\infty(\R^n)$ and satisfies
\begin{align}\label{genphi1}
&d_0\le \varphi\le d_1,\ 
|\nabla \varphi|\le d_2\text{ and }  
 c_0|\nabla \varphi|^2 =\varphi \text{ in $V$,}\\ 
\label{genphi2}
&|\langle A(x,t),D^2\varphi(x) \rangle|=2\mu |{\rm Tr}(A(x,t))|\le d_3  \text{ for all $(x,t)\in\widetilde Q_{I'}$,}
\end{align}
where
\beq \label{d03}
d_0=\mu r^2>0,\ 
d_1=\mu R^2>0,\ 
d_2=2\mu R>0\text{ and } 
d_3=2\mu M_1>0.
\eeq 
Moreover, thanks to \eqref{Aelip},
\beq
\label{genphi3}
( A\nabla\varphi)\cdot \nabla\varphi \ge c_0|\nabla \varphi|^2=\varphi \text{ in }\widetilde Q_{I'}.
\eeq 

Define a function $W$ on $\R^{n+1}$ by
\beq\label{Wdef}
W(x,t)=\begin{cases}
    t^{-\beta}e^{-\frac{\varphi(x)}{t}}& \text{ if } (x,t)\in \R^n\times(0,\infty),\\
    0,&\text{ if } (x,t)\in \R^n\times(-\infty,0].
\end{cases}
\eeq

On the one hand, it is obvious that
\beq \label{WC2}
W\in C^\infty (\R^n\times (0,\infty)).
\eeq 
On the other hand, because of the lower bound of $\varphi$ in \eqref{genphi1} and \eqref{d03}, and the formula of $W$ in \eqref{Wdef}, the function $W$ is continuous in  $V\times [0,\infty)$.
Together with the first property in \eqref{QV}, it implies that
\beq\label{WC0}
W\in C(\overline Q_{I}).
\eeq 

\medskip\noindent\textit{Step 2.} 
Elementary calculations shows, for $(x,t)\in \widetilde Q_{I'}$, that
\beq\label{LW}
L W(x,t)=\frac{W}{t^2}\left\{ t(-\beta+ \langle A,D^2\varphi \rangle - b\cdot \nabla \varphi) +\varphi   -(A\nabla \varphi)\cdot\nabla \varphi \right\}.
\eeq
Observe from \eqref{TAB3}, \eqref{genphi1} and \eqref{genphi2} that
\beqs %\label{sval}
|\langle A,D^2\varphi \rangle| + |b| |\nabla \varphi|
\le \beta_*\eqdef d_3 + m_* d_2=\frac{M_1+m_*R}{2c_0}\text{ in } \widetilde Q_{I'}.
\eeqs 
Together with the fact $\beta\ge \beta_*$ from \eqref{sTe1}, this implies
\beq\label{spineq}
\beta\ge  \langle A,D^2\varphi \rangle - b\cdot \nabla \varphi 
 \text{ in } \widetilde Q_{I'}.
\eeq
Utilizing inequalities \eqref{spineq} and \eqref{genphi3} in \eqref{LW} yields
 $L W\le 0$ in $\widetilde Q_{I'}$. 

\medskip\noindent\textit{Step 3.} 
Set $M=\max_{\overline Q_0} u^+$ and define the function
\beqs 
\widetilde W(x,t)=M(1-\eta W(x,t))\text{ in }\R^{n+1}, \text{ where }\eta=(d_0 e/\beta)^\beta>0.
\eeqs 
By the conclusion at the end of Step 2, one has 
\beq \label{LtilW}
L \widetilde W\ge 0\text{ in }\widetilde Q_{I'}.
\eeq 

Because $W=0$ on $\R^n\times\{0\}$, we have 
\beq\label{bdv1}
\widetilde W(x,0)=M\ge u(x,0)\text{ for all $x\in \overline{Q}_0$, which implies } \widetilde W\ge u \text{ on }\overline{Q}_0.
\eeq
Observe, for $t>0$ and $x\in V$,  that 
\beqs
\widetilde W(x,t)=M\left (1-\eta t^{-\beta} e^{-\varphi(x)/t}\right)\ge M\left(1-\eta t^{-\beta}e^{-d_0/t}\right).
\eeqs

Elementary calculations show that the function $h_0(t)=t^{-\beta}e^{-d_0/t}$ on $(0,\infty)$ attains  the maximum at $t_0=d_0/\beta$ with the value $h_0(t_0)=\eta^{-1}$.
Thus, one has
\beq\label{Lwtil}
\widetilde W(x,t)\ge M\left(1-\eta h_0(t_0)\right)=0\text{ for all } (x,t)\in V\times(0,\infty).
\eeq
By \eqref{Lwtil}, the facts $u\le 0$ on $\mathcal S_{I'}$ and $\mathcal S_{I'}\subset \overline Q_{I}\times(0,\infty)\subset V\times(0,\infty)$, we imply
\beq\label{bdv2}
\widetilde W\ge u \text{ on }\mathcal S_{I'}.
\eeq

\medskip\noindent\textit{Step 4.} 
Define $v=u-\widetilde W$. By the properties \eqref{WC2} and \eqref{WC0} of $W$, we have  $v\in C(\overline{Q}_{I})\cap  C_{x,t}^{2,1}( \widetilde Q_{I'})$.
By \eqref{LtilW}, we have $Lv\le 0$ in $\widetilde Q_{I'}$. Thanks to \eqref{bdv1} and \eqref{bdv2}, $v\le 0$ on $\overline{Q}_0$ and $\mathcal S_{I'}$.
Applying  Theorem \ref{maxcom}  to the function $v$ yields
\beqs
\max_{\overline{Q}_{t}} v^+\le \max_{\overline{Q}_0} v^+=0\text{ for all $t\in I$.}
\eeqs
Therefore, 
\beq\label{Ww}
u\le \widetilde W\text{ in } \overline{Q}_{I}.
\eeq

For $(x,t)\in V\times (0,T]$, one has from the upper bound of $\varphi(x)$ in \eqref{genphi1} that
\beq\label{We}
\widetilde W(x,t)\le M\left(1-\eta t^{-\beta} e^{-d_1/t}\right).
\eeq

Set $T_*=R^2/(4c_0\beta)=d_1/\beta\in(0,T]$.
It follows from \eqref{Ww} and \eqref{We}, for all $(x,T_*)\in \overline{Q}_{T_*}$, that 
\beqs
u(x,T_*)\le \widetilde W(x,T_*)
\le M\left[1-\left(\frac{d_0e}{\beta}\right )^\beta \left(\frac{d_1}{\beta}\right)^{-\beta} e^{-d_1(\beta/d_1)}\right]
=M\left[1-\left(\frac{d_0}{d_1}\right)^{\beta}\right]=\eta_* M.
\eeqs
Thus, we obtain the inequality 
\beq \label{Tgrow}
 \max_{\overline{Q}_{T_*}} u^+\le \eta_* \max_{\overline{Q}_0} u^+.
 \eeq

If $T_*=T$, then inequality \eqref{Tgrow3} immediately follows from \eqref{Tgrow}.
 In the case $T_*<T$, by applying Theorem \ref{maxcor} to $t_1:=T_*$ and $t_2=T$ and then combining its inequality \eqref{maxcom} with \eqref{Tgrow}, we obtain
\beqs 
\max_{\overline{Q}_{T}} u^+\le  \max_{\overline{Q}_{T_*}} u^+\le \eta_* \max_{\overline{Q}_0} u^+,
\eeqs 
which yields inequality  \eqref{Tgrow3}.

\medskip\noindent\textbf{Part B. Case $(0,T)\not\in\mathcal G(Q)$.}  Let  $(\tau_k)_{k=1}^\infty$ be the sequence in \ref{asb} of Assumption \ref{GGamcond}\ref{asv} for $t_1=0$ and $t_2=T$.
For $k\ge 1$, define
\begin{align*} 
V_k&=\mathbb P_1(\overline Q_{[t_*,\tau_k]}), \
 r_k={\rm dist}(x_*,V_k),\  
  R_k=R_*(x_*,V_k),\\
 \beta_k&=\max\left \{ \frac{M_1+m_*R_k}{2c_0},\frac{R_k^2}{4c_0\tau_k}\right\}
 \text{ and } \eta_{*,k}=1-(r_k/R_k)^{2\beta_k}.
 \end{align*}
For $k\ge 1$, we have $V_k\subset V$ which implies $x_*\in \R^n\setminus V_k$. Consequently, $r_k\ge r$ and $R_k\le R$, which in turn yield
 \begin{align*}
 \beta_k\le \bar \beta_k \eqdef \max\left \{ \frac{M_1+m_*R}{2c_0},\frac{R^2}{4c_0\tau_k}\right\}
 \text{ and } 
\eta_{*,k}\le \bar \eta_{*,k}\eqdef 1-(r/R)^{2\bar \beta_k}.
 \end{align*}
For $k\ge 1$, applying the result in Part A to $T:=\tau_k$ gives 
\beq\label{Tgtau}
 \max_{\overline{Q}_{\tau_k}} u^+\le \eta_{*,k} \max_{\overline{Q}_0} u^+
 \le \bar\eta_{*,k} \max_{\overline{Q}_0} u^+.
 \eeq
 
 Given $\varep>0$, we apply inequality \eqref{QQ0} to $t_2:=T$ to have, for sufficiently large $k$,
 \beq \label{Tge}
 \max_{\overline{Q}_{T}} u^+\le \max_{\overline{Q}_{\tau_k}} u^++\varep.
 \eeq
Combining \eqref{Tgtau} with \eqref{Tge} yields
\beq \label{Tgtau2}
 \max_{\overline{Q}_T} u^+\le \bar\eta_{*,k} \max_{\overline{Q}_0} u^++\varep.
 \eeq
 Note that $\bar\beta_k\to\beta$ and $\bar\eta_{*,k}\to\eta_*$.
By passing $k\to\infty$ and then $\varep\to 0$ in \eqref{Tgtau2}, we obtain \eqref{Tgrow3}.
\end{proof}

For inhomogeneous problems, we obtain the following counter part of \cite[Lemma 4.4]{HI4} with a similar proof.

\begin{theorem}\label{lemG2}
Let  $u$ belong to $C(\overline{Q}_{I})\cap C_{x,t}^{2,1}(\widetilde Q_{I'})$.
Suppose  there is a function $F\in C(I,[0,\infty))$ such that 
    $Lu(x,t)\le F(t)$ for all $(x,t)\in \widetilde Q_{I'}$.     
Then one has
 \beq \label{Tgrow4}
\max_{\overline{Q}_{t_*+T}} u^+
\le \eta_* \max_{\overline{Q}_{t_*}} u^+
+\sup_{\mathcal S_{(t_*,t_*+T]}} u^+ +\int_{t_*}^{t_*+T}F(t)\d t.
\eeq 
\end{theorem}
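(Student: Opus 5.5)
The plan is to reduce Theorem \ref{lemG2} to the homogeneous Growth Lemma \ref{stgrowth} by subtracting a function that is constant in space and absorbs both the forcing and the boundary data. Set $T_0=t_*+T$ and
\beqs
K=\sup_{\mathcal S_{(t_*,T_0]}} u^+ ,
\eeqs
which is finite because $\mathcal S_{[t_*,T_0]}$ is compact by \eqref{GJcomp} and $u$ is continuous on $\overline{Q}_I$. Define
\beqs
v(x,t)=u(x,t)-K-\int_{t_*}^t F(\tau)\d\tau \quad\text{for }(x,t)\in \overline{Q}_I .
\eeqs
Then $v\in C(\overline{Q}_I)\cap C^{2,1}_{x,t}(\widetilde Q_{I'})$. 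Since the subtracted term depends only on $t$ and is $C^1$ in $t$ (with the left derivative at upper-base points, as in Remark \ref{rmk1}), we get $Lv=Lu-F(t)\le 0$ in $\widetilde Q_{I'}$. On $\mathcal S_{I'}$ we have $v\le u^+-K\le 0$. Hence Lemma \ref{stgrowth} applies to $v$ with the same $V$, $x_*$, $r$, $R$, $\beta$, $\eta_*$, because these depend only on $Q$, $I$ and the coefficients. It gives
\beqs
\max_{\overline{Q}_{T_0}} v^+\le \eta_*\max_{\overline{Q}_{t_*}} v^+ .
\eeqs

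Next we translate this back to $u$. On $\overline{Q}_{t_*}$ we have $v=u-K\le u^+$, so $v^+\le u^+$ there, and the right-hand side is at most $\eta_*\max_{\overline{Q}_{t_*}}u^+$. On $\overline{Q}_{T_0}$ we have $u=v+K+\int_{t_*}^{T_0}F\le v^++K+\int_{t_*}^{T_0}F$. The quantity $K+\int F$ is nonnegative, so the same bound holds for $u^+$. Taking the maximum over $\overline{Q}_{T_0}$ yields \eqref{Tgrow4}.

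The only step requiring care is checking that the hypotheses of Lemma \ref{stgrowth} transfer exactly. Its proof handles both cases of Assumption \ref{GGamcond}\ref{asv} (whether or not $(t_*,T_0)\in\mathcal G(Q)$), so no extra case split is needed here. The one subtle point is that the shift must be constant in $x$. This keeps the spatial terms of $L$ unaffected, and it also keeps the bound on $v$ at time $t_*$ free of $\int F$. That is why the integral is taken from $t_*$ and $K$ is fixed over the whole interval, rather than letting either vary with $t$.
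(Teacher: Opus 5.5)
Your proposal is correct and follows essentially the same argument as the paper. You subtract the spatially constant function $\sup_{\mathcal S_{(t_*,t_*+T]}}u^+ +\int_{t_*}^t F(\tau)\d\tau$, apply Lemma \ref{stgrowth} to the resulting subsolution $v$, and then transfer back to $u$ using $v^+\le u^+$ on $\overline{Q}_{t_*}$ and $u\le v^+ +\sup_{\mathcal S_{(t_*,t_*+T]}}u^+ +\int_{t_*}^{t_*+T}F$ on $\overline{Q}_{t_*+T}$; the only quibble is that \eqref{GJcomp} is stated for bounded $Q$, so finiteness of the supremum is better justified by \eqref{closed} together with \eqref{OKbQ}, which is how the paper argues it.
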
 
\begin{proof}
Thanks to \eqref{OKbQ}, the maxima and supremum in \eqref{Tgrow4} exist.
Denote 
$$\delta=\sup_{\mathcal S_{(t_*,t_*+T]}} u^+,\quad D=\int_{t_*}^{t_*+T}F(t)\d t,$$
and define the function 
\beqs 
v(x,t)=u(x,t)-\delta-\int_{t_*}^t F(\tau)\d\tau \text{ for }(x,t)\in \overline Q_{I}.
\eeqs 
Then $v\in C(\overline{Q}_{I})\cap C_{x,t}^{2,1}(\widetilde Q_{I'})$ and  
\beq \label{GLv}
L v=Lu -F(t)\le 0\text{ in }\widetilde Q_{I'}.
\eeq 
Moreover, for the boundary values of $v$, one has
\beqs%\label{Gvgam}
v(x,t)\le u(x,t)-\delta\le u^+(x,t)-\delta\le 0
\text{ for all } (x,t)\in\mathcal S_{(t_*,t_*+T]}.
\eeqs 
By Lemma \ref{stgrowth} applied to the subsolution $v$ of $L$, see \eqref{GLv}, we have 
\beq\label{Gvmax1}
\max_{\overline{Q}_{t_*+T}} v^+\le \eta_* \max_{\overline{Q}_{t_*}} v^+.
\eeq
For $(x,t_*)\in \overline{Q}_{t_*}$, one has  $v(x,t_*)= u(x,t_*)-\delta$, 
hence, 
\beq\label{GvJ}
\max_{\overline{Q}_{t_*}} v^+\le \max_{\overline{Q}_{t_*}} u^+.
\eeq 
For $(x,t_*+T)\in \overline{Q}_{t_*+T}$, one has  
\beqs 
u(x,t_*+T)=v(x,t_*+T)+\delta+D\le \max_{\overline{Q}_{t_*+T}} v^+ +\delta +D.
\eeqs
Thus,
\beq\label{GJb}
\max_{\overline{Q}_{t_*+T}} u^+\le  \max_{\overline{Q}_{t_*+T}} v^+ +\delta+D.
\eeq
Applying inequality \eqref{Gvmax1} to estimate the  maximum on the right-hand side of \eqref{GJb}, and then utilizing inequality \eqref{GvJ},  we obtain 
$$\max_{\overline{Q}_{t_*+T}} u^+ \le \eta_* \max_{\overline{Q}_{t_*}} v^+ +\delta+D
\le \eta_* \max_{\overline{Q}_{t_*}} u^+ +\delta+D,$$ 
which proves \eqref{Tgrow4}.
\end{proof}

\section{Homogeneous problems}\label{homsec}

Throughout this section, $Q$ is  a subset of $\R^n\times(0,\infty)$ that satisfies Assumption \ref{GGamcond} with $\mathbb P_2(\overline{Q})=[0,\infty)$.
Given two functions  $A:\widetilde Q\to \mathcal M^{n\times n}_{{\rm sym}}$ and $b:\widetilde Q\to \R^n$, define  the linear operator $L$ on $C_{x,t}^{2,1}(\widetilde Q)$ by \eqref{Ltil}.
When $b\equiv 0$, the operator $L$ becomes 
\beq \label{Lz}
 L_0 v = v_t-\langle A(x,t),D^2 v\rangle \text{ for }v\in C_{x,t}^{2,1}(\widetilde Q).
\eeq

We first obtain a very general estimate in the case of zero drift, i.e., $b\equiv 0$.

\begin{theorem}\label{STthm0}
Suppose $A(x,t)$ is bounded in $\widetilde Q$ and satisfies Assumption \ref{secondA}\ref{A1}. Then there exist $C_0\ge 1$ and  $\beta>0$ such that 
the following statement holds true.
If $u\in C(\overline{Q})\cap C_{x,t}^{2,1}(\widetilde Q)$ satisfies $L_0 u\le 0$ in $\widetilde Q$ and $u\le 0$ on $\Gamma(Q)\setminus \overline{Q}_0$, then 
  \begin{equation}\label{beta-est}
        \max_{\overline{Q}_t} u^+\le C_0\left(\max_{\overline{Q}_0} u^+\right) (t+1)^{-\beta} \text{ for all }t\ge 0.
    \end{equation}
\end{theorem}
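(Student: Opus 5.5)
The plan is to avoid iterating the Growth Lemma altogether. In the zero-drift case, with $A$ bounded above, there is an explicit positive supersolution of $L_0$ that decays like $(t+1)^{-\beta}$ uniformly in $x$. Comparing $u$ with a multiple of it through Theorem \ref{maxcor} gives \eqref{beta-est} directly, with no growth condition on the spatial cross sections. The barrier is a Gaussian centered at a fixed point, shifted in time by $1$ so that it is smooth and positive at $t=0$.

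\emph{Construction of the barrier.} Let $\Lambda>0$ be such that $\xi^{\rm T}A(x,t)\xi\le \Lambda|\xi|^2$ on $\widetilde Q$; this exists because $A$ is bounded. Put $\mu=(4\Lambda)^{-1}$ and $\varphi(x)=\mu|x|^2$. Then
\beqs
(A\nabla\varphi)\cdot\nabla\varphi\le 4\Lambda\mu^2|x|^2=\varphi \quad\text{and}\quad \langle A,D^2\varphi\rangle=2\mu{\rm Tr}(A)\ge 2\mu n c_0 ,
\eeqs
where the second inequality uses \eqref{Aelip}. Define
\beqs
Z(x,t)=(t+1)^{-\beta}e^{-\varphi(x)/(t+1)}\quad\text{with } \beta=2\mu n c_0=\frac{nc_0}{2\Lambda}.
\eeqs
The same computation as in \eqref{LW}, with $t$ replaced by $t+1$ and $b=0$, gives
\beqs
L_0Z=\frac{Z}{(t+1)^2}\Big\{(t+1)\big(-\beta+\langle A,D^2\varphi\rangle\big)+\varphi-(A\nabla\varphi)\cdot\nabla\varphi\Big\}\ge 0 \quad\text{in }\widetilde Q .
\eeqs
Both terms in the braces are nonnegative by the two inequalities above. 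Moreover $Z>0$, $Z\in C^\infty(\R^n\times(-1,\infty))$, and $Z\le (t+1)^{-\beta}$ everywhere.

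\emph{Comparison.} Let $M=\max_{\overline Q_0}u^+$. Since $\overline Q_0$ is compact by \eqref{OKbQ}, the constant $C_0=\max\{1,\max_{x\in \mathbb P_1(\overline Q_0)}e^{\varphi(x)}\}$ is finite, and it depends only on $Q$ and $\Lambda$. Set $v=u-C_0MZ$. Then $v\in C(\overline Q)\cap C^{2,1}_{x,t}(\widetilde Q)$ and $L_0v\le 0$ in $\widetilde Q$. On $\overline Q_0$ we have $C_0MZ=C_0Me^{-\varphi}\ge M\ge u$, so $v\le 0$ there. On $\mathcal S_{(0,t]}\subset \Gamma(Q)\setminus\overline Q_0$ we have $u\le 0< C_0MZ$, so $v\le 0$ there too. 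Applying Theorem \ref{maxcor} with $t_1=0$ and $t_2=t$, which is legitimate since $\mathbb P_2(\overline Q)=[0,\infty)$, gives $\max_{\overline Q_t}v^+\le\max_{\overline Q_0}v^+=0$. Hence $u\le C_0MZ\le C_0M(t+1)^{-\beta}$ on $\overline Q_t$, which is \eqref{beta-est}; the case $t=0$ is trivial.

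\emph{Main obstacle.} The delicate point is choosing the barrier so that its supersolution property does not depend on the geometry of $Q$. A Growth-Lemma iteration would require the diameters of $\mathbb P_1(\overline Q_{[t,t+T]})$ to be controlled by $\sqrt T$, and no such control is assumed here. The centered Gaussian avoids this. The only delicate inequality, $(A\nabla\varphi)\cdot\nabla\varphi\le\varphi$, is exactly where the \emph{upper} bound on $A$ is needed, through the choice $\mu=1/(4\Lambda)$. The lower bound on ${\rm Tr}(A)$ from ellipticity then fixes the admissible decay exponent $\beta=nc_0/(2\Lambda)$. I would double-check two remaining points: the time-shift by $1$, which makes $Z$ continuous up to $t=0$, and the left-sided time derivative on $\gamma(Q)$ from Remark \ref{rmk1}, which is harmless because $Z$ is smooth.
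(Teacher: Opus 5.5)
Your proposal is correct and is essentially the paper's own proof: the same time-shifted Gaussian barrier $(t+1)^{-\beta}e^{-\mu|x|^2/(t+1)}$ with $\mu=1/(4\sup\|A\|_{\rm op})$ and $\beta=2nc_0\mu$, the same constant $C_0=e^{\mu R^2}$ with $R=\max\{|x|:x\in\mathbb P_1(\overline{Q}_0)\}$, and the same comparison through Theorem \ref{maxcor} with $t_1=0$. One small correction: on $\mathcal S_{(0,t]}$ the strict inequality $0<C_0MZ$ fails when $M=0$, but $v\le 0$ still holds there, so the argument is unaffected.
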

\begin{proof}
Let
\beq\label{mucond1}
M_*=\sup_{(x,t)\in \widetilde Q} \|A(x,t)\|_{\rm op}, \  
\mu =\frac1{4M_*} \text{ and }
\beta= 2nc_0\mu=\frac{nc_0}{2M_*}, 
\eeq
 where  $\|\cdot\|_{\rm op}$ denotes the norm of a bounded linear mapping.
Then $M_*$, $\mu$, $\beta$ are positive numbers.
Define 
\beqs
    W(x,t)=(t+1)^{-\beta}e^{-\mu\frac{|x|^2}{t+1}}
    \text{ for } (x,t)\in \R^n\times[0,\infty).
\eeqs
Then $W\in C^\infty (\R^n\times[0,\infty))$.
Using similar calculations to \eqref{LW} with $b=0$ and $\varphi(x)=\mu |x|^2$, we have
\beqs
   L_0 W(x,t)
= \frac{W_{\beta,\mu}(x,t)}{(t+1)^2}
   \left\{(t+1)\left[2 \mu{\rm Tr}(A(x,t))-\beta\right]
+\mu\left[|x|^2-4\mu x^{\rm T}A(x,t)x\right]\right\}.
\eeqs 
Because  ${\rm Tr}(A)\ge n c_0$, thanks to \eqref{Aelip}, and 
$x^{\rm T}Ax\le M_* |x|^2$, we have from the choice of $\mu$ and $\beta$ in \eqref{mucond1} that
 \beq \label{superW}
 L_0  W(x,t)\geq 0 \text{ in }\widetilde Q.
 \eeq
Let $R=\max\{|x|:x\in \mathbb P_1(\overline Q_0)\}$ and define
\beq\label{vWdef} 
v(x,t)=C_0\left (\max_{\overline{Q}_0} u^+ \right)W(x,t), 
\text{ where } 
C_0=\exp(\mu  R^2).
\eeq 
By \eqref{superW}, we have $Lv\ge 0\ge Lu$ in $\widetilde Q$. Moreover,  $v\ge 0\ge u$ on $\Gamma(Q)\setminus \overline{Q}_0$, and, for $(x,0)\in \overline{Q}_0$, 
\beq\label{uv0}
v(x,0)
=C_0  e^{-\mu|x|^2}\max_{\overline{Q}_0} u^+
\ge C_0  e^{-\mu R^2}\max_{\overline{Q}_0} u^+
= \max_{\overline{Q}_0} u^+\geq u(x,0).
\eeq
Let $w=u-v$. Then $Lw\le 0$ in $\widetilde Q$ and $w\le 0$ on $\Gamma(Q)
$.
Applying Theorem \ref{maxcor} and using \eqref{uv0} give, for any $t\ge 0$,
\beqs
\max_{\overline{Q}_t} w^+\le \max_{\overline{Q}_0} w^+=0. 
\eeqs
Thus,  $w^+(x,t)=0$ in $\overline Q$, which implies 
\beqs
u(x,t)\le v(x,t)\le C_0\left (\max_{\overline{Q}_0} u^+ \right) (t+1)^{-\beta} \text{ in }\overline Q.
\eeqs
(The last inequality comes from the use of the simple inequality $W(x,t)\le (t+1)^{-\beta}$  to estimate $v(x,t)$ given in \eqref{vWdef}.)
Therefore, we obtain  \eqref{beta-est}.
\end{proof}

Note that $\beta>0$ in the above Theorem \ref{STthm0} can be very small.
In order to  obtain better decaying estimates than \eqref{beta-est}, we will utilize the Growth Lemma -- Lemma \ref{stgrowth}.
We will impose a condition on $Q$ of the following form 
\beq\label{diamcond}
{\rm diam} \left(\mathbb P_1\left(\bar  Q_{[\Phi(t),t]}\right)\right)\le  \psi(t)
\text{ for all }t\ge T,
\eeq 
where $T\ge 0$ is some number,  $0\le \Phi(t)\le t$ and $\psi(t)\ge 0$. 
The condition \eqref{diamcond}  means that, for each $t$, the diameter of $\overline{Q}_\tau$ can be controlled by $\psi(t)$ uniformly in $\tau\in[\Phi(t),t]$.

If $Q$ satisfies 
\beq \label{dincrease}
Q_t\subset Q_s \text{ for }t<s,
\eeq
then  \eqref{diamcond} becomes
\beq\label{dcond2}
{\rm diam}\left(\overline{Q}_t\right)\le  \psi(t). 
\eeq 
Even more general than \eqref{dincrease} and \eqref{dcond2} is the following situation. If there is a family of sets $\mathcal Q_t$ in $\R^{n+1}$, for $t\ge T$, such that 
\beq\label{bigQ}
Q_t\subset \mathcal Q_t \text{ for all $t$, and } \mathcal Q_t \subset \mathcal Q_s \text{ for all $t<s$,}
\eeq
then  a sufficient condition for  \eqref{diamcond} is
\beq\label{dcmQ}
{\rm diam}\left(\overline{\mathcal{Q}}_t\right)\le  \psi(t). 
\eeq 

\subsection{Iteration scheme}\label{scheme}
We describe the general scheme for obtaining estimates for a subsolution $u$ of $L$.
We fix an appropriate number $T_0\ge T$, choose $\tau_k>0$ for $k\ge 1$, and set
\beq \label{Tksum}
T_k=T_0+\sum_{j=1}^k \tau_j.
\eeq 
For $k\ge 1$, let 
   \begin{align}\label{Vdk}
   V_k&=\mathbb P_1 (\bar  Q_{[T_{k-1},T_k]}),\quad d_k={\rm diam}(V_k),\\
   \label{rRk0}
r_k&>0\text{  and } R_k=r_k+d_k.
    \end{align}
By the virtue of Lemma \ref{choicey}, there is $x_k\in \R^n\setminus V_k$ such that
\beq \label{xchoice}
{\rm dist}(x_k,V_k)=r_k\text{ and }R_*(x_k,V_k)=R_k.
\eeq 

Under Assumption \ref{secondA}, assume, for $k\ge 1$, that there is a constant $m_k\ge 0$ so that
\beq\label{TABk}
 |b(x,t)|\le m_k \text{ on }\widetilde Q_{(T_{k-1},T_k]}.
\eeq
Taking into account the formulas in \eqref{sTe1} together with the choice \eqref{xchoice} and the bound \eqref{TABk}, we define
\beq\label{sTek0}
 \beta_k=\frac{1}{2c_0}\max\left \{M_1+m_k R_k,\frac{R_k^2}{2\tau_k}\right\}, \quad 
 \eta_k=1-\left(\frac{r_k}{R_k}\right)^{2\beta_k}=1-\left(\frac{1}{1+d_k/r_k}\right)^{2\beta_k}.
 \eeq
 
 Suppose $u\in C(\overline{Q})\cap C_{x,t}^{2,1}(\widetilde Q)$ satisfies $L u\le 0$ in $\widetilde Q$ and $u\le 0$ on $\Gamma(Q)\setminus \overline{Q}_0$. 
 Given an integer $k\ge 1$, let $E:=Q_{(T_{k-1},T_k)}$. By Lemma \ref{rmk2}\ref{pd}, the sets $\overline{E}_{T_{k-1}}$ and $\overline{E}_{T_{k}}$ are non-empty.
  Applying Lemma \ref{stgrowth} to the interval $I:=[T_{k-1},T_k]$, we have from \eqref{Tgrow3} that
 \beq \label{Tgrowk}
\max_{\overline{Q}_{T_{k}}} u^+\le \eta_k \max_{\overline{Q}_{T_{k-1}}} u^+.
\eeq
Iterating inequality \eqref{Tgrowk} gives, for any $k>k_*\ge 0$,
 \beq \label{ugk0}
\max_{\overline{Q}_{T_{k}}} u^+\le \eta_{k_*+1}\eta_{k_*+2}\ldots \eta_k \max_{\overline{Q}_{T_{k_*}}} u^+.
\eeq
In particular, when $k_*=0$ and $k\ge 1$ one has
 \beq \label{ugrowk}
\max_{\overline{Q}_{T_{k}}} u^+\le \eta_1\eta_2\ldots \eta_k \max_{\overline{Q}_{T_{0}}} u^+.
\eeq

More specific estimates of $d_k$, $R_k$ and $\beta_k$, $\eta_k$ will be obtained thanks to \eqref{diamcond} and later choices of $\tau_k$ and $r_k$.
They will result in more specific estimates in \eqref{ugrowk}, and later for 
$\max_{\overline{Q}_t} u^+$ with arbitrary real number $t\ge T$.
Below we investigate the  zero drift case and general bounded drift case separately.

\subsection{Zero drift}\label{HZsec}
We focus on the case of zero drift and obtain faster decaying estimates than \eqref{beta-est}. 
However, it requires some restriction on the growth of the spatial diameters, when $t\to\infty$, as mentioned in \eqref{diamcond}.

\begin{theorem}\label{STthm1}
Under Assumption \ref{secondA}, suppose $u\in C(\overline{Q})\cap C_{x,t}^{2,1}(\widetilde Q)$ satisfies $L_0 u\le 0$ in $\widetilde Q$ and $u\le 0$ on $\Gamma(Q)\setminus \overline{Q}_0$. 
\begin{enumerate}[label=\tnum]
    \item\label{st4i} Given $p\in[0,1/2)$, assume there exist numbers $\delta_*>0$, $T\ge \delta_*^\frac{1}{1-2p}$ and $K>0$ such that 
    \beq\label{dcond3}
{\rm diam}\left(\mathbb P_1\left(\overline{Q}_{\left[t-\delta_* t^{2p},t\right]}\right)\right)\le  K t^p \text{ for all }t\ge T.
\eeq 
Then
\beq\label{udecay2}
\max_{\overline{Q}_t}u^+\le C\left(\max_{\overline{Q}_T}u^+\right) \exp\left(-\nu t^{1-2p}\right) \text{ for all }t\ge T, 
\eeq
where $C$ and $\nu$ are positive constants independent  of $u$.
    \item\label{st4ii} Assume  there exist $\delta_*\in(0,1)$, $T>0$ and $K>0$ such that
    \beq\label{dcond4}
{\rm diam}\left(\mathbb P_1\left(\bar  Q_{[\delta_* t,t]}\right)\right)\le  K t^{1/2} \text{ for all }t\ge T.
\eeq 
Then
\beq\label{udecay3}
\max_{\overline{Q}_t}u^+\le \left(\max_{\overline{Q}_T}u^+\right) (t/\mu)^{-\theta} \text{ for all }t\ge T, 
\eeq
where $\mu>0$ depends on $T$ and $\delta_*$, while $\theta>0$  depends on $K$, $c_0$, $M_1$, $T$, $\delta_*$.
If the last four parameters are fixed, then 
$\theta=\theta(K)$
satisfies
\beq \label{thelim}
\lim_{K\to 0} \theta(K)=\infty.
\eeq 
More specifically, there is $K_0\in(0,1)$ depending on $c_0$, $M_1$, $T$ and $\delta_*$ such that if $K\le K_0$ then 
\beq\label{thest}
\theta(K)\ge \frac{\ln K}{4\ln \delta_*}.
\eeq 
\end{enumerate}
\end{theorem}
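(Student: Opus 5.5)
The plan is to run the iteration scheme of Subsection~\ref{scheme} with $b\equiv 0$, so $m_k=0$ in \eqref{TABk}. The time steps $\tau_k$ and radii $r_k$ are chosen so that two things hold. First, each window $[T_{k-1},T_k]$ lies inside the window controlled by the diameter hypothesis. Second, the resulting $\beta_k$ in \eqref{sTek0} are bounded uniformly in $k$ and $r_k/R_k$ is bounded below. Then $\eta_k\le\bar\eta<1$ for all $k$, and \eqref{ugrowk} gives $\max_{\overline{Q}_{T_k}}u^+\le \bar\eta^{\,k}\max_{\overline{Q}_T}u^+$. To pass from the discrete times $T_k$ to arbitrary $t\ge T$, pick $k$ with $T_k\le t<T_{k+1}$ and apply Theorem~\ref{maxcor} on $[T_k,t]$; its hypotheses hold because $u\le0$ on $\Gamma(Q)\setminus\overline Q_0$. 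It then remains to count how many steps fit below $t$, which requires $T_k\to\infty$.

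\emph{Part \ref{st4i}.} Take $T_0=T$ and $T_k=T_{k-1}+\delta_*T_{k-1}^{2p}$, i.e.\ $\tau_k=\delta_*T_{k-1}^{2p}$.
\begin{itemize}
\item Window containment: since $T_k^{2p}\ge T_{k-1}^{2p}$, we get $T_k-\delta_*T_k^{2p}\le T_{k-1}$, so $[T_{k-1},T_k]\subset[T_k-\delta_*T_k^{2p},T_k]$. Hence \eqref{dcond3} gives $d_k\le KT_k^p$.
\item Choice of $r_k$: set $r_k=KT_k^p$, so $R_k\le 2KT_k^p$ and $r_k/R_k\ge1/2$.
\item Uniform bound on $\beta_k$: the hypothesis $T\ge\delta_*^{1/(1-2p)}$ gives $T_k/T_{k-1}\le 1+\delta_*T_{k-1}^{2p-1}\le 2$. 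Therefore $R_k^2/(2\tau_k)\le 2K^2(T_k/T_{k-1})^{2p}/\delta_*\le 4K^2/\delta_*$, and $\beta_k\le\bar\beta:=\frac1{2c_0}\max\{M_1,4K^2/\delta_*\}$.
\item Hence $\eta_k\le\bar\eta:=1-4^{-\bar\beta}<1$ for every $k$.
\item Counting steps: by concavity of $s\mapsto s^{1-2p}$,
\[
T_k^{1-2p}-T_{k-1}^{1-2p}\le(1-2p)T_{k-1}^{-2p}\tau_k=(1-2p)\delta_*,
\]
so $k\ge (T_k^{1-2p}-T^{1-2p})/((1-2p)\delta_*)$. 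The increments $\tau_k\ge\delta_*T^{2p}>0$ force $T_k\to\infty$.
\item Conclusion: for $T_k\le t<T_{k+1}$ we have $t^{1-2p}\le T_{k+1}^{1-2p}\le T_k^{1-2p}+(1-2p)\delta_*$. Combining this with $\bar\eta^{\,k}$ gives \eqref{udecay2} with $\nu=\ln(1/\bar\eta)/((1-2p)\delta_*)$ and a constant $C$ that absorbs the one extra step and the $T^{1-2p}$ term.
\end{itemize}

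\emph{Part \ref{st4ii}.} Use geometric times $T_k=T\delta_*^{-k}$, so $T_{k-1}=\delta_*T_k$ and $\tau_k=(1-\delta_*)T_k$.
\begin{itemize}
\item Window containment: $[T_{k-1},T_k]=[\delta_*T_k,T_k]$, so \eqref{dcond4} gives $d_k\le KT_k^{1/2}$.
\item Choice of $r_k$: choose the $K$-independent scale $r_k=T_k^{1/2}$. Then $R_k\le(1+K)T_k^{1/2}$ and, for $K\le1$, $\beta_k\le\bar\beta:=\frac1{2c_0}\max\{M_1,2/(1-\delta_*)\}$, which is independent of $K$ and $k$.
\item Hence $\eta_k\le\eta(K):=1-(1+K)^{-2\bar\beta}\le 2\bar\beta K$.
\item Decay at the grid times: $\max_{\overline Q_{T_k}}u^+\le\eta^k\max_{\overline Q_T}u^+=(T_k/T)^{-\theta}\max_{\overline Q_T}u^+$, where $\theta=\ln(1/\eta)/\ln(1/\delta_*)$.
\item Arbitrary $t$: for $T_k\le t<T_{k+1}$ we have $T_k\ge\delta_*t$, so monotonicity gives \eqref{udecay3} with $\mu=T/\delta_*$.
\item Dependence on $K$: since $\eta(K)\to0$ as $K\to0$, we get \eqref{thelim}. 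For \eqref{thest} it suffices that $\eta(K)\le K^{1/4}$, which follows from $2\bar\beta K\le K^{1/4}$ once $K\le K_0:=\min\{1,(2\bar\beta)^{-4/3}\}$.
\end{itemize}

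The main obstacle is the bookkeeping in part \ref{st4i}. One must choose $\tau_k$ as a function of $T_{k-1}$, not $T_k$, so that the window inclusion holds. One must also use the lower bound on $T$ to keep $T_k/T_{k-1}$ bounded, which is what makes $\bar\beta$ uniform. Part \ref{st4ii} is comparatively straightforward once $r_k$ is taken on the natural parabolic scale $T_k^{1/2}$ rather than proportional to $d_k$.
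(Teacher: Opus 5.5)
Your proof is correct. It follows the paper's overall strategy: iterate the Growth Lemma on windows $[T_{k-1},T_k]$ with $\beta_k$ bounded uniformly, then handle intermediate times with Theorem \ref{maxcor}. Your choices of $\tau_k$ and $r_k$ differ from the paper's in useful ways.

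\textbf{Part (i).} The paper fixes $\tau_k=\delta_0k^\sigma$, with auxiliary constants from \eqref{sigdef}--\eqref{d0def}. It relates $k$ to $T_k$ through the integral comparisons \eqref{tk1}--\eqref{tau01} and takes $r_k=d_k$. Your adaptive recursion $\tau_k=\delta_*T_{k-1}^{2p}$ gives the window inclusion immediately. The bound $T_k\le 2T_{k-1}$, which comes from $T\ge\delta_*^{1/(1-2p)}$, controls $R_k^2/(2\tau_k)$. Concavity of $s\mapsto s^{1-2p}$ then replaces the integral bookkeeping.

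\textbf{Part (ii).} The paper uses $\tau_k=\lambda^k$, which forces $T_0\ge\lambda/(\lambda-1)$ and the shift $t-T_0$. It also takes $r_k=K^{-3/2}d_k$, so $\bar\beta\sim 1/K$, and \eqref{exineq} then only yields $\bar\eta\lesssim\sqrt K$. Your geometric grid $T_k=T\delta_*^{-k}$ makes $[T_{k-1},T_k]=[\delta_*T_k,T_k]$ exact and gives the simpler $\mu=T/\delta_*$. Your $K$-independent parabolic scale $r_k=T_k^{1/2}$ keeps $\bar\beta$ independent of $K$. Hence $\eta\le 2\bar\beta K$ and $\theta(K)\ge \ln\bigl(1/(2\bar\beta K)\bigr)/\ln(1/\delta_*)$, which is asymptotically four times the bound \eqref{thest} as $K\to0$.

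\textbf{What the paper's route buys.} Its $T_k$ depend explicitly on $k$, with the relations \eqref{Tkk0}. These are recycled in Case 3 of Theorem \ref{STthm3} and in Theorem \ref{NHthm}. There $\eta_k$ is no longer bounded away from $1$ uniformly, so products and sums over $k$ must be compared with integrals in $t$.

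\textbf{Two cosmetic fixes.}
\begin{enumerate}
\item For $K>1$, your $\bar\beta$ must be allowed to depend on $K$: replace $2/(1-\delta_*)$ by $(1+K)^2/(2(1-\delta_*))$. This is harmless, since \eqref{thelim} and \eqref{thest} only concern small $K$.
\item Your $K_0=\min\{1,(2\bar\beta)^{-4/3}\}$ can equal $1$, for example when $n=1$ and $M_1=c_0\ge 2/(1-\delta_*)$. Take $K_0=\min\{1/2,(2\bar\beta)^{-4/3}\}$ to get $K_0\in(0,1)$ as stated.
\end{enumerate}
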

\begin{proof}
For $k\ge 1$, let 
   \beq
   \label{rRk}
r_k=Nd_k \text{ for some positive number $N$. }
    \eeq
In this case, $m_*=0$ in \eqref{TAB3} and $m_k=0$ in \eqref{TABk}. Hence \eqref{sTek0} becomes
\beq\label{bez}
 \beta_k=\frac{1}{2c_0}\max\left \{M_1,\frac{R_k^2}{2\tau_k}\right\},\quad 
  \eta_k=1-\left(\frac{1}{1+1/N}\right)^{2\beta_k}.
\eeq

\medskip
\ref{st4i} 
Given $p\in[0,1/2)$, set 
    \beq\label{sigdef}
    \sigma=\frac{2p}{1-2p}>0 \text{ and } 
\delta_0=[\delta_*(1-2p)^{2p}]^\frac{1}{1-2p}>0.
\eeq
Then we have
\beq\label{d0def}
2p(\sigma+1)=\sigma \text{ and }  \delta_*=\delta_0^\frac1{\sigma+1} (\sigma+1)^\frac\sigma{\sigma+1} .
\eeq

Take 
\beq \label{Ttau}
T_0=T\text{ and   }\tau_k=\delta_0 k^\sigma\text{ for }k\ge 1.
\eeq 
From \eqref{Tksum}, we estimate $T_k=T_0+ \delta_0  \sum_{j=1}^kj^\sigma $, for $k\ge 1$, from above and below  by
\begin{align*}
T_k&\le T_0+\delta_0  \sum_{j=1}^k \int_j^{j+1} \tau^\sigma\d\tau = T_0+\delta_0 \int_{1}^{k+1} \tau^\sigma\d\tau,\\
T_k&\ge T_0+\delta_0  \sum_{j=1}^k \int_{j-1}^{j} \tau^\sigma\d\tau = T_0+\delta_0 \int_{0}^{k} \tau^\sigma\d\tau.
\end{align*}
Correspondingly, we obtain
\begin{align}\label{tk1}
T_k&\le T_0+\frac{\delta_0 }{\sigma+1}(k+1)^{\sigma+1},\\
\label{tk2}
T_k&\ge T_0+\frac{\delta_0 }{\sigma+1}k^{\sigma+1}.
\end{align}
Note that \eqref{tk1} and \eqref{tk2} also hold for $k=0$.
Thus,  it follows from \eqref{tk1} and \eqref{tk2} that
\beq\label{Tkk0}
\left[\frac{\sigma+1}{\delta_0}(T_k-T_0)\right]^\frac1{\sigma+1} -1 \le k\le \left[\frac{\sigma+1}{\delta_0}(T_k-T_0)\right]^\frac1{\sigma+1}
\text{ for all integer } k\ge 0.
\eeq
Using the second inequality in \eqref{Tkk0} and then \eqref{sigdef}, \eqref{d0def}, we have, for all integer $k\ge 1$, 
\beq\label{tau01}
\tau_k=\delta_0 k^\sigma \le \delta_0 \left(\frac{\sigma+1}{\delta_0}T_k\right)^\frac\sigma{\sigma+1}
=\delta_0^\frac1{\sigma+1} (\sigma+1)^\frac\sigma{\sigma+1} T_k^\frac\sigma{\sigma+1}.
\eeq

Now, utilizing specific values in \eqref{d0def}, we have from \eqref{tau01}, for $k\ge 1$, 
\beqs %\label{tau1}
\tau_k\le \delta_* T_k^{2p}.
\eeqs
This implies 
\beq\label{tau2}
[T_{k-1},T_k]=[T_k-\tau_k,T_k]\subset [T_k-\delta_* T_k^{2p},T_k].
\eeq
Since $T_k\ge T\ge \delta_*^\frac1{1-2p}$ we have $T_k-\delta_* T_k^{2p}\ge 0$.
To estimate $d_k$ in \eqref{Vdk}, we use \eqref{tau2},  \eqref{dcond3} and \eqref{tk1} to obtain, for $k\ge 1$,
\begin{align*}
d_k&\le {\rm diam}\left(\bar  Q_{[T_k-\delta_* T_k^{2p},T_k]}\right)
\le K T_k^p
\le K\left[T_0+\frac{\delta_0}{\sigma+1}(k+1)^{\sigma+1}\right]^p\\
&\le K\left[\left(1+\frac{\delta_0}{\sigma+1}\right)(T_0+k+1)^{\sigma+1}\right]^p 
= K\left(1+\frac{\delta_0}{\sigma+1}\right)^p(k+T_0+1)^{(\sigma+1)p}.
\end{align*} 
With $R_k$ in \eqref{rRk0}, selecting $N=1$ in \eqref{rRk} yields
\beqs 
R_k=2d_k\le 2K\left(1+\frac{\delta_0}{\sigma+1}\right)^p(k+T_0+1)^{(\sigma+1)p}.
\eeqs 
It follows from this estimate of $R_k$, the choice of $\tau_k$ in \eqref{Ttau}  and the first identity in \eqref{d0def} that 
\begin{align*}
    \frac{R_k^2}{2\tau_k}&\le \frac{4K^2\left(1+\frac{\delta_0}{\sigma+1}\right)^{2p}(k+T_0+1)^{2p(\sigma+1)}}{2\delta_0 k^\sigma}
    = \frac{2K^2}{\delta_0}\left(1+\frac{\delta_0}{\sigma+1}\right)^{2p}\frac{(k+T_0+1)^\sigma}{k^\sigma}\\
    &=\frac{2K^2}{\delta_0}\left(1+\frac{\delta_0}{\sigma+1}\right)^{2p}\left(1+\frac{T_0+1}{k}\right)^\sigma.
    \end{align*}
With $k\ge 1$, this yields
\beq\label{R2t0}
    \frac{R_k^2}{2\tau_k}
    \le c_1\eqdef \frac{2K^2}{\delta_0}\left(1+\frac{\delta_0}{\sigma+1}\right)^{2p}(T_0+2)^\sigma.
\eeq
Combining \eqref{R2t0} with the definitions of $\beta_k$ and $\eta_k$ in \eqref{bez}, we have
\beq\label{the1}
\beta_k\le \bar\beta\eqdef \frac{1}{2c_0}\max\left\{M_1,c_1\right\}\text{ and }
    \eta_k\le \bar\eta\eqdef 1-(1/2)^{2\bar\beta}.
\eeq
By \eqref{ugrowk} and \eqref{the1},
    \beq\label{exkdec}
    \max_{\overline{Q}_{T_k}} u^+\le \left (\max_{\overline{Q}_{T_0}} u^+\right) \bar \eta^k=\left (\max_{\overline{Q}_{T_0}} u^+\right)  e^{-\nu_0 k},\text{ where } \nu_0=-\ln \bar \eta >0.
    \eeq
    Clearly, \eqref{exkdec} holds also for $k=0$.
    
    Consider $t\ge T_0$. Then there is an integer $k\ge 1$ such that $t\in[T_{k-1},T_{k})$. 
By Theorem \ref{maxcor}, estimate \eqref{exkdec} for $k:=k-1\ge 0$, and the first inequality in  \eqref{Tkk0}, we have
    \begin{align*}
    \max_{\overline{Q}_{t}} u^+
    &\le \max_{\overline{Q}_{T_{k-1}}} u^+
    \le \left( \max_{\overline{Q}_{T_0}} u^+\right)e^{-\nu_0 (k-1)}
    =\left( \max_{\overline{Q}_{T_0}} u^+\right)e^{2\nu_0 }e^{-\nu_0 (k+1)} \\
    &\le \left( \max_{\overline{Q}_{T_0}} u^+\right)e^{2\nu_0 } \exp\left\{ -\nu_0\left[\frac{\sigma+1}{\delta_0}(T_{k}-T_0)\right]^\frac1{\sigma+1}\right\},
    \end{align*}
    thus,
       \beq\label{uQt0}
    \max_{\overline{Q}_{t}} u^+
\le \left( \max_{\overline{Q}_{T_0}} u^+\right)e^{2\nu_0 } \exp\left\{ -\nu(t-T_0)^\frac1{\sigma+1} \right\},
\text{    where }
\nu=\nu_0\left(\frac{\sigma+1}{\delta_0}\right)^\frac1{\sigma+1}.
    \eeq 
Because $1/(\sigma+1)\in(0,1)$ and $t\ge T_0$,  we have
    \beq\label{tT0}
    (t-T_0)^\frac1{\sigma+1}\ge t^\frac1{\sigma+1}-T_0^\frac1{\sigma+1}.
    \eeq
Combining \eqref{uQt0} with \eqref{tT0} yields
    \beq\label{uQtest}
    \max_{\overline{Q}_{t}} u^+\le C\left( \max_{\overline{Q}_{T_0}} u^+\right)\exp\left(-\nu t^\frac1{\sigma+1}\right),
   \text{ where }C=\exp(\nu T_0^\frac1{\sigma+1}+2\nu_0).
    \eeq
   Noting that $1/(\sigma+1)=1-2p$ and recalling $T_0=T$,  we obtain \eqref{udecay2} from \eqref{uQtest}.

\medskip    
\ref{st4ii}
Set $\lambda=1/\delta_*>1$. Take 
\beq \label{Ttau2}
T_0=\max\{T, \lambda/(\lambda-1)\}>0\text{ and }\tau_k= \lambda^k \text{ for }k\ge 1.
\eeq 
Then $T_k$ in \eqref{Tksum} becomes
\beq\label{tk3}
T_k=T_0+ \frac{\lambda^{k+1}-\lambda}{\lambda-1}\text{ which gives }
\lambda^{k+1}=(T_k-T_0)(\lambda-1) +\lambda.
\eeq
Notice that $T_0 (\lambda-1)\ge \lambda$.
We have from the last identity in \eqref{tk3} that
\beq\label{tk5}
(T_k-T_0)(\lambda-1) \le \lambda^{k+1}\le T_k(\lambda-1) .
\eeq
The last inequality yields $\lambda^k\le T_k(\lambda-1)/\lambda$, which we use to estimate
$$T_{k-1}=T_k- \lambda^k\ge T_k-T_k (\lambda-1)/\lambda=T_k/\lambda=\delta_* T_k.$$ 
Therefore, 
\beq \label{tau3}
[T_{k-1},T_k]\subset [\delta_* T_k,T_k].
\eeq 
From \eqref{Vdk}, \eqref{tau3} and \eqref{dcond4}, we have 
\beqs
d_k\le {\rm diam}\left(\mathbb P_1\left(\overline{Q}_{[\delta_* T_k,T_k]}\right)\right)
\le K T_k^{1/2}.
\eeqs 
Then we can estimate $R_k$ from \eqref{rRk0} and \eqref{rRk} by
\beq \label{Rk1}
R_k=(1+N)d_k\le (1+N)K T_k^{1/2}.
\eeq 
Combining \eqref{Rk1} with the formulas of $\tau_k$ and $T_k$ in \eqref{tk3} and \eqref{Ttau}, respectively, yields
    \beq\label{prec2}
    \frac{R_k^2}{2\tau_k}
    \le \frac{(1+N)^2K^2T_k}{2\lambda^k}
    = \frac{(1+N)^2K^2\left(T_0+\frac{\lambda^{k+1}-\lambda}{\lambda-1}\right)}{2\lambda^k}
    \le \frac12(1+N)^2K^2\left (\frac{T_0}{\lambda^k}+\frac{\lambda}{\lambda-1}\right).
    \eeq
Simply using $\lambda^k\ge 1$, we obtain
    \beq\label{c2e}
    \frac{R_k^2}{2\tau_k}
        \le c_2\eqdef \frac12(1+N)^2K^2\left (T_0+\frac{\lambda}{\lambda-1}\right).
    \eeq 
We have from this upper bound \eqref{c2e} of $R_k^2/(2\tau_k)$ and \eqref{bez} that
\beq\label{the2}
\beta_k\le \bar\beta\eqdef \frac{1}{2c_0}\max\left\{M_1,c_2\right\},\quad 
    \eta_k\le \bar\eta\eqdef 1-\left(\frac{1}{1+1/N}\right)^{2\bar\beta}\in(0,1).
    \eeq
By \eqref{ugrowk} again
    \beq\label{exkdec7}
    \max_{\overline{Q}_{T_k}} u^+\le \left (\max_{\overline{Q}_{T_0}} u^+\right) \bar \eta^k=\left (\max_{\overline{Q}_{T_0}} u^+\right)  \lambda^{-\nu k},\text{ where } \nu=\frac{\ln(1/\bar \eta)}{\ln \lambda}>0.
    \eeq
    Obviously, inequality \eqref{exkdec7} holds also for $k=0$.
    
For $t> T_0$, there is an integer $k\ge 1$ such that $t\in(T_{k-1},T_{k}]$. By Theorem  \ref{maxcor}, estimate \eqref{exkdec7} for $k:=k-1\ge 0$ and the first inequality in  \eqref{tk5}, we obtain
    \begin{align*}
    \max_{\overline{Q}_{t}} u^+
    &\le \max_{\overline{Q}_{T_{k-1}}} u^+ \le \left (\max_{\overline{Q}_{T_0}} u^+\right)\lambda^{-\nu (k-1)}
    =\left (\max_{\overline{Q}_{T_0}} u^+\right)\lambda^{2\nu}\lambda^{-\nu (k+1)}\\
    &\le  \left (\max_{\overline{Q}_{T_0}} u^+\right)\lambda^{2\nu}\left\{ (T_{k}-T_0)(\lambda-1)\right\}^{-\nu}.
    \end{align*}
With $T_k\ge t>T_0$, it follows that
\beq\label{uQ0}
    \max_{\overline{Q}_{t}} u^+
    \le \left (\max_{\overline{Q}_{T_0}} u^+\right)\lambda^{2\nu}\left\{ (t-T_0)(\lambda-1)\right\}^{-\nu}.
\eeq
 Let 
 \beq\label{Tstar}
 T_*=2T_0=2\max\left\{T,\frac{\lambda}{\lambda-1}\right\}.
 \eeq
 For $t\ge T_*>T_0$,  we have $t-T_0\ge t/2$, and, hence, by \eqref{uQ0} and the fact  $T_0\ge T$ in \eqref{Ttau2} and Theorem \ref{maxcor},  
    \begin{equation}\label{uQ2}
    \max_{\overline{Q}_{t}} u^+
    \le   \left(\max_{\overline{Q}_{T_0}} u^+\right)\lambda^{2\nu}\left(\frac{t(\lambda-1)}{2}\right)^{-\nu}
    \le      \left(\max_{\overline{Q}_{T}} u^+\right) \left(\frac{2\lambda^2}{\lambda-1}\right)^{\nu}t^{-\nu}.
    \end{equation}
For $t\in[T,T_*]$, we have, by Theorem \ref{maxcor} again, 
   \begin{equation}\label{uQ3}
    \max_{\overline{Q}_{t}} u^+
    \le   \left(\max_{\overline{Q}_{T}} u^+\right)
    \le   \left(\max_{\overline{Q}_{T}} u^+\right) t^{-\nu} T_*^{\nu}.
    \end{equation}

Take $N=K^{-3/2}$ now. From \eqref{uQ2} and \eqref{uQ3}, we obtain inequality \eqref{udecay3} where, recalling $\nu$ defined in \eqref{exkdec7},  and $T_*$ defined by \eqref{Ttau2} and \eqref{Tstar},
\beq\label{themu}
    \theta=\nu=-\frac{\ln \bar \eta}{\ln \lambda} \text{  and } \mu =\max\left\{\frac{2\lambda^2}{\lambda-1},T_*\right\}=2\max\left\{T,\frac{\lambda^2}{\lambda-1}\right\}.
\eeq 

Next, we prove \eqref{thelim}. Consider $K\in(0,1)$. We have 
\beq \label{NK1}
(1+N)K>NK = 1/\sqrt K\ge 1.
\eeq 
We further estimate $\bar\beta$ and $\bar\eta$ in \eqref{the2} by
\begin{align}\label{be2}
\bar\beta&\le \frac1{2c_0}(M_1+c_2)\le M(1+N)^2 K^2,\text{ where }M=\frac{M_1+[T_0+\lambda/(\lambda-1)]/2}{2c_0},\\
\label{ebar}
    \bar\eta&\le 1-\left(\frac{1}{1+1/N}\right)^{2M(1+N)^2K^2}
    =1-\left[(1+1/N)^{1+N}\right]^{-2M(1+N)K^2}.
\end{align}    
Note that the function $(1+x)^{1+1/x}$ is increasing for $x>0$. Hence, one has, for $x\in(0,1]$,
\beq\label{exineq}
(1+x)^{1+1/x}\le 2^2=4=e^{\kappa_*}\text{ with } \kappa_*=\ln 4.
\eeq 
Since $1/N=K^{3/2}<1$, we can apply \eqref{exineq} to estimate $(1+1/N)^{1+N}$ in \eqref{ebar} and obtain
    \beq\label{bem1}
    \bar \eta\le 1-e^{-2\kappa_* M(1+N)K^2}=1-e^{-2\kappa_* M(K^2+\sqrt K)}.
    \eeq
Combining estimate \eqref{bem1} with the formula of $\theta$ in \eqref{themu} gives
\beq \label{teKes}
\theta(K)=-\frac{\ln \bar \eta}{ \ln \lambda}
\ge -\frac{1}{\ln\lambda} \ln\left( 1-e^{-2\kappa_* M(K^2+\sqrt K)} \right).
\eeq 
Therefore, we obtain the limit  \eqref{thelim}.

We establish \eqref{thest} now.
Using the inequality $1-e^{-x}\le 2x$ for $x\ge 0$, we have
\beq\label{ebar2}
     1-e^{-2\kappa_* M(K^2+\sqrt K)}\le 4M\kappa_* (1+N)K^2=4\kappa_* M(K^2+K^{1/2})\le 8M\kappa_* K^{1/2}.
\eeq
Define 
\beq\label{Kodef}
K_0=(8M\kappa_*+1)^{-4}\in(0,1).
\eeq 
For $K\le K_0$, we have $K<1$ and $8M\kappa_*K^{1/2}\le K^{1/4}$, hence, together with \eqref{teKes} and \eqref{ebar2}, 
\beq\label{prethe}
\theta(K) \ge -\frac{1}{\ln\lambda} \ln (8M\kappa_*K^{1/2})
\ge -\frac{1}{\ln\lambda} \ln (K^{1/4}) =\frac{\ln K}{4\ln \delta_*}.
\eeq
This proves \eqref{thest}.
\end{proof}

\begin{remark}\label{samecyl}
(a) In Theorem \ref{STthm1}(i), when $p=0$, condition \eqref{dcond3} becomes
        \beq\label{scond}
{\rm diam}\left(\mathbb P_1 \left(\bar  Q_{[t-\delta_*,t]}\right)\right)\le  K,
\eeq 
and estimate \eqref{udecay2} reads as
\beq\label{scex}
    \max_{\overline{Q}_{t}} u^+
\le C\left(\max_{\overline{Q}_T}u^+\right) \exp(-\nu t) \text{ for all }t\ge T. 
\eeq
Compared with the the cylinder case, the condition \eqref{scond} is not as strict, but the decay in \eqref{scex} is just of the same exponential type, see e.g. \cite[Proposition 4.1]{HI3Rus}.

(b) Regarding the rate of decay in $t$, both  \eqref{beta-est} in   Theorem \ref{STthm0} and \eqref{udecay3} in Theorem \ref{STthm1}(ii) are of the same power-type.
However, the latter estimate  is stronger thanks to the limit \eqref{thelim}. 
This, of course, requires stricter conditions on the set $Q$ and is achieved by using a more technical tool -- the Growth Lemma.
\end{remark}

We now give examples for sets satisfying \eqref{dcond3} or \eqref{dcond4}.
In Examples \ref{eg1c}--\ref{eg1b} below, the number $T\ge 0$ can be taken sufficiently large if needed. 
All the sets $Q$ will satisfy Assumption \ref{GGamcond} thanks to Proposition \ref{gset}. Although not showed here, these sets  can certainly be made more complicated and verified by using Propositions \ref{gset1} and \ref{gset2} instead. They also serve as prototypes for later subsection \ref{HBsec} and Section \ref{inhomsec}.

\begin{example}\label{eg1c}
Given  numbers $p\in[0,1/2]$, $\kappa\ge 0$, let $X(t)$ be continuous in $[T,\infty)$ with 
$|X(t)|\le \kappa t^p$ for all $t\in[T,\infty)$.
    Let 
    $$Q=\{(x,t)\in\R^n\times(T,\infty):|x-X(t)|<\mathcal K t^p\}.$$
    Then $Q\subset \mathcal Q$ where 
        $$\mathcal Q=\{(x,t)\in\R^n\times(T,\infty):|x|<(\kappa+\mathcal K) t^p\}.$$
    Using \eqref{bigQ} and \eqref{dcmQ}, we have \eqref{dcond3} when $p\in[0,1/2)$ or \eqref{dcond4} when $p=1/2$ with $K=\kappa+\mathcal K$.
    Obviously,  $K$ is small whenever $\kappa$ and $\mathcal K$ are small.
\end{example}

\begin{example}\label{eg1}
 Consider the case $n=1$, $\delta_*=1$ and $p=0$ in \eqref{dcond3}. Let   
    \beqs 
    Q=\{(x,t)\in \R\times(T,\infty):|x-t|<1\}.
    \eeqs 
    Then 
${\rm diam}\left(\mathbb P_1 \left(\bar  Q_{[t-1,t]}\right)\right)
={\rm diam} ([t-2,t+1])=3.$
Hence, the condition \eqref{dcond3} is met, but the set $Q$ is not contained in a space-time cylinder. (However, this is a slanted cylinder which can be converted to a cylinder by a linear transformation, see e.g. \cite[Chap. 3, Sec. 2, Lemma 2.2]{LandisBook}.)

Another example, which is not  a slanted cylinder, is
  \beqs 
    Q=\{(x,t)\in \R\times(T,\infty):|x-t-\kappa(\ln t)^q|<1\},\  \kappa >0, \  q\in[0,1).
    \eeqs 
Observe, for each $\tau$, that
\beq\label{P1tau}
\mathbb P_1(\overline{Q}_\tau)=\left[\tau+\kappa(\ln \tau)^q-1,\tau+\kappa(\ln \tau)^q+1\right].
\eeq
For large $\tau$, the function $\tau+\kappa(\ln \tau)^q$ is increasing. Therefore, based on \eqref{P1tau}, the maximum and minimum of $\mathbb P_1 \left(\bar  Q_{[t-1,t]}\right)$ are
$t+\kappa(\ln t)^q+1$ and $(t-1)+\kappa(\ln (t-1))^q-1$, respectively.
As a consequence,
\begin{align*}
{\rm diam}\left(\mathbb P_1 \left(\bar  Q_{[t-1,t]}\right)\right)
&=[t+\kappa(\ln t)^q+1]-[t-1+\kappa(\ln (t-1))^q-1]\\
&=3+\kappa[(\ln t)^q-(\ln (t-1))^q]\le 3+\kappa q.
\end{align*}
(The last inequality is obtained by the Mean Value Theorem with $t$ large.) 
Therefore, \eqref{dcond3} is satisfied. 
\end{example}

\begin{example}\label{eg1b}
Consider the case $n=1$, $\delta_*=1$ and $p\in(0,1/2)$ in \eqref{dcond3}.
    Given numbers  $\kappa> 0$, $\mathcal K>0$, $m\in[0,1)$ and $q\ge 0$, let $X(t)=\kappa t^{m}(\ln t)^q$ and 
    \beqs
      Q=\{(x,t)\in\R\times(T,\infty):|x-X(t)|< \mathcal Kt^p\}.
    \eeqs  
Note that the case $m<p$ falls into Example \ref{eg1c}.    
   From now on, we consider  $m\ge p$. 
   Let $Z(t)=\mathbb P_1\left(\overline{Q}_{\left[t-t^{2p},t\right]}\right)$.
Observe, for each $\tau$, that
\beq\label{PQtau}
\mathbb P_1(\overline{Q}_\tau)=\left[X(\tau)-\mathcal K \tau^p,X(\tau)+\mathcal K\tau^p\right].
\eeq

Regarding the right end point of the interval in \eqref{PQtau}, the function $X(\tau)+\mathcal K\tau^p$ is increasing. Therefore, taking $\tau\in[t-t^{2p},t]$ in \eqref{PQtau}, it implies that the right end point of the set 
$Z(t)$  is $X(t)+\mathcal K t^p$.

Regarding the left end point of the interval in \eqref{PQtau}, let $g(\tau)= X(\tau)-\mathcal K\tau^p$ and we have the following cases.
\begin{itemize}
\item If $m>p$, the function $g(\tau)$ is increasing for $\tau$ large.
\item If $m=p$,  the function $g(\tau)$ is increasing for $\tau$ large when $q>0$, and is monotone when $q=0$.
\end{itemize}

When the function $g(\tau)$ is increasing, the left end point of $Z(t)$ is  $X(t-t^{2p})-\mathcal K(t-t^{2p})^p$.
When $g(\tau)$ is decreasing, the left end point of $Z(t)$ is  $X(t)-\mathcal Kt^p$.
Correspondingly, the diameter of $Z(t)$ is
\beqs
d(t)=[  X(t)+\mathcal Kt^p] -[X(t-t^{2p})-\mathcal K(t-t^{2p})^p]
=\mathcal Kt^p+\mathcal K(t-t^{2p})^p +  X(t)-X(t-t^{2p}),
\eeqs
or,
\beqs
d(t)=[ X(t)+\mathcal Kt^p]-[ X(t)-\mathcal Kt^p]
=2\mathcal Kt^p.
\eeqs
Using the Mean Value Theorem to estimate $X(t)-X(t-t^{2p})$, we have, in both scenarios of $g(\tau)$, 
\beq\label{degrate}
d(t)\le 2\mathcal Kt^p + \mathcal O(\kappa\cdot t^{m-1} (\ln t)^q \cdot t^{2p}).
\eeq

We choose $m\in[p,1-p)$ now. This yields $m-1+2p<p$, and \eqref{degrate} implies that \eqref{dcond3} is satisfied for large $T$.
If we further restrict $m\in(p,1-p)$, then $X(t)$  grows much faster than $t^p$.

When  $m=1/2$ and $\kappa=1$, if we use \eqref{bigQ} and \eqref{dcmQ} with 
$$\mathcal Q=\{(x,t)\in\R^n\times(T,\infty):|x|<t^{1/2}(\ln t)^q+\mathcal Kt^{p}\},$$
 then the diameter of $\bar {\mathcal Q}_t$ is $2(t^{1/2}(\ln t)^q+\mathcal Kt^{p})$ which does not satisfy \eqref{dcond3}.
\end{example}

The next result deals with the solutions instead of subsolutions of $L_0$.

\begin{theorem}\label{STthm1b}
Under Assumption \ref{secondA}, suppose $u\in C(\overline{Q})\cap C_{x,t}^{2,1}(\widetilde Q)$ satisfies $L_0 u= 0$ in $\widetilde Q$ and $u= 0$ on $\Gamma(Q)\setminus \overline{Q}_0$. 
\begin{enumerate}[label=\tnum]
    \item\label{st4a} If  $p\in[0,1/2)$ and  \eqref{dcond3} holds, then 
\beqs%\label{udecay2b}
\max_{\overline{Q}_t}|u|\le C\left(\max_{\overline{Q}_T}|u|\right) \exp(-\nu t^{1-2p}) \text{ for all }t\ge T,
\eeqs
where $C$ and $\nu$ are positive constants independent  of $u$.
    \item\label{st4b} If \eqref{dcond4} holds,
then
\beq\label{udecay3b}
\max_{\overline{Q}_t}|u|\le \left(\max_{\overline{Q}_T}|u|\right) (t/\mu)^{-\theta} \text{ for all }t\ge T,
\eeq
where $\mu>0$ and $\theta>0$ are the same as in Theorem \ref{STthm1}\ref{st4ii}.
\end{enumerate}
\end{theorem}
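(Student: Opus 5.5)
The plan is to apply Theorem \ref{STthm1} twice: once to $u$ and once to $-u$. By linearity of $L_0$, the hypothesis $L_0u=0$ in $\widetilde Q$ gives $L_0 u\le 0$ and $L_0(-u)\le 0$ in $\widetilde Q$. Likewise, $u=0$ on $\Gamma(Q)\setminus\overline{Q}_0$ gives $u\le 0$ and $-u\le 0$ there. Also $-u\in C(\overline{Q})\cap C_{x,t}^{2,1}(\widetilde Q)$. Hence both $u$ and $-u$ satisfy every hypothesis of Theorem \ref{STthm1}, under the same geometric condition \eqref{dcond3} (case \ref{st4a}) or \eqref{dcond4} (case \ref{st4b}).

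The constants must be the same for both applications. In the proof of Theorem \ref{STthm1}, the constants $C,\nu$ in \eqref{udecay2} and $\mu,\theta$ in \eqref{udecay3} are built only from $c_0$, $M_1$, $K$, $T$, $\delta_*$, $p$. These are the quantities entering $\bar\beta$, $\bar\eta$, $T_0$ and $\lambda$, and none of them depends on $u$. So applying Theorem \ref{STthm1} to $u$ and to $-u$ yields the same constants. Since $(-u)^+=u^-$, we obtain, for all $t\ge T$,
\[
\max_{\overline{Q}_t}u^+\le C\Big(\max_{\overline{Q}_T}u^+\Big)e^{-\nu t^{1-2p}},\qquad
\max_{\overline{Q}_t}u^-\le C\Big(\max_{\overline{Q}_T}u^-\Big)e^{-\nu t^{1-2p}}
\]
in case \ref{st4a}, and the analogous pair with the factor $(t/\mu)^{-\theta}$ in case \ref{st4b}.

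To finish, bound $\max_{\overline{Q}_T}u^\pm\le\max_{\overline{Q}_T}|u|$ using \eqref{ffrel}. Then combine the two estimates through \eqref{sup3}, exactly as in the proof of Theorem \ref{maxprin2}\ref{MP3} via \eqref{maxrel}: $\max_{\overline{Q}_t}|u|=\max\{\max_{\overline{Q}_t}u^+,\max_{\overline{Q}_t}u^-\}$. This gives both claimed inequalities, including \eqref{udecay3b} with the same $\mu,\theta$ as in Theorem \ref{STthm1}\ref{st4ii}. The only step needing care is the $u$-independence of the constants, which is immediate from their explicit formulas, so I expect no real obstacle.
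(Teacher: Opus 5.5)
Your proposal is correct and follows the paper's own argument: apply Theorem \ref{STthm1} to $u$ and to $-u$ to bound $u^+$ and $u^-$, using that the constants do not depend on $u$, then combine via \eqref{maxrel}. Your explicit check that the constants are built only from $c_0$, $M_1$, $K$, $T$, $\delta_*$, $p$ is exactly the detail the paper omits.
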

\begin{proof}
    We apply Theorem \ref{STthm1} to $u$ and $(-u)$ to have estimates for $u^+$ and $u^-$, and then use identity \eqref{maxrel}. We omit the details.
\end{proof}

Theorem \ref{STthm1} prompts a question that what estimate one can expect when the diameters grow slower than $t^{1/2}$ in \eqref{dcond4} but not as slow as \eqref{dcond3}. 
The following Theorem \ref{STthm2} and Corollary \ref{cor1} investigate this issue.

\begin{theorem}\label{STthm2}
Assume there are constants $\delta_*\in(0,1)$ and $T\ge e$ such that 
    \beq\label{dc8}
{\rm diam}\left(\mathbb P_1 \left(\overline{Q}_{[\delta_* t,t]}\right)\right)\le  \frac{t^{1/2}}{\varphi(t)},
\eeq 
for all $t\ge T$, where $\varphi(t)>0$ is increasing to infinity.

Under Assumption \ref{secondA}, there are  numbers $\mu_1=\mu_1(\delta_*)>0$, $\mu_0=\mu_0(\delta_*)\in(0,1)$ and $t^*>T$  such that the following statements hold true.
\begin{enumerate}[label=\tnum]
\item If $u\in C(\overline{Q})\cap C_{x,t}^{2,1}(\widetilde Q)$ satisfies $L_0 u\le 0$ in $\widetilde Q$ and $u\le 0$ on $\Gamma(Q)\setminus \overline{Q}_0$, then 
\beq\label{udec8}
\max_{\overline{Q}_t}u^+\le \left (\max_{\overline{Q}_{T}} u^+\right) t^{-\mu_1  \ln\left(\varphi(\mu_0\sqrt t)\right)} \text{ for all }t\ge t^*.
\eeq

\item If $u\in C(\overline{Q})\cap C_{x,t}^{2,1}(\widetilde Q)$ satisfies $L_0 u= 0$ in $\widetilde Q$ and $u=0$ on $\Gamma(Q)\setminus \overline{Q}_0$, then 
\beq\label{udec8b}
\max_{\overline{Q}_t}|u|\le \left (\max_{\overline{Q}_{T}} |u|\right)  t^{-\mu_1 \ln\left(\varphi(\mu_0\sqrt t)\right)} \text{ for all }t\ge t^*.
\eeq
\end{enumerate}
\end{theorem}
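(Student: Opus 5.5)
We will reuse the iteration scheme of subsection \ref{scheme} with the geometric time steps from the proof of Theorem \ref{STthm1}\ref{st4ii}. The difference is that the step-$k$ contraction factor $\eta_k$ will now improve with $k$, because the diameter bound $t^{1/2}/\varphi(t)$ lets us choose the ratio $r_k/d_k$ growing in $k$.

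\emph{Setup of the iteration.} Set $\lambda=1/\delta_*$, $T_0=\max\{T,\lambda/(\lambda-1)\}$ and $\tau_k=\lambda^k$, as in \eqref{Ttau2}. Then \eqref{tk3}, \eqref{tk5} and \eqref{tau3} hold verbatim. By \eqref{dc8}, $d_k\le T_k^{1/2}/\varphi(T_k)$. Choose $r_k=N_kd_k$ with a $k$-dependent $N_k\ge1$. Then $R_k=(1+N_k)d_k$ and, as in \eqref{prec2},
\beqs
\frac{R_k^2}{2\tau_k}\le \frac{(1+N_k)^2}{2\varphi(T_k)^2}\Big(T_0+\frac{\lambda}{\lambda-1}\Big).
\eeqs
Take $N_k=\varphi(T_k)^{1/2}$. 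For $k$ large this gives $R_k^2/(2\tau_k)\le C\varphi(T_k)^{-1}\to0$, so $\beta_k\le \bar\beta:=M_1/(2c_0)$ once $k$ is large. Also $\beta_k\le C'$ for all $k$. Consequently
\beqs
\eta_k\le 1-\Big(1+\varphi(T_k)^{-1/2}\Big)^{-2\bar\beta}\le 2\bar\beta\,\varphi(T_k)^{-1/2}.
\eeqs
For the finitely many small $k$ (before $\varphi(T_k)\ge1$, say) we simply use $\eta_k\le1$, i.e.\ Theorem \ref{maxcor}.

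\emph{Product estimate.} From \eqref{ugk0}, for $t\in(T_{k-1},T_k]$ we have $\max_{\overline Q_t}u^+\le \max_{\overline Q_{T_{k-1}}}u^+\le \prod_{j\le k-1}\eta_j\,\max_{\overline Q_T}u^+$. The key observation is that we only keep the factors with $j\ge k/2$ roughly, and bound the others by $1$. For those $j$ we have $T_j\ge \lambda^{j}\gtrsim \lambda^{k/2}\gtrsim \sqrt{T_k}$ by \eqref{tk5}. Hence, by monotonicity of $\varphi$, $\varphi(T_j)\ge\varphi(\mu_0\sqrt t)$ for a suitable $\mu_0(\delta_*)\in(0,1)$. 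Each such factor is then at most $2\bar\beta\varphi(\mu_0\sqrt t)^{-1/2}$. There are about $k/2\approx \ln t/(2\ln\lambda)$ of them. Therefore
\beqs
\prod\eta_j\le \exp\Big(-c\,\tfrac{\ln t}{\ln\lambda}\,\big[\tfrac12\ln\varphi(\mu_0\sqrt t)-\ln(2\bar\beta)\big]\Big).
\eeqs
For $t\ge t^*$ large, the bracket exceeds $\tfrac14\ln\varphi(\mu_0\sqrt t)$, which yields \eqref{udec8} with $\mu_1\sim 1/\ln\lambda$.

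\emph{Main obstacle.} The main obstacle is the bookkeeping that makes the constants depend only on $\delta_*$ as claimed. This covers two points: relating $k$, $T_k$ and $t$ via \eqref{tk5}, including the index shift $k-1$ and the off-by-constant in the counting of factors, and choosing $t^*$ so that the negative contribution from $\ln(2\bar\beta)$ is absorbed. If the threshold $t^*$ also depends on $c_0,M_1,T,\varphi$, that is acceptable since only $\mu_0,\mu_1$ are claimed to depend solely on $\delta_*$. Part (ii) then follows by applying (i) to $u$ and $-u$ and using \eqref{maxrel}, as in Theorem \ref{STthm1b}.
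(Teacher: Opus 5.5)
Your proposal is correct, and it reaches \eqref{udec8} by a genuinely different and somewhat leaner route than the paper.

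The paper keeps the ratio $r_k/d_k$ fixed. For each target time it restarts the scheme of Theorem \ref{STthm1}\ref{st4ii} at $T_0=\bar t$ with the frozen values $K=1/\varphi(\bar t)$ and $N=K^{-3/2}$, so that the quantitative bound \eqref{prethe}, $\theta(K)\ge \ln K/(4\ln\delta_*)$, can be reused. Because $T_0=\bar t$ is then large, the term $T_0/\lambda^k$ in \eqref{prec2} is no longer uniformly bounded. This forces the restriction $T_k\ge 2T_0$ of \eqref{T2T}, the choice of $k_0$ with $T_{k_0}\in[2\bar t,(2+\lambda)\bar t)$, iterating only over the window from $k_0$ to $2k_0$, and finally the substitution $\bar t=\mu_0\sqrt t$ via \eqref{mu0}.

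You instead run a single iteration from a fixed $T_0$, so $T_k/\lambda^k$ stays bounded, and let $N_k=\varphi(T_k)^{1/2}$ grow. Then $R_k^2/(2\tau_k)\to 0$, $\beta_k$ settles at $M_1/(2c_0)$, and Bernoulli's inequality $1-(1+x)^{-a}\le ax$ gives $\eta_k\le (M_1/c_0)\,\varphi(T_k)^{-1/2}$ directly. You never need the device $(1+x)^{1+1/x}\le 4$ or the threshold $K_0$.

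Discarding the factors with $j<k/2$ plays exactly the role of the paper's window $[k_0,2k_0]$. In both proofs only the roughly $\ln t/(2\ln\lambda)$ geometric steps beyond a fixed multiple of $\sqrt t$ are used, each contracting by a power of $\varphi(\mu_0\sqrt t)^{-1}$. That is why both give $\mu_1$ of the form $c/\ln\lambda$ (the paper takes $1/(16\ln\lambda)$; your slack choices give the same order). Both also give a $\mu_0$ depending only on $\delta_*$; yours can be taken as $\sqrt{(\lambda-1)/(2\lambda)}$, using $T_j\ge\lambda^j$ and $T_k\le 2\lambda^{k+1}/(\lambda-1)$ for large $k$.

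The paper's version presents Theorem \ref{STthm2} as a direct corollary of the power-law estimate with $K\to 0$. The price is a bound on $\bar\beta$ that grows like $\varphi(\bar t)$, which must be offset by the faster growth $N=\varphi(\bar t)^{3/2}$. Yours keeps $\beta_k$ bounded, uses one iteration for all $t$, and makes transparent that the mechanism is simply contraction factors $\eta_k\to 0$.
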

\begin{proof}Same as in the proof of Theorem \ref{STthm1}\ref{st4ii}, denote 
$\lambda=1/\delta_*>1$ and $\kappa_*=\ln 4$.
Set 
\beq\label{barMK}
\bar M=\frac{M_1+\lambda/(\lambda-1)}{2c_0}  \text{ and }
\bar K_0=(8\bar M\kappa_*+1)^{-4}\in(0,1).
\eeq
Let 
$\bar K(t)=1/\varphi(t)$.
Then $\bar K(t)$ is positive, decreasing, for $t\ge T$, and goes to zero as $t\to \infty$.
Let $\bar T>\max\{\lambda,\lambda/(\lambda-1)\}$ be sufficiently large such that
\beq\label{barKK}
\bar K(t)\le \bar K_0\text{ for all }t\ge \bar T.
\eeq 

Let $\bar t$ be any number in $[\bar T,\infty)$.
Note that $\bar T>\lambda/(\lambda-1)>1$. 
For all $t\ge \bar t$, we have, by \eqref{dc8} that
   \beqs%\label{dc1}
{\rm diam}\left(\mathbb P_1 \left(\overline{Q}_{[\delta_* t,t]}\right)\right)
\le \bar K(t) t^{1/2}
\le \bar K(\bar t) t^{1/2}.
\eeqs 
Mimicking \eqref{Ttau2} and \eqref{Tstar}, let 
\beqs
\bar T_0=\max \left\{\bar t,\frac{\lambda}{\lambda-1}\right\}=\bar t \text{ and } 
\bar T_*=2\max\left\{\bar t,\frac{\lambda}{\lambda-1}\right\}
=2\bar t.
 \eeqs
We follow the proof of Theorem \ref{STthm1}(ii) using 
\beqs% \label{newpars}
T_0=\bar T_0=\bar t,\ 
K=\bar K(\bar t), \ 
T_*=\bar T_* \text{ and, again, } N=K^{-3/2}.
\eeqs 
Instead of \eqref{c2e}, we return to \eqref{prec2} and utilize the lower bound of $\lambda^k$ in \eqref{tk5}. It results in 
    \beqs
    \frac{R_k^2}{2\tau_k}
 \le \frac12(1+N)^2K^2\left (\frac{\lambda T_0}{(T_k-T_0)(\lambda-1)}+\frac{\lambda}{\lambda-1}\right) .    
 \eeqs
 With this and \eqref{NK1}, we re-estimate $\beta_k$ by
\beq\label{newbk}
\beta_k= \frac{1}{2c_0}\max\left\{M_1,\frac{R_k^2}{2\tau_k}\right\}\le  \frac{1}{2c_0}(1+N)^2 K^2 \left[M_1+\frac{\lambda}{2(\lambda-1)}\left (\frac{T_0}{T_k-T_0}+1\right)\right].
\eeq

 Referring to $T_k$ in \eqref{tk3}, we momentarily assume 
\beq\label{T2T}
T_k\ge 2T_0=2\bar T_0=2\bar t,
\eeq
Then $T_0/(T_k-T_0)\le 1$, and, hence, we have from \eqref{newbk} that
\beqs
\beta_k\le \frac{1}{2c_0}(1+N)^2 K^2 \left[M_1+\frac{\lambda}{\lambda-1}\right]
= \bar M(1+N)^2 K^2.
\eeqs
Therefore, under the assumption \eqref{T2T}, we can replace \eqref{the2} with 
\beq\label{been}
\beta_k\le \bar \beta \text{ and }\eta_k\le \bar \eta,
\eeq
where the new numbers $\bar \beta$ and $\bar\eta$ satisfy inequalities \eqref{be2} and \eqref{ebar}, with $M$ being replaced with $\bar M$ defined in \eqref{barMK}.

By replacing $K_0$ in \eqref{Kodef} with $\bar K_0$ defined in \eqref{barMK}, and observing that, thanks to \eqref{barKK},  $K=\bar K(\bar t)\le \bar K_0$, we have that the number $\nu=\theta$ in \eqref{themu}, now denoted by $\theta(K)$, still satisfies the last inequality in \eqref{prethe}, i.e.,  
\beq\label{theb}
\nu=\theta(K)\ge -\frac 1{4\ln\lambda}\ln \bar K(\bar t)
=\frac1{4\ln\lambda}\ln \varphi(\bar t).
\eeq

Suppose $k$ and $k_0$ are two integers satisfying $k>k_0\ge 1$ and $T_{k_0}\ge 2\bar t$.
For any integer $j\in[k_0,k]$, we have $T_j\ge T_{k_0}\ge 2\bar t$.
Then, thanks to \eqref{T2T} and \eqref{been}, we obtain $\eta_j\le \bar \eta$ 
Therefore,  same as the inequality \eqref{exkdec7} but with the use of \eqref{ugk0} instead of \eqref{ugrowk} in deriving it, we have 
\beq\label{uQk}
    \max_{\overline{Q}_{T_k}} u^+\le \left (\max_{\overline{Q}_{T_{k_0}}} u^+\right)  \lambda^{-\nu (k-k_0)}.
\eeq
In particular, taking $k=2k_0$ in \eqref{uQk} gives
\beq\label{uQk2}
    \max_{\overline{Q}_{T_{2k_0}}} u^+\le \left (\max_{\overline{Q}_{T_{k_0}}} u^+\right)  \lambda^{-\nu k_0}.
\eeq

 Let $k_0\ge 1$ be an integer satisfying
 \beq\label{k0}
 2\bar t \le T_{k_0}=\bar t+\frac{\lambda(\lambda^{k_0}-1)}{\lambda-1}< (2+\lambda)\bar t.
 \eeq
Observe that \eqref{k0}  is equivalent to 
 \beq\label{tbar0}
 \frac{\lambda-1}{\lambda}\bar t +1\le \lambda^{k_0}<  \frac{\lambda-1}{\lambda}(1+\lambda)\bar t+1,
 \eeq
which, in turn, is equivalent to
  \beq\label{bart}
\log_\lambda \left[ \frac{\lambda-1}{\lambda}\bar t +1\right]\le k_0< \log_\lambda \left[ \frac{\lambda-1}{\lambda}(1+\lambda)\bar t+1\right].
 \eeq
To guarantee \eqref{bart}, we need 
\beq \label{tb1}
\log_\lambda \left[ \frac{\lambda-1}{\lambda}\bar t +1\right]\ge 1
\eeq 
and 
\beq\label{tb2}
\log_\lambda\left[ \frac{\lambda-1}{\lambda}(1+\lambda)\bar t+1\right]-\log_\lambda\left[ \frac{\lambda-1}{\lambda}\bar t +1\right]\ge 1.
\eeq 
Both conditions \eqref{tb1} and \eqref{tb2} are equivalent to $\bar t\ge \lambda$ which is satisfied thanks to the fact $\bar t\ge \bar T>\lambda$. Therefore, such an integer $k_0$ satisfying \eqref{k0} exists.

By \eqref{uQk2}, the estimate of $\nu$ in \eqref{theb}, and then the first inequality in \eqref{tbar0} as well as Theorem \ref{maxcor}, we have 
\beq\label{uQp}
    \max_{\overline{Q}_{T_{2k_0}}} u^+\le \left (\max_{\overline{Q}_{T_{k_0}}} u^+\right)  \left(\lambda^{k_0}\right)^{-\frac1{4\ln\lambda} \ln(\varphi(\bar t))}
    \le \left (\max_{\overline{Q}_T} u^+\right)  \left(\frac{\lambda-1}{\lambda}\bar t\right)^{-\frac1{4\ln\lambda} \ln(\varphi(\bar t))}.
\eeq
Using the second inequality in  \eqref{tbar0} and also the fact $\bar t> 1$, we estimate, see \eqref{tk3} with $k:=2k_0$,  
\begin{align*}
T_{2k_0}&=\bar t+\frac{\lambda(\lambda^{2k_0}-1)}{\lambda-1}
\le \bar t +\frac{\lambda(\frac{\lambda^2-1}{\lambda}\bar t+1)^2}{\lambda-1}\\
&\le \bar t +\frac{\lambda}{\lambda-1}(\lambda\bar t+1)^2
\le \bar t +\frac{\lambda}{\lambda-1}[(\lambda +1)\bar t]^2.
\end{align*}
This yields
\beq\label{mu0}
T_{2k_0}\le \left[1+\frac{\lambda(\lambda +1)^2}{\lambda-1}\right]\bar t^2
=\frac{\bar t^2}{\mu_0^2} ,
\text{ where }
\mu_0=\left[1+\frac{\lambda(\lambda +1)^2}{\lambda-1}\right]^{-1/2}.
\eeq
Clearly, $\mu_0$ in \eqref{mu0} depends only $\delta_*$ only and belongs to the interval $(0,1)$.
By  Theorem \ref{maxcor}, \eqref{mu0} and \eqref{uQp}, we have
\beq\label{uQmu}
    \max_{\overline{Q}_{ \bar t^2/\mu_0^2}} u^+\le    \max_{\overline{Q}_{T_{2k_0}}} u^+\le \left (\max_{\overline{Q}_{T}} u^+\right)  \left(\frac{\lambda-1}{\lambda}\bar t\right)^{-\frac1{4\ln\lambda} \ln(\varphi(\bar t))}.
\eeq

For any sufficiently large $t$ with $\mu_0\sqrt{t}\ge \bar T$ and 
$(\lambda-1)\mu_0\sqrt t/\lambda\ge t^{1/4}$,  taking $\bar t =\mu_0\sqrt{t}$ in \eqref{uQmu} yields
\begin{align*}
    \max_{\overline{Q}_{t }} u^+ 
    &\le \left (\max_{\overline{Q}_{T}} u^+\right)  \left(\frac{(\lambda-1)\mu_0\sqrt t}{\lambda}\right)^{-\frac1{4\ln\lambda} \ln\left(\varphi(\mu_0\sqrt t)\right)}\\
&\le \left (\max_{\overline{Q}_{T}} u^+\right)  \left(t^{1/4}\right)^{-\frac1{4\ln\lambda} \ln\left(\varphi(\mu_0\sqrt t)\right)},
\end{align*}
which implies  \eqref{udec8} with $\mu_1=1/(16\ln \lambda)$.

(ii) Apply part (i) to $u$ and $-u$, and then use \eqref{maxrel}. 
\end{proof}

\begin{example}\label{egln}
    Assume there are numbers $\delta_*\in(0,1)$ and $z>0$ such that 
\beqs%\label{dcond8}
{\rm diam}\left(\mathbb P_1 \left(\overline{Q}_{[\delta_* t,t]}\right)\right)=\mathcal O\left(\frac{ t^{1/2}}{(\ln t)^z}\right)\text{ as }t\to\infty.
\eeqs 
Then the estimate \eqref{udec8}, resp., \eqref{udec8b}, can be replaced with 
\begin{align}\label{ud}
\max_{\overline{Q}_t}u^+&=\mathcal O(t^{-\frac{z\mu_1}{2}\ln\ln t})  \text{ as } t\to\infty,\\
\label{udb}
\text{resp., }
\max_{\overline{Q}_t}|u|&=\mathcal O(t^{-\frac{z\mu_1}{2}\ln\ln t})  \text{ as } t\to\infty .
\end{align}
Obviously, the explicit decaying rate in \eqref{ud} is faster than any rate in \eqref{udecay3}, but slower than any rate in \eqref{udecay2}.

To prove \eqref{ud} and \eqref{udb}, we apply Theorem \ref{STthm2} and take
$\varphi(t)=(\ln t)^z/K$
for some constant $K>0$ and all sufficiently large $t$.
Then, for sufficiently large $t$,
\beq\label{phie}
\ln\left(\varphi(\mu_0\sqrt t)\right)=z\ln\ln (\mu_0\sqrt t)-\ln K=z\ln \left(\frac12\ln t+\ln\mu_0\right)-\ln K
\ge \frac z 2 \ln\ln t.
\eeq
With estimate \eqref{phie}, the statements \eqref{ud} and \eqref{udb} follow from \eqref{udec8} and \eqref{udec8b}, respectively. 
\end{example}

\begin{corollary}\label{cor1}
Assume there exists $\delta_*\in(0,1)$ such that
  \beq\label{dcondo}
{\rm diam}\left(\mathbb P_1 \left(\overline Q_{[\delta_* t,t]}\right)\right)=o(t^{1/2}) \text{ as }t\to\infty.
 \eeq 
Under Assumption \ref{secondA}, one has the following.

\begin{enumerate}[label=\tnum]
    \item If $u\in C(\overline{Q})\cap C_{x,t}^{2,1}(\widetilde Q)$ satisfies $L_0 u\le 0$ in $\widetilde Q$ and $u\le 0$ on $\Gamma(Q)\setminus \bar Q_0$,  
then
\beq\label{mxo1}
\max_{\bar Q_t}u^+ =o(t^{-\theta}) \text{ for all }\theta>0.
\eeq

    \item If $u\in C(\overline{Q})\cap C_{x,t}^{2,1}(\widetilde Q)$ satisfies $L_0 u=0$ in $\widetilde Q$ and $u=0$ on $\Gamma(Q)\setminus \bar Q_0$,  
then
\beq\label{mxo2}
\max_{\bar Q_t}|u| =o(t^{-\theta}) \text{ for all }\theta>0.
\eeq
\end{enumerate}
\end{corollary}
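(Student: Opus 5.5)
We will derive Corollary \ref{cor1} from Theorem \ref{STthm1}\ref{st4ii}, exploiting the freedom in its constant $K$ together with the limit \eqref{thelim}. An alternative would be Theorem \ref{STthm2} with a suitably slowly growing $\varphi$, but the direct route seems cleaner.

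First, fix $\theta>0$. We must show $t^{\theta}\max_{\overline Q_t}u^+\to0$. Set $\theta'=\theta+1$. Using \eqref{thelim}, or more concretely \eqref{thest}, choose $K\in(0,K_0]$ with $\theta(K)\ge\theta'$. One subtlety is that $K_0$ depends on $c_0,M_1,\delta_*$ and on $T$, through $M$ in \eqref{be2} and $T_0=\max\{T,\lambda/(\lambda-1)\}$, while $T$ itself will be chosen according to $K$. To avoid circularity, we will not use the statement of Theorem \ref{STthm1}\ref{st4ii} as a black box. Instead we rerun its proof from a starting time $T$, keeping track of how $\bar\beta$ depends on $T$.

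The key observation is that the dependence on $T_0$ enters only through $c_2$ in \eqref{c2e}. This quantity can be handled as in the proof of Theorem \ref{STthm2}: use the lower bound $\lambda^{k+1}\ge (T_k-T_0)(\lambda-1)$ from \eqref{tk5}. Once $T_k\ge 2T_0$, this gives $R_k^2/(2\tau_k)\le (1+N)^2K^2\lambda/(\lambda-1)$, which is independent of $T_0$. Thus we will work with $\bar M$ and $\bar K_0$ from \eqref{barMK}, which depend only on $c_0,M_1,\delta_*$. We then pick $K\le\bar K_0$ with $\frac{\ln K}{4\ln\delta_*}\ge\theta'$, which is possible since $\ln K/\ln\delta_*\to\infty$ as $K\to0$.

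Next, by \eqref{dcondo} there is $\bar t\ge\max\{T,\lambda,\lambda/(\lambda-1)\}$ such that the diameter in \eqref{dcondo} is at most $K t^{1/2}$ for all $t\ge\bar t$. Running the iteration with $T_0=\bar t$, $N=K^{-3/2}$, $\tau_k=\lambda^k$, let $k_0$ be the first index with $T_{k_0}\ge 2\bar t$. For $k>k_0$ we then get, exactly as in \eqref{uQk},
\[
\max_{\overline Q_{T_k}}u^+\le \Bigl(\max_{\overline Q_{T_{k_0}}}u^+\Bigr)\lambda^{-\nu(k-k_0)},\qquad \nu\ge\theta'.
\]
Now interpolate between the times $T_k$ via Theorem \ref{maxcor}. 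Since $\lambda^{k}\sim T_k(\lambda-1)/\lambda$, this yields $\max_{\overline Q_t}u^+\le C(\bar t,\lambda)\bigl(\max_{\overline Q_{\bar t}}u^+\bigr)t^{-\theta'}$ for large $t$. Here the finite factor from $k_0$ and $\bar t$ is harmless because $\bar t$ is fixed once $\theta$ is fixed. Consequently $t^\theta\max_{\overline Q_t}u^+\le C' t^{-1}\to0$, which proves \eqref{mxo1}.

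The main obstacle is the circular dependence just described: the constant $K_0$ in Theorem \ref{STthm1} depends on the starting time, while the starting time depends on $K$. The step needing care is therefore verifying that the bounds on $\beta_k$ and $\eta_k$ hold uniformly in $T_0$ for $k$ beyond $k_0$, using \eqref{tk5} as done in \eqref{newbk}--\eqref{been}. Part (ii) then follows by applying part (i) to $u$ and $-u$ and combining via \eqref{maxrel}.
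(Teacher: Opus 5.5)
Your proof is correct, but it takes a different route from the paper's.

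\textbf{The paper's route.} It does not rerun any iteration. It defines $\varphi(t)=\inf_{\tau\ge t}\tau^{1/2}/\bigl({\rm diam}(\mathbb P_1(\overline Q_{[\delta_*\tau,\tau]}))+1\bigr)$. This function is increasing, tends to infinity by \eqref{dcondo}, and satisfies \eqref{dc8}. The paper then invokes Theorem \ref{STthm2} as a black box. Since $\ln\varphi(\mu_0\sqrt t)\to\infty$, the rate $t^{-\mu_1\ln\varphi(\mu_0\sqrt t)}$ from \eqref{udec8} beats every power of $t$.

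\textbf{Your route.} You fix $\theta$ first. You then choose $K\le\bar K_0$ with $\ln K/(4\ln\delta_*)\ge\theta+1$, and take $\bar t$ from \eqref{dcondo} for that $K$. Finally you iterate from the fixed start $T_0=\bar t$, using the device $T_k\ge 2T_0$ from \eqref{newbk}--\eqref{been} so that $\bar\beta$ no longer depends on $T_0$. This is the same mechanism that drives the proof of Theorem \ref{STthm2}. You also correctly diagnose why Theorem \ref{STthm1}\ref{st4ii} cannot be used as a black box: $K_0$ and the bound on $\theta(K)$ degrade as the starting time grows, while the starting time must grow as $K\to0$.

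\textbf{Comparison.} Because you keep $\bar t$ fixed rather than setting $\bar t=\mu_0\sqrt t$, you avoid the window \eqref{k0}, the bound \eqref{mu0} on $T_{2k_0}$, and the construction of a monotone $\varphi$. This makes your argument more self-contained for the qualitative claim, though it repeats work already done in Theorem \ref{STthm2}. The paper's reduction is two lines. Through Theorem \ref{STthm2} it also yields an explicit super-polynomial decay rate, whereas you get $\mathcal O(t^{-\theta'})$ for each fixed $\theta'$ with a $\theta'$-dependent constant.
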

\begin{proof}
For sufficiently large $t$, define
\beq\label{phinf}
\varphi(t)=\inf_{\tau\ge t} \frac{\tau^{1/2}}{{\rm diam}\left(\mathbb P_1 \left(\overline Q_{[\delta_* \tau,\tau]}\right)\right)+1}.
\eeq
Clearly, $\varphi(t)$ is increasing.
Thanks to \eqref{dcondo},
\beqs 
\lim_{\tau\to\infty}\frac{\tau^{1/2}}{{\rm diam}\left(\mathbb P_1 \left(\overline Q_{[\delta_* \tau,\tau]}\right)\right)+1}=\infty,
\eeqs 
thus, $\varphi(t)\to\infty$ as $t\to\infty$. By \eqref{phinf}, 
\beqs
\varphi(t)\le \frac{t^{1/2}}{{\rm diam}\left(\mathbb P_1 \left(\overline Q_{[\delta_* t,t]}\right)\right)+1},
\eeqs
which implies \eqref{dc8}. 
Then we can apply Theorem \ref{STthm2}.
Since $\ln(\varphi(\mu_0\sqrt t))\to\infty$ as $t\to\infty$, we obtain \eqref{mxo1}, resp. \eqref{mxo2} from \eqref{udecay3}, resp. \eqref{udecay3b}.
\end{proof}

\subsection{Bounded drifts}\label{HBsec}
We now turn to the case of general bounded drifts.

\begin{assumption}\label{condB}
Assume there is a constant $M_2\ge 0$ so that
\beq\label{BM2}
 |b(x,t)|\le M_2 \text{ for all }(x,t)\in \widetilde Q.
\eeq
\end{assumption}

We obtain estimates for different types of space-time sets.

\begin{theorem}\label{STthm3}
Under Assumptions \ref{secondA} and \ref{condB}, consider the following cases. 
\begin{itemize}
    \item Case 1. There are $\delta_*>0$, $T\ge \delta_*$ and $K>0$ such that
    \beq\label{dcond5}
{\rm diam}\left(\mathbb P_1 \left(\bar  Q_{[t-\delta_*,t]}\right)\right)\le  K \text{ for all }t\ge T.
\eeq

\item Case 2. There are $\delta_*>0$, $T\ge\max\{\delta_*, e\}$ and 
\beq\label{Kc2} 
K\in\left(0,K_*\right), \text{ where }
K_*=\frac1{1+\delta_*}\sqrt{\frac{2c_0\delta_*}{\ln(1+1/\delta_*)}},
\eeq  
such that 
 \beq\label{dcond60}
{\rm diam}\left(\mathbb P_1 \left(\bar  Q_{[t-\delta_*,t]}\right)\right)\le  K(\ln t)^{1/2} \text{ for all }t\ge T.
\eeq

\item Case 3. There are $\delta_*>0$, $z\in(0,1)$, $T\ge \max\{\delta_*^\frac1{1-z},e\}$ and  
\beq\label{Kc3}
  K\in\left(0,(1-z)K_*\right),\text{ where }K_*=\frac{c_0}{2(\ln 4) (M_1+M_2+1)},
\eeq
 such that 
    \beq\label{dcond6}
{\rm diam}\left(\mathbb P_1 \left(\overline{Q}_{\left[t-\delta_* t^z,t\right]}\right)\right)\le  K\ln t
\text{ for all }t\ge T.
\eeq
\end{itemize}
Define
\beq \label{theta}
\theta=\begin{cases}
   1 & \text{in Case 1},\\ 
1-K^2/K_*^2 & \text{in Case 2},\\
1-z-K/K_*&\text{in Case 3}. 
\end{cases}
\eeq 
Then there exist numbers $C>0$ and $\nu>0$ such that   one has the following.
\begin{enumerate}[label=\tnum]
    \item  If $u\in C(\overline{Q})\cap C_{x,t}^{2,1}(\widetilde Q)$ satisfies  $Lu\le 0$ in $\widetilde Q$ and $u\le 0$ on $\Gamma(Q)\setminus \overline{Q}_0$, then 
\beq\label{udecay0}
\max_{\overline{Q}_t}u^+\le C \left(\max_{\overline{Q}_T}u^+\right) e^{-\nu t^\theta} \text{ for all } t\ge T.
\eeq

\item  If $u\in C(\overline{Q})\cap C_{x,t}^{2,1}(\widetilde Q)$ satisfies  $Lu=0$ in $\widetilde Q$ and $u= 0$ on $\Gamma(Q)\setminus \overline{Q}_0$, then 
\beq\label{udecay1}
\max_{\overline{Q}_t}|u|\le C \left(\max_{\overline{Q}_T}|u|\right) e^{-\nu t^\theta} \text{ for all }t\ge T.
\eeq
\end{enumerate}
\end{theorem}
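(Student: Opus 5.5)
The plan is to run the iteration scheme of subsection \ref{scheme} with $m_k=M_2$ in \eqref{TABk}, so that \eqref{sTek0} reads
\[
\beta_k=\frac1{2c_0}\max\Big\{M_1+M_2R_k,\frac{R_k^2}{2\tau_k}\Big\},\qquad \eta_k=1-\Big(\frac{N}{1+N}\Big)^{2\beta_k}\quad\text{with } r_k=Nd_k .
\]
In each case I will choose $\tau_k$ so that $[T_{k-1},T_k]$ lies inside the window of the diameter hypothesis, which bounds $d_k$. I will then bound $\eta_k\le 1-cT_k^{-\alpha}$ (or $\eta_k\le\bar\eta<1$), iterate via \eqref{ugk0}/\eqref{ugrowk}, and use $\prod_j(1-x_j)\le \exp(-\sum_j x_j)$. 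To pass from the discrete times $T_k$ to arbitrary $t\ge T$ I will use Theorem \ref{maxcor}, exactly as at the end of the proof of Theorem \ref{STthm1}\ref{st4i}. Part (ii) then follows by applying (i) to $\pm u$ and using \eqref{maxrel}.

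\emph{Case 1.} Take $T_0=T$, $\tau_k=\delta_*$ and $N=1$. Then $[T_{k-1},T_k]=[T_k-\delta_*,T_k]$, so \eqref{dcond5} gives $d_k\le K$ and $R_k\le 2K$. Hence $\beta_k\le\bar\beta=\frac1{2c_0}\max\{M_1+2M_2K,K^2/\delta_*\}$ and $\eta_k\le\bar\eta=1-4^{-\bar\beta}<1$. This yields $\max_{\overline Q_{T_k}}u^+\le \bar\eta^{k}\max_{\overline Q_T}u^+$. Since $k\approx (t-T)/\delta_*$, we obtain \eqref{udecay0} with $\theta=1$.

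\emph{Case 2.} Again take $\tau_k=\delta_*$, so $T_k=T+k\delta_*$ and $d_k\le K(\ln T_k)^{1/2}$, but now choose $N=\delta_*$, so that $R_k=(1+\delta_*)d_k$. For $k$ large the quadratic term dominates, because $R_k^2/(2\delta_*)\sim\ln T_k$ while $M_1+M_2R_k\sim(\ln T_k)^{1/2}$. Finitely many early steps are absorbed into $C$ using $\eta_k<1$ and Theorem \ref{maxcor}. For the remaining steps
\[
2\beta_k\le\frac{(1+\delta_*)^2K^2\ln T_k}{2c_0\delta_*},\qquad
\Big(\frac{\delta_*}{1+\delta_*}\Big)^{2\beta_k}\ge T_k^{-\alpha},\quad \alpha=\frac{(1+\delta_*)^2K^2\ln(1+1/\delta_*)}{2c_0\delta_*}=\frac{K^2}{K_*^2}<1,
\]
by \eqref{Kc2}. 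Then $\prod_{j\le k}\eta_j\le \exp\big(-c\sum_j (T+j\delta_*)^{-\alpha}\big)\le C\exp(-\nu k^{1-\alpha})$, which gives $\theta=1-K^2/K_*^2$.

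\emph{Case 3.} Mimic the proof of Theorem \ref{STthm1}\ref{st4i}. Set $\sigma=z/(1-z)$ and $\tau_k=\delta_0k^\sigma$, with $\delta_0$ chosen as in \eqref{sigdef}, \eqref{d0def} with $2p$ replaced by $z$. This gives $\tau_k\le \delta_*T_k^z$, so \eqref{dcond6} yields $d_k\le K\ln T_k$, and also $T_k\asymp k^{\sigma+1}=k^{1/(1-z)}$. Take $N=1$. Since $R_k^2/(2\tau_k)\lesssim (\ln k)^2/k^\sigma\to0$, the drift term dominates. Using $\ln T_k\ge1$ (as $T\ge e$), I bound $2\beta_k\le (M_1+2M_2K\ln T_k+1)/c_0\le \frac{(M_1+M_2+1)\,2K}{c_0}\ln T_k+C'$ for large $k$; a small $K$ and absorbing constants are allowed here. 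Consequently $(1/2)^{2\beta_k}\ge cT_k^{-\alpha}$ with $\alpha\le K/K_*$ by \eqref{Kc3}, and $\sum_{j\le k}T_j^{-\alpha}\gtrsim k^{1-\alpha/(1-z)}\asymp T_k^{\,1-z-\alpha}$. This gives $\theta=1-z-K/K_*>0$.

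The main obstacle is the constant bookkeeping in Cases 2 and 3. One must show that the subdominant term in $\beta_k$ does not spoil the exponent $\alpha$: in Case 2 the term $M_1+M_2R_k$ only costs a multiplicative constant, and in Case 3 one must check that the factor $\ln 4$ in $K_*$ really covers $2\ln 2\cdot(M_2K)/c_0$ together with the $M_1$ contribution. The other delicate point is converting the sum over $k$ into the power $t^\theta$, for which I use bounds of the type \eqref{Tkk0} relating $k$ and $T_k$.
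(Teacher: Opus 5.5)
Your proposal is correct. Case 1, the passage from the times $T_k$ to arbitrary $t$ via Theorem \ref{maxcor}, and part (ii) match the paper. The real difference is how you choose the auxiliary radius $r_k$ in Cases 2 and 3.

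\emph{The paper's route.} It uses adaptive radii: $r_k=\tau_kK^2\ln T_k/d_k$ in Case 2 and $r_k=\tau_kK\ln T_k/d_k$ in Case 3. These bound $R_k$ from below (with $T_0$ enlarged as in \eqref{T0}, resp.\ for $k\ge k_0$). As a result, $\beta_k$ equals the diffusion term $R_k^2/(4c_0\tau_k)$ in Case 2, and is bounded by $M=2(M_1+M_2+1)$ times it in Case 3. The exponent then comes from $(1+d_k/r_k)^{1+r_k/d_k}\le e^{\kappa_*}$ together with $d_kR_k/\tau_k=O(\ln T_k)$.

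\emph{Your route.} You keep the ratio $r_k=Nd_k$ fixed and bound $\beta_k$ from above using only the upper bound on $d_k$.
In Case 2, with $N=\delta_*$, this yields exactly the same exponent $K^2/K_*^2$ with less machinery.
In Case 3, you observe that $R_k^2/(2\tau_k)\to 0$, because $\tau_k\sim k^\sigma$ while $d_k=O(\ln k)$. So the drift term governs $\beta_k$, and with $N=1$ you get $\eta_k\le 1-cT_k^{-\alpha}$ with $\alpha=\tfrac12 K/K_*$. Without inflating $2M_2$ to $2(M_1+M_2+1)$, you even get $\alpha=\ln 4\,M_2K/c_0$. Your decay exponent $1-z-\alpha$ is therefore strictly larger than $\theta$, and \eqref{udecay0} follows since $t\ge T>1$.

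\emph{What each buys.} Your argument is simpler, and it shows that in Case 3 the smallness condition \eqref{Kc3} is stronger than needed: only $\ln 4\,M_2K/c_0<1-z$ is used. The paper's adaptive choice gives closed-form product bounds such as \eqref{lneta2} and \eqref{lneta4}, with the specific $\nu$ of \eqref{nudef2} and \eqref{nudef3}. Theorem \ref{NHthm} reuses these directly.

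Small fixes:
\begin{itemize}
\item In Case 1, $R_k^2/(2\tau_k)\le (2K)^2/(2\delta_*)=2K^2/\delta_*$, not $K^2/\delta_*$. This is harmless.
\item In Case 2, \eqref{dcond60} only bounds $d_k$ from above, so $R_k$ itself need not grow. State the domination of the quadratic term for $\bar R_k=(1+\delta_*)K(\ln T_k)^{1/2}$ instead, using that both entries of the max defining $\beta_k$ increase with $R_k$.
\item In Case 3, $\alpha\le K/K_*$ comes from the definition of $K_*$ itself. What \eqref{Kc3} supplies is $\alpha(\sigma+1)<1$, which you need so that $\sum_j T_j^{-\alpha}$ grows like $T_k^{1-z-\alpha}$.
\end{itemize}
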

\begin{proof}
Note that the number $\theta$ defined by \eqref{theta} belongs to the interval $(0,1)$ in Cases 2 and 3.

\medskip\noindent
Part (i). We prove the estimate \eqref{udecay0} for each case.

\medskip\noindent
\textbf{Case 1.}
Let $T_0=T$ and $\delta_0=\delta_*$. Let $\tau_k=\delta_0$ for all $k\ge 1$ and define $T_k$ as in \eqref{Tksum},
 $d_k$ as in  \eqref{Vdk},  $r_k$  as in \eqref{rRk} with $N=1$, $R_k$ as in \eqref{rRk0}, and $\beta_k$, $\eta_k$ as in \eqref{sTek0}. 
In particular, $r_k=d_k$ and $R_k=2d_k$.

 Thanks to Assumption \ref{condB} with the value $M_2$ in \eqref{BM2},  we can take $m_k=M_2$ for all $k$ in \eqref{TABk},  hence, $\beta_k$ and $\eta_k$ in \eqref{sTek0} become 
\beq\label{sTek}
 \beta_k=\frac{1}{2c_0}\max\left \{M_1+M_2 R_k,\frac{R_k^2}{2\tau_k}\right\}, \quad 
 \eta_k=1-\left(\frac{1}{2}\right)^{2\beta_k}.
 \eeq

Observe that $T_k=T_0+\delta_0 k$, hence, 
\beq \label{Tinv0}
[T_{k-1},T_k]=[T_k-\delta_*,T_k].
\eeq 
Thanks to \eqref{dcond5}, we have $d_k\le K$, which yields $R_k=2d_k\le 2K$ and 
$R_k^2/(2\tau_k)\le 2K^2/\delta_0$.
Therefore, 
\beq\label{the3}
\beta_k\le \bar\beta\eqdef \frac{1}{2c_0}\max\left\{M_1+2M_2 K,\frac{2K^2}{\delta_0}\right\}\text{ and }
    \eta_k\le \bar\eta\eqdef 1-(1/2)^{2\bar\beta}.
\eeq
Hence, same as \eqref{exkdec}, 
\beq\label{ux1}
\max_{\overline{Q}_{T_k}} u^+\le \left( \max_{\overline{Q}_{T_0}} u^+\right) e^{-\nu_0 k},\text{ where }\nu_0=-\ln\bar \eta.
\eeq
Clearly, \eqref{ux1} is also true for $k=0$.

For $t\ge T=T_0$, there is an integer $k\ge 1$ such that $t\in[T_{k-1},T_k)$. 
By Theorem \ref{maxcor} and \eqref{ux1}, we have
\beqs
\max_{\overline{Q}_{t}} u^+\le \max_{\overline{Q}_{T_{k-1}}} u^+\le  \left( \max_{\overline{Q}_{T_0}} u^+\right) e^{-\nu_0 (k-1)}
= \left( \max_{\overline{Q}_T} u^+\right) e^{-\nu_0 (\frac{T_k-T_0}{\delta_0}-1)}
\le \left( \max_{\overline{Q}_T} u^+\right) e^{\nu_0} e^{-\nu_0 \frac{t-T_0}{\delta_0}}.
\eeqs
Then  we obtain \eqref{udecay0} with $\nu=\nu_0/\delta_0$ and $C=e^{\nu_0+\nu T_0}$.

\medskip\noindent
\textbf{Case 2.} 
Let $\delta_0=\delta_*$ and fix a number 
 \beq\label{T0}
 T_0\ge \max\left\{ T,e^\frac{1}{\delta_0^2 K^2},e^{ 4(M_1+M_2)^2/K^2}\right\}.
 \eeq
With $T_0$ in \eqref{T0}, define, for $k\ge 1$, the numbers $\tau_k$, $T_k$ and $d_k$ as in Case 1, 
   \beq\label{drR}
r_k=\frac{\tau_k K^2\ln T_k}{d_k}\text{ and } 
R_k=d_k+r_k.
\eeq 
We again have the formula for $\beta_k$ as in \eqref{sTek}, and then define $\eta_k$ as in \eqref{sTek0}.
For $k\ge 1$,
by applying inequality \eqref{ugrowk} and Theorem \ref{maxcor}, we obtain
\beq\label{uiter}
\max_{\overline{Q}_{T_k}} u^+\le \left( \max_{\overline{Q}_{T_0}} u^+ \right) \eta_1\eta_2\ldots \eta_k
\le \left( \max_{\overline{Q}_{T}} u^+ \right) \eta_1\eta_2\ldots \eta_k.
\eeq

By \eqref{Tinv0} and \eqref{dcond60}, one has 
\beq\label{dk1}
d_k\le K (\ln T_k)^{1/2}.
\eeq 
On the one hand, squaring both sides of \eqref{dk1} gives a lower bound of $K^2\ln T_k$ which we use for the numerator of $r_k$ in \eqref{drR}. It results in
\beq\label{dk2}
r_k\ge \frac{\tau_k d_k^2}{d_k}=\delta_0 d_k \text{ and, hence, }
\ R_k=r_k+d_k\le (1+1/\delta_0) r_k.
\eeq
On the other hand, it is clear from the second relation in \eqref{drR} that $R_k\ge r_k$.
Then using \eqref{dk1} as an upper bound of $d_k$ which is the denominator of $r_k$ in \eqref{drR},  we deduce 
\beq\label{RkTk}
R_k\ge r_k\ge \frac{\tau_k K^2 \ln T_k}{K (\ln T_k)^{1/2}}=\delta_0 K (\ln T_k)^{1/2}
\ge \delta_0 K (\ln T_0)^{1/2}  .
\eeq 
Using the last two numbers in \eqref{T0} to bound $\ln T_0$ from below, we further estimate 
\beq\label{Rkdel}
R_k\ge \delta_0 \max\left\{\frac{1}{\delta_0},2(M_1+M_2)\right\} .
\eeq 
Consequently, $R_k\ge 1$ and $R_k/(2\delta_0)\ge M_1+M_2$, which in turn imply
    \beqs 
    \frac{R_k^2}{2\tau_k}=\frac{R_k}{2\delta_0}\cdot R_k \ge (M_1+M_2) R_k\ge M_1+M_2 R_k,
    \eeqs
Therefore, $\beta_k$ in \eqref{sTek} is 
\beq\label{bquot}
\beta_k=\frac1{2c_0}\cdot \frac{R_k^2}{2\tau_k}=\frac{(r_k+d_k)R_k}{4c_0\tau_k}
=\left(1+\frac{d_k}{r_k}\right) \frac{r_k R_k}{4c_0\tau_k}.
\eeq

With $d_k/r_k\in(0,1/\delta_0]$ from \eqref{dk2}, we have, same as inequality \eqref{exineq},
\beq \label{qexq}
\left(1+\frac{d_k}{r_k}\right)^{1+\frac{r_k}{d_k}}\le \left(1+\frac{1}{\delta_0}\right)^{(1+\delta_0)}=e^{\kappa_*} \text{ with }
\kappa_*=(1+\delta_0)\ln(1+1/\delta_0).
\eeq 
From \eqref{sTek0}, \eqref{bquot} and \eqref{qexq}, it follows that
\beq\label{etk1}
\eta_k
= 1-\left[\left(1+\frac{d_k}{r_k}\right)^{1+\frac{r_k}{d_k}}\right]^{-\frac{d_k R_k}{2c_0\tau_k}}
\le 1-e^{-\frac{\kappa_* d_k R_k}{2c_0\tau_k}}.    
\eeq
Note from the second property in \eqref{dk2} and the definition of $r_k$ in \eqref{drR}  that
\beq\label{dRkln}
\frac{d_k R_k}{2c_0\tau_k}\le  \frac{d_k(1+1/\delta_0) r_k}{2c_0\tau_k}=\frac{(1+1/\delta_0)K^2}{2c_0} \ln T_k.
\eeq
Combining this estimate with \eqref{etk1} gives 
\beq\label{eTk1}
\eta_k\le 1-e^{-\varep \ln T_k}=1-T_k^{-\varep}, \text{ where } \varep=\frac{(1+1/\delta_0)K^2\kappa_*}{2c_0}=K^2/K_*^2\in(0,1).
\eeq

Observe that  $T_j\ge T_0>1$ for all $j\ge 0$. Thus, the inequality \eqref{eTk1} implies
\begin{align}
\ln \prod_{j=1}^k\eta_j
&\le \sum_{j=1}^k \ln (1-T_j^{-\varep})
\le -\sum_{j=1}^k T_j^{-\varep}
\le - \sum_{j=1}^k\frac1{\tau_{j+1}}\int_{T_j}^{T_{j+1}} \tau^{-\varep}\d\tau\notag \\
&= -\frac1{\delta_0}\int_{T_1}^{T_{k+1}} \tau^{-\varep}\d\tau
=\frac{1}{\delta_0 (1-\varep)} (-T_{k+1}^{1-\varep}+T_{1}^{1-\varep}).
\label{lneta}
\end{align}
With $\theta$ defined in \eqref{theta} and $\varep$ defined in \eqref{eTk1}, we have 
$1-\varep=\theta$ and obtain
\beq\label{exkdec2}
\prod_{j=1}^k\eta_j \le \exp\left\{\frac1{\delta_0 \theta}T_{1}^{\theta}\right\} \exp\left\{-\frac1{\delta_0 \theta}T_{k+1}^{\theta}\right\}
=C \exp\{-\nu T_{k+1}^{\theta}\},
\eeq
where
\beq\label{nudef2}
\nu=\frac{1}{\delta_0 \theta} \text{ and } C=e^{\nu T_{1}^{\theta}}.
\eeq
Combining \eqref{uiter} with \eqref{exkdec2} gives
\beq\label{uQ4}
\max_{\overline{Q}_{T_k}} u^+
\le C\left( \max_{\overline{Q}_{T}} u^+ \right) \exp\{-\nu T_{k+1}^{\theta}\}.
\eeq

For $t\ge T_1$, there is $k\ge 1$ such that $t\in[T_{k},T_{k+1})$. By Theorem \ref{maxcor} and \eqref{uQ4}, 
\beq \label{uQ5}
\max_{\overline{Q}_{t}} u^+\le \max_{\overline{Q}_{T_{k}}} u^+ 
\le C\left( \max_{\overline{Q}_{T}} u^+ \right)  \exp\{-\nu T_{k+1}^{\theta}\}
\le C \left( \max_{\overline{Q}_{T}} u^+ \right) e^{-\nu t^{\theta}}.
\eeq 
For $t\in[T,T_1]$, by Theorem \ref{maxcor} again,
\beq\label{uQ6}
\max_{\overline{Q}_{t}} u^+\le \max_{\overline{Q}_{T}} u^+ \le \left( \max_{\overline{Q}_{T}} u^+ \right) e^{-\nu t^{\theta}}e^{\nu T_1^{\theta}}
=C \left( \max_{\overline{Q}_{T}} u^+ \right) e^{-\nu t^{\theta}}.
\eeq 
Combining \eqref{uQ5} and \eqref{uQ6}, we obtain \eqref{udecay0}.

\medskip\noindent
\textbf{Case 3.} 
Set 
\beq\label{szd0}
\sigma=\frac{z}{1-z}\text{ and }\delta_0= \left[\delta_*(1-z)^z\right]^\frac{1}{1-z},
\eeq
so that 
\beq\label{szd}
\frac{\sigma}{\sigma+1}=z\text{ and } \delta_*=\delta_0^\frac1{\sigma+1} (\sigma+1)^\frac\sigma{\sigma+1}.
\eeq
Same as \eqref{Ttau} in the proof of part \ref{st4i} of Theorem \ref{STthm1}, we take $T_0=T$, which gives $T_0>1$, and $\tau_k=\delta_0 k^\sigma$ for $k\ge 1$.
For $k\ge 1$, define $T_k$ and $d_k$ as in \eqref{Tksum} and  \eqref{Vdk},
\beq\label{dk3}
r_k=\frac{\tau_k K\ln T_k }{d_k} \text{ and } R_k=r_k+d_k.
\eeq 
Again, $\beta_k$ is now given as in \eqref{sTek}, and $\eta_k$ is defined by \eqref{sTek0}.

For estimates of $T_k$, $k$ and $\tau_k$, we have the same inequalities \eqref{tk1}--\eqref{tau01}.
By the inequality \eqref{tau01} and values in \eqref{szd}, one has
\beqs
\tau_k\le \delta_0^\frac1{\sigma+1} (\sigma+1)^\frac\sigma{\sigma+1} T_k^\frac\sigma{\sigma+1}=\delta_* T_k^z.
\eeqs
Thus, 
\beq \label{Tinv}
[T_{k-1},T_k]=[T_k-\tau_k,T_k]\subset [T_k-\delta_* T_k^z,T_k].
\eeq

Thanks to \eqref{Tinv} and \eqref{dcond6}, we have
\beq\label{rd3cond}
d_k\le K\ln  T_k.
\eeq
Note from \eqref{dk3} and \eqref{rd3cond} that
\beq\label{rkprop}
r_k\ge \tau_k,
\eeq 
and from \eqref{rd3cond} and \eqref{tk1} that
\beq\label{dkprop}
d_k\le K\ln\left\{T_0+\frac{\delta_0 }{\sigma+1}(k+1)^{\sigma+1}\right\}.
\eeq 

Because $\tau_k=\delta_0 k^\sigma$ and $T_k$ can be estimated by \eqref{tk1},  
there exists an integer  $k_0\ge 1$ such that one has, for all $k\ge k_0$,   
\begin{align} \label{rdp0}
\tau_k&\ge 1 \text{ and } \tau_k\ge K\ln\left\{T_0+\frac{\delta_0 }{\sigma+1}(k+1)^{\sigma+1}\right\},\\
\label{rdp2}
T_k&\le \frac{2\delta_0 }{\sigma+1}(k+1)^{\sigma+1}.
\end{align}

Let $k\ge k_0$.
By \eqref{rkprop}, \eqref{dkprop} and  the second property in \eqref{rdp0}, we have
\beq \label{rdp1}
r_k\ge d_k \text{ and, hence,  }
R_k=r_k+d_k\le 2r_k.
\eeq 
It is clear from \eqref{dk3} and  \eqref{rkprop} that
\beq \label{Rrtau}
R_k\ge r_k\ge \tau_k,
\eeq 
hence,
\beq\label{RMR}  \frac{R_k^2}{2\tau_k}
 \ge \frac{R_k \tau_k }{2\tau_k}=\frac{R_k}{2}.
\eeq  
Let $M=2(M_1+M_2+1)$.
Then $M\ge 1$ and $M\ge 2(M_1+M_2)$. Observe from the first inequality in \eqref{rdp0} and \eqref{Rrtau} that
 $R_k \ge 1$. Combining inequality \eqref{RMR} with these facts, we can estimate $\beta_k$ in \eqref{sTek} by  
\beq\label{be3} 
\beta_k\le  \frac1{2c_0}\max\left \{(M_1+M_2)R_k,\frac{MR_k^2}{2\tau_k}\right\}
\le \frac1{2c_0}\max\left \{\frac{M R_k}{2},\frac{MR_k^2}{2\tau_k}\right\}=\bar \beta_k\eqdef \frac{MR_k^2}{4c_0\tau_k}.
 \eeq 
 
For $k\ge k_0$, using \eqref{be3}, the fact $d_k\le r_k$ from \eqref{rdp1} and inequality \eqref{exineq},  we estimate $\eta_k$ in \eqref{sTek0} by
 \begin{align*}
\eta_k
&\le 1-\left(1+\frac{d_k}{r_k}\right)^{-2\bar \beta_k}
=1-\left(1+\frac{d_k}{r_k}\right)^{ -\frac{M(d_k+r_k)R_k}{2c_0\tau_k}}\\
&= 1-\left[\left(1+\frac{d_k}{r_k}\right)^{1+\frac{r_k}{d_k}}\right]^{-\frac{M d_kR_k}{2c_0\tau_k}}
\le 1-e^{-\frac{M\kappa_* d_kR_k}{2c_0\tau_k}},
 \end{align*}
 where $\kappa_*=\ln 4$.
For the last exponent, we have from  the second property in \eqref{rdp1} and definition of $r_k$ in \eqref{dk3} that
\beqs
\frac{d_kR_k}{2c_0\tau_k}\le  \frac{d_k (2r_k)}{2c_0\tau_k}=\frac{K}{c_0} \ln T_k.
\eeqs
It results in 
\beq\label{eTk2}
\eta_k\le 1-e^{-\varep \ln T_k}=1-T_k^{-\varep}, \text{ where } \varep=M\kappa_*K/c_0=K/K_*.
\eeq

For $k\ge j\ge k_0$, we have $T_j\ge T_{0}>1$, and, by \eqref{eTk2}, 
\beqs
\ln \prod_{j=k_0}^k\eta_j\le \sum_{j=k_0}^k \ln (1-T_j^{-\varep})
\le -\sum_{j=k_0}^k T_j^{-\varep}.
\eeqs
Using the fact $T_j\le \frac{2\delta_0 }{\sigma+1}(j+1)^{\sigma+1}$ from \eqref{rdp2},
 we obtain 
\begin{align}
\ln \prod_{j=k_0}^k\eta_j
&\le -\sum_{j=k_0}^k\left( \frac{2\delta_0 }{\sigma+1}(j+1)^{\sigma+1}\right)^{-\varep}
\le -\left( \frac{\sigma+1}{2\delta_0 }\right)^{\varep} \sum_{j=k_0}^k \int_{j+1}^{j+2} \tau^{-\varep(\sigma+1)} \d\tau \notag \\
&= -\left( \frac{\sigma+1}{2\delta_0 }\right)^{\varep} \int_{k_0+1}^{k+2} \tau^{-\varep(\sigma+1)}\d\tau.\label{lnprod}
\end{align}
Thanks to \eqref{Kc3}, \eqref{szd} and the formula of $\varep$ in \eqref{eTk2}, we have
\beqs 
K<(1-z)K_*=\frac{K_*}{\sigma+1}=\frac{K}{\varep(\sigma+1)}\text{ which  implies }\varep(\sigma+1)<1.
\eeqs 
Thus, computing the last integral in \eqref{lnprod} gives
\beq\label{lneta3}
\ln \prod_{j=k_0}^k\eta_j\le \left( \frac{\sigma+1}{2\delta_0 }\right)^{\varep}\frac1{1-\varep(\sigma+1)}\left[-(k+2)^{1-\varep(\sigma+1)}+(k_0+1)^{1-\varep(\sigma+1)}\right] .
\eeq 
It follows that
\beq \label{exkdec6}
\prod_{j=k_0}^k\eta_j\le C_0  \exp\{-\nu_0(k+2)^{1-\varep(\sigma+1)}\},
\eeq
where
\beq\label{nuc0}
\nu_0=\left( \frac{\sigma+1}{2\delta_0 }\right)^{\varep}\frac1{1-\varep(\sigma+1)}\text{ and }
C_0=\exp\left[ \nu_0 (k_0+1)^{1-\varep(\sigma+1)}\right]>1.
\eeq
Note from the first inequality in \eqref{Tkk0} for $k:=k+1$ that
\beq\label{Tkk5}
\left[\frac{\sigma+1}{\delta_0 }(T_{k+1}-T_0)\right]^\frac1{\sigma+1} \le k+2.
\eeq
From the choice of $\sigma$ in \eqref{szd0}, the formula of $\theta$ in \eqref{theta} and the formula of $\varep$ in \eqref{eTk2}, we have the following relations
\beq \label{esz}
\frac{1-\varep(\sigma+1)}{\sigma+1}=(1-z)-\varep=\theta,\ 
\theta+\varep=1-z,\ 
1-\varep(\sigma+1)=\theta(\sigma+1)=\frac{\theta}{1-z}.
\eeq 
Then we have from \eqref{exkdec6} and \eqref{Tkk5} that
\beq\label{exkdec5}
\begin{aligned}
\prod_{j=k_0}^k\eta_j
&\le C_0 \exp\left\{-\nu_0\left[\frac{\sigma+1}{\delta_0 }(T_{k+1}-T_0)\right]^\frac{1-\varep(\sigma+1)}{\sigma+1}\right\}  \\
&=C_0 \exp\left\{-\nu(T_{k+1}-T_0)^\theta\right\},
\end{aligned}
\eeq
where
\beq\label{nudef3}
\nu=\nu_0 \left(\frac{\sigma+1}{\delta_0 } \right)^\theta
={\frac{1}{2^{\varep}(1-\varep(\sigma+1))}}\left(\frac{\sigma+1}{\delta_0 } \right)^{\theta+\varep}
={\frac{1-z}{2^{K/K_*}\theta }}\left(\frac{1}{\delta_0(1-z) } \right)^{1-z}. 
\eeq
(We  have used \eqref{esz} for the last two identities.)
By inequality \eqref{ugk0} for $k_*=k_0-1\ge 0$, estimate \eqref{exkdec5} and Theorem \ref{maxcor}, we have 
\beq\label{exkdec8}
\max_{\overline{Q}_{T_k}}u^+
\le \left(\prod_{j=k_0}^k \eta_j\right)\left(\max_{\overline{Q}_{T_{k_0-1}}} u^+\right)
\le C_0 \exp\left\{-\nu(T_{k+1}-T_0)^\theta\right\} \left(\max_{\overline{Q}_{T_{0}}} u^+\right) .
\eeq

Let $t$ be any number in $[T_{k_0},\infty)$. There is an integer $k\ge k_0$ such that $t\in[T_{k},T_{k+1})$. Thanks to Theorem \ref{maxcor} applied to $t_1:=T_k$ and $t_2:=t$,  
inequality \eqref{exkdec8}, and the facts $T_0=T$ and $T_{k+1}>t$,   we have
\beqs
\max_{\overline{Q}_t}u^+\le \max_{\overline{Q}_{T_{k}}}u^+
\le C_0\left(\max_{\overline{Q}_{T}} u^+\right) \exp\{-\nu(t-T_0)^\theta\}.
\eeqs
With $\theta<1$, we have $(t-T_0)^\theta\ge t^\theta-T_0^\theta$. Therefore,
\beq\label{uQ8}
\max_{\overline{Q}_t}u^+
\le C_0 \left(\max_{\overline{Q}_{T_{0}}} u^+\right) \exp\{-\nu t^\theta+\nu T_0^\theta\}
=  C_0 e^{\nu T_0^\theta} \left(\max_{\overline{Q}_{T}} u^+\right) e^{-\nu t^\theta}.
\eeq

For $t\in[T,T_{k_0}]$, by Theorem \ref{maxcor},
\beq\label{uQ9}
\max_{\overline{Q}_{t}} u^+\le \max_{\overline{Q}_{T}} u^+ \le \left( \max_{\overline{Q}_{T}} u^+ \right) e^{-\nu t^{\theta}}e^{\nu T_{k_0}^{\theta}}.
\eeq 

Combining \eqref{uQ8} and \eqref{uQ9} yields \eqref{udecay0} with 
$C=\max\{C_0 e^{\nu T_0^\theta},e^{\nu T_{k_0}^{\theta}}\}$.
The proof of Part (i) is complete.

\medskip\noindent
Part (ii). We apply part (i) to $u$ and $(-u)$  to derive estimates for $u^+$ and $u^-$,  and then use relation \eqref{maxrel}. We omit the details.
\end{proof}

It is worth noticing that condition \eqref{dcond6} is not a consequence of \eqref{dcond60} just based on their right-hand sides. It is due to the much larger interval $[t-\delta_* t^z,t]$ required on the left-hand side of \eqref{dcond6}  compared to $[t-\delta_*,t]$ in \eqref{dcond60}.

The following result is similar to Theorem \ref{STthm2}, but based on Cases 2 and 3 of Theorem \ref{STthm3} instead of part \ref{st4ii} of Theorem \ref{STthm1}.

\begin{corollary} \label{cor2}
Under Assumptions \ref{secondA} and \ref{condB}, assume either
\begin{enumerate}[label=\rnum]
    \item there is $\delta_*>0$ such that      
  \beq\label{dco1}
{\rm diam}\left(\bar  Q_{[t-\delta_*,t]}\right)=o ((\ln t)^{1/2}) \text{ as }t\to\infty,
\eeq
or
    \item  there are $\delta_*>0$ and $z\in(0,1)$ such that
\beq\label{dco2}
{\rm diam}\left(\overline{Q}_{[t-\delta_* t^z,t]}\right)=o (\ln t)\text{ as }t\to\infty.
\eeq
\end{enumerate}
Let $\theta_*=1$ in the case \eqref{dco1}, and 
$\theta_*=1-z$ in the case \eqref{dco2}.
Then one can replace the estimate \eqref{udecay0}, resp., \eqref{udecay1}, with
\beq \label{at1}
\forall \theta\in(0,\theta_*):\max_{\overline{Q}_t}u^+=\mathcal O(e^{-t^\theta}) \text{ as }t\to\infty,
\eeq 
resp.,
\beq \label{at2}
\forall \theta\in(0,\theta_*):\max_{\overline{Q}_t}|u|=\mathcal O(e^{-t^\theta})\text{ as }t\to\infty. 
\eeq 
\end{corollary}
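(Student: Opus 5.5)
The plan is to reduce Corollary \ref{cor2} to Cases 2 and 3 of Theorem \ref{STthm3} by exploiting the fact that the little-$o$ hypotheses let us take the constant $K$ as small as we like, at the price of a larger starting time $T$. I read the diameter conditions \eqref{dco1}, \eqref{dco2} as referring to $\mathbb P_1(\overline Q_{[\cdot,\cdot]})$, as in \eqref{dcond60} and \eqref{dcond6}.

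Fix $\theta\in(0,\theta_*)$. In case \eqref{dco1} we have $\theta_*=1$. Keep $\delta_*$ from the hypothesis and let $K_*$ be as in \eqref{Kc2}. Choose $K\in(0,K_*)$ so small that $1-K^2/K_*^2>\theta$. By \eqref{dco1} there is $T\ge\max\{\delta_*,e\}$ such that \eqref{dcond60} holds for all $t\ge T$ with this $K$. Case 2 of Theorem \ref{STthm3} then gives \eqref{udecay0} with exponent $\theta'=1-K^2/K_*^2>\theta$ and constants $C$, $\nu>0$.

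In case \eqref{dco2} we have $\theta_*=1-z$. Let $K_*$ be as in \eqref{Kc3} and pick $K\in(0,(1-z)K_*)$ with $\theta'=1-z-K/K_*>\theta$. By \eqref{dco2} there is $T\ge\max\{\delta_*^{1/(1-z)},e\}$ such that \eqref{dcond6} holds for $t\ge T$. Case 3 of Theorem \ref{STthm3} then again yields \eqref{udecay0} with exponent $\theta'$.

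In both cases $\max_{\overline Q_T}u^+$ is finite, because $\overline Q_T$ is compact by \eqref{OKbQ}, and $T$, $C$, $\nu$ do not depend on $t$. Therefore
\[
\max_{\overline Q_t}u^+\le C'\,e^{-\nu t^{\theta'}}\quad\text{for } t\ge T .
\]
Since $\theta'>\theta$, we have $\nu t^{\theta'}-t^\theta\to\infty$, so $e^{-\nu t^{\theta'}}=\mathcal O(e^{-t^\theta})$. This gives \eqref{at1}.

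For \eqref{at2} I would either use part (ii) of Theorem \ref{STthm3} directly, or apply \eqref{at1} to $u$ and $-u$ and combine them through \eqref{maxrel}.

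I expect no step to be a real obstacle. The only points needing care are, first, that the enlarged $T$ still meets the lower bounds in Cases 2 and 3, which we can always arrange by taking $T$ larger. Second, the decay in \eqref{udecay0} is stated from time $T$ onward, which suffices because \eqref{at1} is only an asymptotic statement as $t\to\infty$.
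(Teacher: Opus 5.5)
Your proposal is correct and matches the paper's proof: use the little-$o$ hypothesis to choose $K$ small enough that the exponent $1-K^2/K_*^2$ (Case 2) or $1-z-K/K_*$ (Case 3) exceeds $\theta$, then take $T$ large so that \eqref{dcond60} or \eqref{dcond6} holds, which is legitimate because $K_*$ does not depend on $T$, and apply Theorem \ref{STthm3}. Your treatment of part (ii), either through \eqref{udecay1} or through $u$, $-u$ and \eqref{maxrel}, is also consistent with the paper's argument.
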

\begin{proof}
Let $K_*$ be defined by \eqref{Kc2} in the case \eqref{dco1}, and 
by \eqref{Kc3} in the case \eqref{dco2}.
Let $\theta\in(0,\theta_*)$.

We prove \eqref{at1} first.
Take $K>0$ sufficiently small so that the number 
\beq \label{Ket}
\eta\eqdef
\begin{cases}
    \theta_*-K^2/K_*^2&\text{ in the case \eqref{dco1}},\\
    \theta_*-K/K_* &\text{ in  the case \eqref{dco2}},
\end{cases} 
\text{ is larger that $\theta$.}
\eeq 

Let $T>0$ be sufficiently large such that 
  \beq\label{dco1T}
  T\ge\max\{\delta_*, e\}\text{ and }
{\rm diam}\left(\bar  Q_{[t-\delta_*,t]}\right)\le K(\ln t)^{1/2} \text{ for all  }t\ge T
\eeq
in case the \eqref{dco1}, or 
\beq\label{dco2T}
T\ge \max\{\delta_*^\frac1{1-z},e\}\text{ and }
{\rm diam}\left(\overline{Q}_{[t-\delta_* t^z,t]}\right)\le K \ln t\text{ for all  }t\ge T
\eeq 
in case the \eqref{dco2}.
(Because $K_*$ is \textit{independent} of $T$ in Theorem \ref{STthm3}, the choices of the above $K$ in \eqref{Ket} and then $T$ in \eqref{dco1T} and \eqref{dco2T} are valid.)
Applying Theorem \ref{STthm3} for Case 2 when \eqref{dco1} holds, or Case 3 when \eqref{dco2} holds, we have  from \eqref{udecay0} that
 \beqs 
\max_{\overline{Q}_t}u^+=\mathcal O(e^{-\nu t^\eta})=\mathcal O(e^{-t^\theta}) \text{ as }t\to\infty. 
\eeqs 
Thus, we obtain \eqref{at1}.

The proof of \eqref{at2} is similar with the use of estimate \eqref{udecay1} for $\theta:=\eta$ instead of \eqref{udecay0}.
\end{proof}

\begin{example}\label{eg2}
Considering $n=1$, let $\kappa> 0$, $z\in[0,1)$ and $p\ge 0$.
Take $X(t)=\kappa t^m (\ln t)^q$, where  
\beq \label{mzpq}
(m=1-z,\ 0\le q<p)
\text{ or }(m=1-z,\ q=p,\text{ small }\kappa)
\text{ or } (0\le m<1-z,\  q\ge 0).
\eeq 
With $T\ge e$ and $\mathcal K>0$,  let
    \beqs
 Q=\{(x,t)\in\R^2:  t>T,\   |x-X(t)|< \mathcal K (\ln t)^p\}.
    \eeqs
    By the virtue of Proposition \ref{gset}, $Q$ satisfies Assumption \ref{GGamcond}.
    Next, we examine $Q$ for Theorem \ref{STthm3}.
Set $\delta_*=1$. 
Given any number  $K>0$. We claim, for  sufficiently large $T$ and small $\mathcal K$, that 
\begin{enumerate}[label=\rnum]
    \item\label{de1}  the condition \eqref{dcond60} is satisfied for $z=0$, $p=1/2$, and
    \item\label{de2} the condition \eqref{dcond6} is satisfied for $z\in(0,1)$, $p=1$.
\end{enumerate}

Indeed, let $Z(t)=\mathbb P_1 \left(\overline{Q}_{\left[t-t^z,t\right]}\right)$ which is the set on the left-hand side of \eqref{dcond60} in case \ref{de1}, and of \eqref{dcond6} in case \ref{de2}. 
For each $\tau$,
\beq\label{P1int}
\mathbb P_1(\overline{Q}_\tau)=\left[X(\tau)-\mathcal K(\ln \tau)^p,X(\tau)+\mathcal K(\ln \tau)^p\right].
\eeq

Clearly, $X(\tau)+\mathcal K(\ln \tau)^p$ is increasing. Hence,  the right end point of $Z(t)$ is $X(t)+\mathcal K(\ln t)^p$.

Regarding the left end point in \eqref{P1int}, denote $g(\tau)=X(\tau)-\mathcal K(\ln \tau)^p$. For large $\tau $, by computing the derivative of $g(\tau)$, we have the function $g(\tau)$ is increasing for ($m>0$) or ($m=0$, $q>p$),  and decreasing for ($m=0$, $q<p$), and monotone for ($m=0$, $q=p$).
In all cases,  $g(\tau)$ is monotone. Hence, the left end point of $Z(t)$ is either $X(t)-\mathcal K(\ln t)^p$ if $g(\tau)$  is decreasing, or 
$X(t-t^z)-\mathcal K(\ln (t-t^z))^p$ if $g(\tau)$  is increasing.
Therefore, the diameter $d(t)$ of $Z(t)$ is either
\beq\label{df1}
d(t)=[X(t)+\mathcal K(\ln t)^p]-[X(t)-\mathcal K(\ln t)^p]
=2\mathcal K(\ln t)^p,
\eeq
or 
\begin{align}
d(t)&=[X(t)+\mathcal K(\ln t)^p]-[X(t-t^z)-\mathcal K(\ln (t-t^z))^p]\notag \\
&=\mathcal K(\ln t)^p+\mathcal K(\ln (t-1))^p+X(t)-X(t-t^z).\label{df2}
\end{align} 
Considering \eqref{df2}, one has, by the Mean Value Theorem,
\beqs
0<X(t)-X(t-t^z)=\mathcal O\left(\kappa\left[t^{m-1}(\ln t)^q+t^{m-1}(\ln t)^{q-1}\right]\cdot t^z\right)\text{ as } t\to\infty.
\eeqs
Thus, for both \eqref{df1} and \eqref{df2},
\beq\label{df3}
d(t)\le 2\mathcal K(\ln t)^p+\mathcal O(\kappa t^{m-1+z}(\ln t)^q).
\eeq
With \eqref{df3} and \eqref{mzpq}, we obtain the statements (a) and (b). 
\end{example}

The next result improves the decaying rate in \eqref{udecay0} and \eqref{udecay1} under a significantly slower growth than the one in \eqref{dcond60}.

\begin{theorem}\label{STthm4}
Under Assumptions \ref{secondA} and \ref{condB}, assume there are $\delta_0>0$, $T\ge \max\{\delta_0,e^e\}$ and $K>0$ such that, for any $t\ge T$,  
    \beq\label{dcond9}
{\rm diam}\left(\mathbb P_1 \left(\bar  Q_{[t-\delta_0,t]}\right)\right)\le  K (\ln\ln t)^{1/2}.
\eeq
Set 
\beq\label{epdef}
\varep=\frac{(1+\delta_0)^2\ln(1+1/\delta_0)K^2}{2c_0\delta_0}.
\eeq
Then there is a positive number  $C$ depending on $c_0$, $M_1$, $M_2$, $K$ and $T$ such that the following statements hold true.
\begin{enumerate}[label=\tnum]
\item If $u\in C(\overline{Q})\cap C_{x,t}^{2,1}(\widetilde Q)$ satisfies $Lu\le 0$ in $\widetilde Q$ and $u\le 0$ on $\Gamma(Q)\setminus \overline{Q}_0$, then 
\beq\label{udecay8c}
\max_{\overline{Q}_t} u^+\le C\left(\max_{\overline{Q}_T}u^+\right)\exp\left\{-\frac{1}{\delta_0 }\int_e^t (\ln \tau)^{-\varep}\d\tau\right\}\text{ for all }t\ge T.
\eeq

\item If $u\in C(\overline{Q})\cap C_{x,t}^{2,1}(\widetilde Q)$ satisfies $Lu= 0$ in $\widetilde Q$ and $u=0$ on $\Gamma(Q)\setminus \overline{Q}_0$, then 
\beq\label{udecay8d}
\max_{\overline{Q}_t}|u|\le C\left(\max_{\overline{Q}_T}|u|\right)\exp\left\{-\frac{1}{\delta_0 }\int_e^t (\ln \tau)^{-\varep}\d\tau\right\}\text{ for all }t\ge T.
\eeq
\end{enumerate}
\end{theorem}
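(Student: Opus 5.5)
Follow Case 2 of Theorem \ref{STthm3}, with $\ln T_k$ replaced by $\ln\ln T_k$. Take constant time steps $\tau_k=\delta_0$, so $T_k=T_0+\delta_0 k$ and $[T_{k-1},T_k]=[T_k-\delta_0,T_k]$. Here $T_0\ge T$ is fixed large, so that $\ln\ln T_0$ dominates the constants involving $M_1,M_2,\delta_0,K$. By \eqref{dcond9}, $d_k\le K(\ln\ln T_k)^{1/2}$. Choose
\beqs
r_k=\frac{\tau_k K^2\ln\ln T_k}{d_k},\qquad R_k=r_k+d_k .
\eeqs
As in \eqref{dk2}--\eqref{RkTk} this gives $r_k\ge\delta_0 d_k$, hence $R_k\le(1+1/\delta_0)r_k$. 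It also gives $R_k\ge \delta_0K(\ln\ln T_0)^{1/2}$. By the choice of $T_0$, the latter forces $R_k\ge1$ and $R_k/(2\delta_0)\ge M_1+M_2$. Then $\beta_k=R_k^2/(4c_0\tau_k)$ exactly as in \eqref{bquot}.

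Next bound $\eta_k$ via \eqref{qexq}--\eqref{etk1}, with $\kappa_*=(1+\delta_0)\ln(1+1/\delta_0)$:
\beqs
\eta_k\le 1-\exp\Big(-\frac{\kappa_* d_kR_k}{2c_0\tau_k}\Big),\qquad \frac{d_kR_k}{2c_0\tau_k}\le\frac{(1+1/\delta_0)K^2}{2c_0}\ln\ln T_k .
\eeqs
The product $\kappa_*(1+1/\delta_0)K^2/(2c_0)$ is exactly $\varep$ in \eqref{epdef}. Hence $\eta_k\le 1-(\ln T_k)^{-\varep}$, and $\ln\eta_k\le-(\ln T_k)^{-\varep}$. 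Note that no restriction $\varep<1$ is needed; this differs from Case 2 of Theorem \ref{STthm3}, since $(\ln\tau)^{-\varep}$ is not integrable at infinity for any $\varep$.

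For the sum, use that $\tau\mapsto(\ln\tau)^{-\varep}$ is decreasing. Then $(\ln T_j)^{-\varep}\ge\frac1{\delta_0}\int_{T_j}^{T_{j+1}}(\ln\tau)^{-\varep}\d\tau$, so
\beqs
\prod_{j=1}^k\eta_j\le\exp\Big\{-\frac1{\delta_0}\int_{T_1}^{T_{k+1}}(\ln\tau)^{-\varep}\d\tau\Big\}.
\eeqs
Combining with \eqref{ugrowk} and Theorem \ref{maxcor} gives $\max_{\overline Q_{T_0}}u^+\le\max_{\overline Q_T}u^+$. For $t\in[T_k,T_{k+1})$, Theorem \ref{maxcor} together with $T_{k+1}>t$ yields
\beqs
\max_{\overline Q_t}u^+\le\Big(\max_{\overline Q_T}u^+\Big)\exp\Big\{\frac1{\delta_0}\int_e^{T_1}(\ln\tau)^{-\varep}\d\tau\Big\}\exp\Big\{-\frac1{\delta_0}\int_e^{t}(\ln\tau)^{-\varep}\d\tau\Big\}.
\eeqs
For $t\in[T,T_1]$, apply Theorem \ref{maxcor} directly and absorb the bounded factor into $C$. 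This proves (i). Part (ii) follows by applying (i) to $\pm u$ and using \eqref{maxrel}.

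The main obstacle is the step showing that the maximum in $\beta_k$ is attained by $R_k^2/(2\tau_k)$, i.e.\ that $R_k$ is large. Here $R_k$ grows only like $(\ln\ln T_k)^{1/2}$, so the argument relies entirely on choosing $T_0\ge \exp\exp\big(\max\{1/(\delta_0^2K^2),4(M_1+M_2)^2/K^2\}\big)$. This is why $C$ depends on $K$, $M_1$, $M_2$ and $T$; if $T$ is itself smaller, the gap $[T,T_0]$ is absorbed into $C$ via Theorem \ref{maxcor}. The second delicate point is that $\eta_k<1$ is still guaranteed, since $\ln T_k>1$ for $T_k\ge e^e$.
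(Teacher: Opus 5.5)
Your proposal is correct and follows essentially the same route as the paper's proof. Both use constant steps $\tau_k=\delta_0$, the choice $r_k=\tau_kK^2\ln\ln T_k/d_k$, the Case 2 machinery of Theorem \ref{STthm3} to get $\eta_k\le 1-(\ln T_k)^{-\varep}$, an integral comparison for $\sum_j(\ln T_j)^{-\varep}$ using monotonicity, and Theorem \ref{maxcor} for intermediate times and for $t\in[T,T_1]$. Your explicit threshold $\ln\ln T_0\ge\max\{1/(\delta_0^2K^2),\,4(M_1+M_2)^2/K^2\}$ is exactly what is needed to get $R_k\ge1$ and $R_k/(2\delta_0)\ge M_1+M_2$; it is slightly cleaner than the paper's condition \eqref{TT0}, which appears to carry a stray factor $\delta_0$.
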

\begin{proof}
We prove part (i) first.
We fix a number $T_0\ge T$ sufficiently large such that 
\beq\label{TT0}
(\ln\ln T_0)^{1/2}\ge \frac{\delta_0}{K} \max \left\{ \frac1{\delta_0},2(M_1+M_2)\right\}.
\eeq
Same as in the proofs for Cases 1 and 2  of Theorem \ref{STthm3}, let $\tau_k=\delta_0$, for $k\ge 1$, and define $T_k$ as in \eqref{Tksum},  $d_k$ as in  \eqref{Vdk},  and set 
\beq\label{rRlln}
r_k=\frac{\tau_k K^2\ln\ln T_k}{d_k},\quad  R_k=r_k+d_k,
\eeq
and define the numbers $\beta_k$ and $\eta_k$ as in \eqref{sTek0}.
We again have $\beta_k$ rewritten as in \eqref{sTek}.

Clearly, $T_k=T_0+\delta_0 k$ and $[T_{k-1},T_k]=[T_k-\delta_0,T_k]$.
Then we have from \eqref{dcond9} that 
\beq \label{dklln}
d_k\le K (\ln\ln T_k)^{1/2}.
\eeq 
With \eqref{rRlln} and \eqref{dklln}, we have, same as \eqref{dk2} and \eqref{RkTk},
\beq\label{rkp2}
r_k\ge \frac{\tau_k d_k^2}{d_k}=\delta_0 d_k, \quad R_k\le (1+1/\delta_0)r_k,\quad R_k\ge  r_k\ge \delta_0 K(\ln\ln T_k)^{1/2}.
\eeq
Using \eqref{TT0} and the last lower bound of $R_k$ in \eqref{rkp2}, we have
\beqs
R_k\ge \delta_0  K(\ln\ln T_0)^{1/2}\ge \delta_0 \max \left\{ \frac1{\delta_0},2(M_1+M_2)\right\}.
\eeqs
We then proceed same as from \eqref{Rkdel} to \eqref{etk1}.
As a result, one has, same as estimate \eqref{etk1}, 
 \beq\label{etkap}
\eta_k\le 1-e^{-\frac{\kappa_* d_k R_k}{2c_0}},\text{ where }\kappa_*=(1+\delta_0)\ln(1+1/\delta_0).
\eeq
Using the first inequality of \eqref{dRkln} and then definition of $r_k$ in \eqref{rRlln}, we have
\beq\label{quotdR}
\frac{d_k R_k}{2c_0}\le  \frac{ d_k(1+1/\delta_0) r_k}{2c_0\tau_k}=\frac{(1+1/\delta_0)K^2}{2c_0} \ln\ln T_k.
\eeq
Observe from \eqref{epdef} and the formula of $\kappa_*$ in \eqref{etkap} that  $\kappa_* (1+1/\delta_0)K^2/(2c_0)=\varep$.
Combining this with \eqref{etkap} and \eqref{quotdR}, we obtain 
\beqs
\eta_k\le 1-e^{-\varep \ln\ln T_k}=1-(\ln T_k)^{-\varep}.
\eeqs
It follows that
\begin{align*}
\ln \prod_{j=1}^k\eta_j
&\le \sum_{j=1}^k \ln (1-(\ln T_j)^{-\varep})
\le -\sum_{j=1}^k (\ln T_j)^{-\varep}\\
&\le -\sum_{j=1}^k  \frac{1}{\tau_{j+1}}\int_{T_j}^{T_{j+1}} (\ln \tau)^{-\varep}\d\tau
=-\frac{1}{\delta_0 }\int_{T_1}^{T_{k+1}} (\ln \tau)^{-\varep}\d\tau.
\end{align*}
Combining this estimate with \eqref{ugrowk} gives
\begin{align}
\max_{\overline{Q}_{T_k}} u^+&\le \left(\max_{\overline{Q}_{T_0}}u^+\right)\prod_{j=1}^k\eta_j 
\le  \left(\max_{\overline{Q}_{T_0}}u^+\right)\exp\left\{-\frac{1}{\delta_0 }\int_{T_1}^{T_{k+1}} (\ln \tau)^{-\varep}\d\tau\right\}\notag \\
&=C \left(\max_{\overline{Q}_{T_0}}u^+\right)\exp\left \{-\frac{1}{\delta_0 }\int_{e}^{T_{k+1}} (\ln \tau)^{-\varep}\d\tau\right\},\label{uQ10}
\end{align}
  where
$  C=\exp\left \{\frac{1}{\delta_0 }\int_{e}^{T_{1}} (\ln \tau)^{-\varep}\d\tau\right\}.$

    For $t\ge T_1$, let $k\ge 1$ be the integer such that $t\in[T_{k},T_{k+1})$. By Theorem  \ref{maxcor} and estimate \eqref{uQ10}, one has 
\begin{align}
\max_{\overline{Q}_t}u^+
&\le \max_{\overline{Q}_{T_{k}}}u^+
\le C \left(\max_{\overline{Q}_{T_0}}u^+\right)\exp\left\{-\frac{1}{\delta_0 }\int_{e}^{T_{k+1}} (\ln \tau)^{-\varep}\d\tau\right\}\notag \\
&\le C \left(\max_{\overline{Q}_T}u^+\right)\exp\left\{-\frac{1}{\delta_0 }\int_{e}^{t} (\ln \tau)^{-\varep}\d\tau\right\}.\label{uQ11}
\end{align}
For $t\in[T,T_1]$, we have, by Theorem  \ref{maxcor} again, 
\begin{align}
\max_{\overline{Q}_t}u^+
&\le \left(\max_{\overline{Q}_T}u^+\right) \
\le \left(\max_{\overline{Q}_T}u^+\right)
\cdot \exp\left\{\frac{1}{\delta_0 }\int_e^{T_1} (\ln \tau)^{-\varep}\d\tau\right\}\exp\left\{-\frac{1}{\delta_0 }\int_e^t (\ln \tau)^{-\varep}\d\tau\right\}\notag  \\
&= \left(\max_{\overline{Q}_T}u^+\right)\cdot C\exp\left\{-\frac{1}{\delta_0 }\int_e^t (\ln \tau)^{-\varep}\d\tau\right\}.\label{uQ12}
\end{align}
Combining \eqref{uQ11} and \eqref{uQ12} gives \eqref{udecay8c}.

Part (ii) is obtained by applying part (i) to $u$ and $(-u)$.
\end{proof}

\begin{remark}\label{hrate}
Denote $h(t)=\exp\left\{-\frac{1}{\delta_0 }\int_e^t (\ln \tau)^{-\varep}\d\tau\right\}$ which is the decaying function in \eqref{udecay8c} and \eqref{udecay8d}. Then one has, for all $M>0$ and $\theta\in(0,1)$, that
\beqs
e^{-Mt}=\mathcal O(h(t)) \text{ and }
h(t)=\mathcal O(e^{-t^\theta})\text{ as }t\to\infty.
\eeqs
Therefore, the decaying rate $h(t)$ obtained in Theorem \ref{STthm4} is an intermediate one between those in Case 1 and Cases 2, 3 of Theorem \ref{STthm3}. This is caused by the intermediate growth rate of the diameters  in \eqref{dcond9} compared to those in \eqref{dcond5}, \eqref{dcond60} and \eqref{dcond6} of Theorem \ref{STthm3}.    
\end{remark}

\section{Inhomogeneous problems}\label{inhomsec}

Let $Q$ be  a subset of $\R^n\times(0,\infty)$ that satisfies Assumption \ref{GGamcond} with $\mathbb P_2(\overline{Q})=[0,\infty)$.
Recall that $\mathcal S_J$ and $\mathcal S_t$ are defined in \eqref{SJdef}.
Let  $A:\widetilde Q\to \mathcal M^{n\times n}_{{\rm sym}}$ and $b:\widetilde Q\to \R^n$ be two given  functions. Define the linear operators $L$ by \eqref{Ltil}, and $L_0$ by \eqref{Lz}.

In all statements in the remainder of this section, $u$ is any function in $C(\overline{Q})\cap C_{x,t}^{2,1}(\widetilde Q)$.

We apply  Lemma \ref{lemG2} to restrictions of $Q$ on consecutive time intervals to obtain some recursive estimates. It results in the following counter part of \cite[Proposition 4.5]{HI4}.

\begin{theorem}\label{genlem}
Under Assumptions \ref{secondA},
let $T_k$, $m_k$, $\beta_k$ and $\eta_k$ be the same as those from \eqref{Tksum} to \eqref{sTek0}.
Suppose there is a function $F\in C([0,\infty),[0,\infty))$ such that 
  \beq\label{LwF}   
   Lu(x,t)\le F(t)\text{ for all } (x,t)\in \widetilde Q.
  \eeq 
Define 
\begin{align}
J_k&=\max_{\overline{Q}_{T_k}} u^+\text{ for $k\ge 0$,}\notag \\
 \label{lamk}
\Lambda_0&=J_0\text{ and }
\Lambda_k=\sup_{\mathcal S_{(T_{k-1},T_k]}} u^+ +\int_{T_{k-1}}^{T_k}F(t)\d t
 \text{ for $k\ge 1$.  }
\end{align}
Then one has
\beq\label{wv1}
\max_{\overline{Q}_{[T_{k-1},T_k]}} u^+
\le J_{k-1}+\Lambda_k  \text{ for all }k\ge 1,
\eeq 
\beq \label{Jkstar}
J_k\le \eta_k\eta_{k-1}...\eta_{k_*+1} J_{k_*} +\sum_{m=k_*+1}^k \left[ \left(\prod_{j=m+1}^k \eta_j\right) \Lambda_m\right]  \text{ for all } k>k_*\ge 0,
\eeq 
and, consequently,
\beq \label{Jkest0}
J_k\le  \sum_{m=0}^k \left[ \left(\prod_{j=m+1}^k \eta_j\right) \Lambda_m\right] 
\text{ for all }k\ge 1.
\eeq
\end{theorem}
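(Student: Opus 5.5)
The plan is to work one time slab $[T_{k-1},T_k]$ at a time, using Theorem \ref{maxprin2}\ref{MP1} for \eqref{wv1} and Theorem \ref{lemG2} for the one-step recursion, and then to unroll the recursion by induction.

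\emph{Proof of \eqref{wv1}.} Fix $k\ge1$ and apply Theorem \ref{maxprin2}\ref{MP1} with $I=[T_{k-1},T_k]$ and $f_1=F$. This gives, for $t\in[T_{k-1},T_k]$,
\[
\max_{\overline{Q}_t}u^+\le \max\Big\{J_{k-1},\ \sup_{\mathcal S_{(T_{k-1},t]}}u^+\Big\}+\int_{T_{k-1}}^tF\,\d\tau .
\]
The maximum of two nonnegative numbers is at most their sum. Enlarging $\mathcal S_{(T_{k-1},t]}$ to $\mathcal S_{(T_{k-1},T_k]}$ and using $F\ge0$ to extend the integral to $T_k$, we get $\max_{\overline{Q}_t}u^+\le J_{k-1}+\Lambda_k$ for every $t\in[T_{k-1},T_k]$. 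Since $\overline{Q}_{[T_{k-1},T_k]}$ is the union of the slices $\overline{Q}_t$ (and is compact by Lemma \ref{moreE}), taking the maximum over $t$ yields \eqref{wv1}.

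\emph{One-step recursion.} Apply Theorem \ref{lemG2} with $t_*=T_{k-1}$, $T=\tau_k$, $x_*=x_k$ chosen as in \eqref{xchoice}, and drift bound $m_*=m_k$ from \eqref{TABk}. Then $r=r_k$ and $R=R_k$, so $\beta$ and $\eta_*$ in \eqref{sTe1} coincide with $\beta_k$ and $\eta_k$ in \eqref{sTek0}. Note that $R^2/(4c_0T)=R_k^2/(2c_0\cdot 2\tau_k)$, which matches. The hypothesis $Lu\le F$ on $\widetilde Q_{(T_{k-1},T_k]}$ follows from \eqref{LwF}, and $Q$ satisfies Assumption \ref{GGamcond}. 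Estimate \eqref{Tgrow4} then reads
\[
J_k\le \eta_kJ_{k-1}+\Lambda_k ,\qquad k\ge1 .
\]

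\emph{Iteration.} Induct on $k>k_*$. The base case $k=k_*+1$ is exactly the one-step recursion, with empty product equal to $1$ for $m=k$. For the induction step, multiply the bound for $J_{k-1}$ by $\eta_k\in(0,1)$ and add $\Lambda_k$. This shifts every product to $\prod_{j=m+1}^k\eta_j$ and adds the $m=k$ term, giving \eqref{Jkstar}. Taking $k_*=0$ and using $J_0=\Lambda_0$ absorbs the first term as the $m=0$ summand, which is \eqref{Jkest0}.

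The only delicate point is checking that the parameters of Theorem \ref{lemG2} reproduce $\beta_k$ and $\eta_k$ exactly and that its hypotheses hold on each slab: $[T_{k-1},T_k]\subset\mathbb P_2(\overline Q)=[0,\infty)$, and $x_k\notin V_k$ with $V_k$ equal to the set $V$ of \eqref{Vset}. Everything else is routine bookkeeping.
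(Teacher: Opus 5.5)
Your proposal is correct and follows the paper's proof. As in the paper, you apply Theorem \ref{maxprin2}\ref{MP1} on each slab $[T_{k-1},T_k]$ to get \eqref{wv1}, apply Theorem \ref{lemG2} with $x_*=x_k$, $m_*=m_k$, $T=\tau_k$ to get $J_k\le\eta_kJ_{k-1}+\Lambda_k$, and iterate using $J_0=\Lambda_0$ to get \eqref{Jkstar}--\eqref{Jkest0}; your slice-by-slice derivation of \eqref{wv1} and your explicit check that \eqref{sTe1} reproduces \eqref{sTek0} only spell out steps the paper states directly.
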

\begin{proof}
Thanks to \eqref{LwF}, we can apply, for each $k\ge 1$, Theorem \ref{maxprin2}\ref{MP1} to the interval $I=[T_{k=1},T_k]$, and function $f_1(t):=F(t)$. It follows from inequality \eqref{max2} that        
\begin{align*}
        \max_{\overline{Q}_{[T_{k-1},T_k]}} u^+ 
        &\le \max_{\overline{Q}_{T_{k-1}}\cup\mathcal S_{(T_{k-1},T_k]}} u^+ 
        +\int_{T_{k-1}}^{T_k} F(\tau)\d \tau \\
        &\le \left (J_{k-1}+\sup_{\mathcal S_{(T_{k-1},T_k]}} u^+ \right)+\int_{T_{k-1}}^{T_k} F(\tau)\d \tau
        \le J_{k-1}+\Lambda_k
\end{align*}
which implies \eqref{wv1}.

Let $k\ge 1$. We apply Lemma \ref{lemG2} to the interval $I=[T_{k-1},T_k]$, with the same $M_1$, but, referring to the numbers appearing from \eqref{Tksum} to \eqref{sTek0}, 
\beqs 
M_2:=m_k,\  
x_*:=x_k,\ 
r:=r_k,\ R:=R_k,\ 
\beta=\beta_k,\ 
\eta_*:=\eta_k.
\eeqs 
We obtain from \eqref{Tgrow4} that
\beqs
J_k\le \eta_k J_{k-1}+\Lambda_k.
\eeqs 
Iterating this estimate for $k-1, k-2, \ldots, k_*+1$, we have
\begin{align*}
J_k&\le \eta_k(\eta_{k-1}J_{k-2}+\Lambda_{k-1})+\Lambda_k
=\eta_k\eta_{k-1}J_{k-2}+\eta_k\Lambda_{k-1}+\Lambda_k\\
&\le \eta_k\eta_{k-1}\eta_{k-2}J_{k-3}+\eta_k\eta_{k-1}\Lambda_{k-2}+\eta_k\Lambda_{k-1}+\Lambda_k\\
&\le \ldots \le \eta_k\eta_{k-1}...\eta_{k_*+1} J_{k_*} + \sum_{m=k_*+1}^k \left( \prod_{j=m+1}^k \eta_j \right)\Lambda_m,
\end{align*}
which implies \eqref{Jkstar}. Taking $k_*=0$ in \eqref{Jkstar} yields \eqref{Jkest0}.
\end{proof}

Thanks to \eqref{wv1} and \eqref{Jkest0}, we can estimate $u^+(x,t)$ for all $(x,t)\in \overline Q$ in terms of the values of $u$ on $\Gamma(Q)$. 
We now turn to the question of the behavior of $u(x,t)$ for large $t$.

\subsection{Zero drift}\label{NHZsec}
To estimate the series in \eqref{Jkest0} in some particular cases, we will use the following simple result.
\begin{lemma}[{ \cite[Lemma 4.6]{HI4} }]\label{dH}
    Given a number  $\eta\in(0,1)$ and a sequence $(\Lambda_k)_{k=0}^\infty$ of nonnegative numbers. For $k\ge 0$, let 
$ a_k= \sum_{j=0}^k \eta^{k-j}\Lambda_j$.
Then
\beqs%\label{alim}
\limsup_{k\to\infty} a_k\le \frac1{1-\eta} \limsup_{k\to\infty}\Lambda_k.
\eeqs
\end{lemma}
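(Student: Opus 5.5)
The plan is to prove Lemma~\ref{dH} directly from the definition of $\limsup$, by splitting the convolution sum $a_k=\sum_{j=0}^k\eta^{k-j}\Lambda_j$ into a finite head and a tail. Set $\Lambda^*=\limsup_{k\to\infty}\Lambda_k$. If $\Lambda^*=\infty$ there is nothing to prove, so assume $\Lambda^*<\infty$. Fix $\varep>0$ and choose an integer $N\ge 0$ such that $\Lambda_j\le \Lambda^*+\varep$ for all $j> N$. Since $\Lambda^*<\infty$, the sequence is then bounded, and we set $B=\max_{0\le j\le N}\Lambda_j<\infty$.

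For $k>N$, I will write
\begin{equation*}
a_k=\sum_{j=0}^{N}\eta^{k-j}\Lambda_j+\sum_{j=N+1}^{k}\eta^{k-j}\Lambda_j .
\end{equation*}
The head is bounded by $B\eta^{k-N}\sum_{i=0}^{N}\eta^{i}\le B\eta^{k-N}/(1-\eta)$, which tends to $0$ as $k\to\infty$ because $\eta\in(0,1)$. The tail is bounded by
\begin{equation*}
(\Lambda^*+\varep)\sum_{i=0}^{k-N-1}\eta^{i}\le \frac{\Lambda^*+\varep}{1-\eta},
\end{equation*}
using the nonnegativity of the terms and the geometric series.

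Taking $\limsup_{k\to\infty}$ then gives $\limsup_k a_k\le(\Lambda^*+\varep)/(1-\eta)$, and letting $\varep\to0$ yields the claim. The only point needing care is the case distinction $\Lambda^*=\infty$ versus $\Lambda^*<\infty$, which guarantees that the finitely many head terms stay bounded; otherwise the argument is a standard Toeplitz-type estimate, and I do not expect a genuine obstacle.
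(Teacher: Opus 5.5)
Your proof is correct and complete. The paper gives no argument of its own for this lemma; it only cites \cite[Lemma 4.6]{HI4}, and your head/tail split of $a_k$ is the standard proof of this Toeplitz-type bound.

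One small clarification. The head terms are bounded simply because there are finitely many of them, not because of the case distinction. What the assumption $\Lambda^*<\infty$ actually buys you is the finite tail bound $(\Lambda^*+\varep)\sum_{i\ge0}\eta^i$. Nonnegativity of the $\Lambda_j$ gives $\Lambda^*+\varep\ge0$, which is what lets you enlarge the partial geometric sum to the full one.
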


Theorem \ref{genlem} and Lemma \ref{dH} immediately result in the following explicit asymptotic estimates.

\begin{theorem}\label{NHthm0}
Under  Assumptions \ref{secondA}, considering the cases in Theorem \ref{STthm1}, let
\begin{itemize}
    \item $\Phi(t)=t-\delta_* t^{2p}$ and  $\bar\eta$ be defined by \eqref{the1} in the  case \eqref{dcond3}, or 
\item $\Phi(t)=\delta_* t$ and  $\bar\eta$ be defined by \eqref{the2} with $N=1$ in the  case \eqref{dcond4}.
\end{itemize}
\begin{enumerate}[label=\tnum]
    \item \label{dec1}    Suppose there is a function $f_1\in C([0,\infty),[0,\infty))$ such that 
    $L_0u(x,t)\le f_1(t)$ for all $(x,t)\in \widetilde Q$.  
Then 
\beq\label{lsup1}
\limsup_{t\to\infty} \left(\max_{\overline{Q}_t} u^+\right)
\le \frac{2-\bar\eta}{1-\bar\eta}\left[
\limsup_{t\to\infty}\left(\max_{\mathcal S_t} u^+\right)
+\limsup_{t\to\infty}\int_{\Phi(t)}^{t} f_1(\tau)\d\tau 
\right].
\eeq

\item \label{dec2}     Suppose there is a function $f\in C([0,\infty),[0,\infty))$ such that 
    $|L_0u(x,t)|\le f(t)$ for all $(x,t)\in \widetilde Q$. 
Then 
\beq\label{lasb1}
\limsup_{t\to\infty} \left(\max_{\overline{Q}_t} |u|\right)
\le \frac{2-\bar\eta}{1-\bar\eta}\left[
\limsup_{t\to\infty}\left(\max_{\mathcal S_t} |u|\right)
+\limsup_{t\to\infty}\int_{\Phi(t)}^{t} f(\tau)\d\tau 
\right].
\eeq
\end{enumerate}
\end{theorem}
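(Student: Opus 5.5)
Set up the iteration scheme of subsection \ref{scheme} exactly as in the proof of Theorem \ref{STthm1}: in case \eqref{dcond3} take $T_0=T$, $\tau_k=\delta_0k^\sigma$ and $r_k=d_k$ ($N=1$); in case \eqref{dcond4} take $T_0,\tau_k=\lambda^k$ as in \eqref{Ttau2} with $N=1$. Since $m_k=0$, the computations there give $\eta_k\le\bar\eta$ for all $k\ge1$, with $\bar\eta$ from \eqref{the1}, resp. \eqref{the2}. They also give $[T_{k-1},T_k]\subset[\Phi(T_k),T_k]$, by \eqref{tau2}, resp. \eqref{tau3}. Apply Theorem \ref{genlem} with $F:=f_1$. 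Then \eqref{Jkest0} gives $J_k\le\sum_{m=0}^k\bar\eta^{k-m}\Lambda_m$, and Lemma \ref{dH} yields $\limsup_k J_k\le(1-\bar\eta)^{-1}\limsup_k\Lambda_k$.

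Next I bound $\limsup_k\Lambda_k$. By definition \eqref{lamk}, $\Lambda_k\le\sup_{t\in(T_{k-1},T_k]}\max_{\mathcal S_t}u^+ + \int_{T_{k-1}}^{T_k}f_1\,\d\tau$. The second term is at most $\int_{\Phi(T_k)}^{T_k}f_1\,\d\tau$ because $f_1\ge0$. Since $T_{k-1}\to\infty$, the first term has $\limsup$ at most $\limsup_{t\to\infty}\max_{\mathcal S_t}u^+$. Here $\mathcal S_t$ is compact by \eqref{GJcomp}, and I interpret $\max_{\mathcal S_t}$ as $0$ when $\mathcal S_t=\emptyset$. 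Writing $\Lambda$ for the bracket on the right-hand side of \eqref{lsup1}, we get $\limsup_k\Lambda_k\le\Lambda$ and hence $\limsup_k J_k\le\Lambda/(1-\bar\eta)$.

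To pass from the grid times $T_k$ to arbitrary $t$, take $t\in[T_{k-1},T_k]$ and use \eqref{wv1}: $\max_{\overline Q_t}u^+\le J_{k-1}+\Lambda_k$. Taking $\limsup$ gives
\[
\limsup_{t\to\infty}\max_{\overline Q_t}u^+\le\Big(\frac1{1-\bar\eta}+1\Big)\Lambda=\frac{2-\bar\eta}{1-\bar\eta}\Lambda,
\]
which is exactly the constant in \eqref{lsup1}. The one point to check is that $T_k\to\infty$, so that every large $t$ lies in some $[T_{k-1},T_k]$; this holds for both choices of $\tau_k$. If either $\limsup$ on the right-hand side is infinite there is nothing to prove.

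For part \ref{dec2}, apply part \ref{dec1} to $u$ and to $-u$ with $f_1:=f$. Then use $u^\pm\le|u|$ from \eqref{ffrel} on the right-hand side and the identity \eqref{maxrel} on the left-hand side. The main obstacle is purely bookkeeping: making sure that the uniform bound $\eta_k\le\bar\eta$ from Theorem \ref{STthm1} holds for every $k\ge1$, and that the inclusion of the time windows in $[\Phi(T_k),T_k]$ holds, so that the forcing integral is dominated correctly. Both facts are already established in the proof of Theorem \ref{STthm1}.
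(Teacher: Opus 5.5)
Your proposal is correct and matches the paper's proof: reuse the Theorem \ref{STthm1} scheme to get $\eta_k\le\bar\eta$, combine \eqref{Jkest0} with Lemma \ref{dH}, bound $\limsup_k\Lambda_k$ using $[T_{k-1},T_k]\subset[\Phi(T_k),T_k]$, pass to arbitrary $t$ via \eqref{wv1} to get the constant $(2-\bar\eta)/(1-\bar\eta)$, and obtain part (ii) by applying part (i) to $\pm u$. The only cosmetic difference is that you handle the boundary term using $T_{k-1}\to\infty$ directly, whereas the paper routes it through $\sup_{\mathcal S_{(\Phi(t),t]}}u^+$; the two are equivalent.
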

\begin{proof}
We use the same $T_k$, $\beta_k$ and $\eta_k$ as in the proof of Theorem \ref{STthm1}.

\ref{dec1} 
We apply  Lemma \ref{genlem} taking into account the estimate of $\eta_j$ by \eqref{the1} in the case \eqref{dcond3}, and by \eqref{the2} in the case \eqref{dcond4}. It follows from inequality \eqref{Jkest0} that 
\beqs%\label{Jkest}
J_k\le \sum_{j=0}^{k} \bar\eta^{k-j} \Lambda_{j}.
\eeqs 
By Lemma \ref{dH},
\beq\label{Jk1}
\limsup_{k\to\infty} J_k\le \frac1{1-\bar\eta}\limsup_{k\to\infty}\Lambda_k.
\eeq
By \eqref{wv1} and \eqref{Jk1},
\beq\label{limQ1}
\limsup_{t\to\infty}\left(\max_{\overline{Q}_t} u^+\right)\le \limsup_{k\to\infty} J_{k-1}+\limsup_{k\to\infty} \Lambda_k
\le \frac{2-\bar\eta}{1-\bar\eta}\limsup_{k\to\infty} \Lambda_k.
\eeq
Note from \eqref{tau2} and \eqref{tau3} that
$(T_{k-1},T_k]\subset (\Phi(T_k),T_k]$. 
Together with the definition of $\Lambda_k$ in \eqref{lamk}, it implies
\beq\label{limd1}
\begin{aligned}
\limsup_{k\to\infty} \Lambda_k
&\le \limsup_{t\to\infty}\left(\sup_{ S_{(\Phi(t),t]}} u^+\right)+ \limsup_{t\to\infty}\int_{\Phi(t)}^{t} f(\tau)\d\tau\\
&= \limsup_{t\to\infty}\left(\max_{\mathcal S_t} u^+\right)+ \limsup_{t\to\infty}\int_{\Phi(t)}^{t} f(\tau)\d\tau.
\end{aligned}
\eeq
Combining \eqref{limQ1} with \eqref{limd1} yields inequality \eqref{lsup1}.

\ref{dec2} Denote by $\ell$ the right-hand side of \eqref{lasb1}.
Applying part (i) to $f_1:=f$, we obtain
\beq\label{get1}
\limsup_{t\to\infty} \left(\max_{\overline{Q}_t} u^+\right)
\le \frac{2-\bar\eta}{1-\bar\eta}\left[
\limsup_{t\to\infty}\left(\max_{\mathcal S_t} u^+\right)
+\limsup_{t\to\infty}\int_{\Phi(t)}^{t} f(\tau)\d\tau 
\right]\le \ell.
\eeq
Applying part (i) to $(-u)$ and $f_1:=f$, we obtain
\beq\label{get2}
\limsup_{t\to\infty} \left(\max_{\overline{Q}_t} u^-\right)
\le \frac{2-\bar\eta}{1-\bar\eta}\left[
\limsup_{t\to\infty}\left(\max_{\mathcal S_t} u^-\right)
\limsup_{t\to\infty}\int_{\Phi(t)}^{t} f(\tau)\d\tau 
\right]\le \ell.
\eeq
By using property \eqref{maxrel}, one has
 \beqs
\limsup_{t\to\infty} \left(\max_{\overline{Q}_t} |u|\right)
  \le \max\left \{
 \limsup_{t\to\infty} \left(\max_{\overline{Q}_t} u^+\right), 
 \limsup_{t\to\infty} \left(\max_{\overline{Q}_t} u^-\right)
 \right\}
 \le \limsup_{t\to\infty} \left(\max_{\overline{Q}_t} |u|\right).
 \eeqs
 Thus,
 \beq\label{absw}
\limsup_{t\to\infty} \left(\max_{\overline{Q}_t} |u|\right)
= \max\left \{
 \limsup_{t\to\infty} \left(\max_{\overline{Q}_t} u^+\right), 
 \limsup_{t\to\infty} \left(\max_{\overline{Q}_t} u^-\right)
 \right\}.
 \eeq
Combining this with the  estimates \eqref{get1} and \eqref{get2}, 
one obtains \eqref{lasb1}.
\end{proof}

\subsection{Bounded drifts}\label{NHBsec}
We focus on general bounded drifts now.
We recall that the solution can be estimated  for all time by the virtue of \eqref{wv1}.    
However, the following examples show that the solutions in non-cylindrical sets may have unexpected behaviors compared to the cylindrical case.

\begin{example}\label{ubeg}
Given $T\ge 0$, let $Q=\{(x,t)\in \R\times(T,\infty): |x|<\sqrt t\}$. 
Observe that $\gamma(Q)=\emptyset$ which implies $Q=\widetilde Q$ and $\partial Q\setminus \overline{Q}_T=\Gamma(Q)\setminus (\R\times\{T\})$.
Consider the equation 
\beq\label{ufb}
u_t-u_{xx}=f(x,t)\text{ in } Q, \text{ with }  u|_{\partial Q\setminus (\R\times\{T\})}=0.
\eeq
\begin{enumerate}[label=\rnum]
\item  Letting $f(x,t)=3$, we have the function $u(x,t)=t-x^2$ solves \eqref{ufb}.
In spite of the zero boundary condition and $f$ being bounded, the solution $u(x,t)$ can still go to infinity as $t\to\infty$.

\item  
Let $T=1$ and $f(x,t)=x^2/t^2+2/t$. Then $u(x,t)=1-x^2/t$ solves \eqref{ufb}.
In this case, $0\le f(x,t)\le 3/t\to 0$ as $t\to\infty$, but $u(x,t)$ is only bounded, for, $u(0,t)=1$.
\end{enumerate}
\end{example}

The following asymptotic estimates, as $t\to\infty$, are more involved than those in Theorem \ref{NHthm0}. Moreover, they require additional  monotonicity \eqref{mono} or \eqref{mono2} below.

\begin{theorem}\label{NHthm}
Under Assumptions \ref{secondA} and \ref{condB}, consider the three cases 1, 2, 3 in Theorem \ref{STthm3} with $T>0$, and define, for $t\ge T$,
\beqs
\Phi(t)=
\begin{cases}
    t-\delta_*& \text{ in Cases 1 and 2,}\\
t-\delta_* t^z&\text{ in Case 3.}
\end{cases}
\eeqs 
Recall that the number $\theta$ is defined by \eqref{theta}, and the number $\nu$ is defined by \eqref{nudef2} for Case 2, and by \eqref{nudef3} for Case 3.
\begin{enumerate}[label=\tnum]
\item   Suppose there are continuous functions $F_1:[0,\infty)\to [0,\infty)$ and $\Lambda_*:[T,\infty)\to[0,\infty)$  
such that 
    $Lu(x,t)\le F_1(t)$ for all $(x,t)\in \widetilde Q$, and 
\beq\label{Lamstar1}
\max_{\mathcal S_{[\Phi(t),t]}} u^+ +\int_{\Phi(t)}^{t}F_1(\tau)\d \tau\le \Lambda_*(t)
 \text{ for all $t\ge T$.}
\eeq
 Assume additionally the following.
\begin{itemize}
    \item In Case 2, either 
\beq \label{mono}
\text{ the function }
t\mapsto \exp(\nu t^{\theta})\Lambda_*(t)\text{ is monotone in $[T,\infty)$, or}
\eeq 
\beq \label{mono2}
\text{ the function } t\mapsto \Lambda_*(t)\text{ is decreasing in $[T,\infty)$.}
\eeq
    \item In Case 3, condition \eqref{mono2} is satisfied and the limit 
\beqs%\label{ell2}
\lim_{t\to\infty}\left[ t^{K/K_*} \exp(\mu_* t^{z\theta}) \Lambda_*(t)\right] \text{ exists, where }
\mu_*=\nu \left[\frac{3\cdot 2^\frac{z}{1-z}  \delta_*^{1-z}}{ (1-z)^{z}}\right]^\theta.
\eeqs    
\end{itemize}
Define 
\beq\label{ell1}
\ell=\begin{cases}
    {\displaystyle \limsup_{t\to\infty} \left [ t^{1-\theta} \Lambda_*(t)\right]}& \text{ in Cases 1 and 2,}\\
{\displaystyle \lim_{t\to\infty}\left[ t^{K/K_*} \exp(\mu_* t^{z\theta}) \Lambda_*(t)\right]}&\text{ in Case 3.}
\end{cases}.
\eeq
Then one has
\beq \label{limest}
\limsup_{t\to\infty} \left(\max_{\overline{Q}_t}u^+\right)\le C\ell,
\eeq
where $C>0$ is a constant independent of $u$.

\item Suppose there are continuous functions $F:[0,\infty)\to [0,\infty)$ and $\Lambda_*:[T,\infty)\to[0,\infty)$
such that 
    $|Lu(x,t)|\le F(t)$ for all $(x,t)\in \widetilde Q$ and 
\beq\label{Lamstar2}
\max_{\mathcal S_{[\Phi(t),t]}} |u|+\int_{\Phi(t)}^{t}F(\tau)\d \tau\le \Lambda_*(t)
 \text{ for all $t\ge T$.}
\eeq
With $\Lambda_*(t)$ in \eqref{Lamstar2}, continue as in part (i) from \eqref{mono} to \eqref{ell1}.
Then one has
\beqs%\label{limest2}
\limsup_{t\to\infty} \left(\max_{\overline{Q}_t}|u|\right)\le C\ell.
\eeqs
\end{enumerate}
\end{theorem}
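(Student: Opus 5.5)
The plan is to run the iteration of Theorem~\ref{genlem} on the same partitions $T_k$ (with the same $\tau_k$, $r_k$, $R_k$) used in the proof of Theorem~\ref{STthm3} for each case. Then we estimate the resulting convolution sum $\sum_m \bigl(\prod_{j=m+1}^k\eta_j\bigr)\Lambda_m$ using the product bounds already derived there.

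First, by construction $(T_{k-1},T_k]\subset[\Phi(T_k),T_k]$, via \eqref{Tinv0} in Cases 1--2 and \eqref{Tinv} in Case 3. Hence \eqref{Lamstar1} gives $\Lambda_k\le\Lambda_*(T_k)$ for $k\ge1$. By \eqref{wv1} it then suffices to bound $\limsup_k J_k$, plus $\limsup_k\Lambda_*(T_k)$. The latter is harmless, since $t^{1-\theta}\ge1$ in Cases 1, 2 and the weight in Case 3 is $\ge1$.

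\textbf{Case 1.} Here $\eta_j\le\bar\eta$ by \eqref{the3} and $\theta=1$. Apply Lemma~\ref{dH} to \eqref{Jkest0} to get $\limsup J_k\le(1-\bar\eta)^{-1}\limsup_k\Lambda_*(T_k)\le C\ell$.

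\textbf{Case 2.} From \eqref{eTk1}, $\eta_j\le 1-T_j^{-\varep}$, and the computation \eqref{lneta} gives
\[
\prod_{j=m+1}^k\eta_j\le \exp\bigl\{-\nu(T_{k+1}^\theta-T_{m+1}^\theta)\bigr\}.
\]
Set $g(t)=e^{\nu t^\theta}\Lambda_*(t)$. Then
\[
J_k\lesssim e^{-\nu T_{k+1}^\theta}\Bigl(J_{0}e^{\nu T_1^\theta}+\sum_{m=1}^k e^{\nu(T_{m+1}^\theta-T_m^\theta)}g(T_m)\Bigr),
\]
and the factor $e^{\nu(T_{m+1}^\theta-T_m^\theta)}$ is bounded because $\tau=\delta_0$ and $\theta<1$. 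Compare the sum with $\delta_0^{-1}\int_{T_0}^{T_{k+1}} g(t)\,dt$, using monotonicity \eqref{mono} (or \eqref{mono2}, which lets us replace $\Lambda_*(T_m)$ by $\Lambda_*$ on $[T_{m-1},T_m]$). This yields $J_k\lesssim o(1)+e^{-\nu T_{k+1}^\theta}\int^{T_{k+1}} e^{\nu t^\theta}\Lambda_*(t)\,dt$. If $\Lambda_*(t)\le(\ell+\epsilon)t^{\theta-1}$ for large $t$, then since $\frac{d}{dt}e^{\nu t^\theta}=\nu\theta t^{\theta-1}e^{\nu t^\theta}$, the integral is at most $(\ell+\epsilon)(\nu\theta)^{-1}e^{\nu T_{k+1}^\theta}+O(1)$. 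This gives $\limsup J_k\le C\ell$ with $C\sim(\nu\theta\delta_0)^{-1}$.

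\textbf{Case 3.} The same scheme applies with \eqref{eTk2}. Now $\tau_k=\delta_0k^\sigma$ grows and the product bound comes from \eqref{lneta3}: $\prod_{j=m+1}^k\eta_j\le \exp\{-\nu_0[(k+2)^{a}-(m+2)^{a}]\}$ with $a=1-\varep(\sigma+1)=\theta/(1-z)$. The increments $(m+2)^a-(m+1)^a\sim m^{a-1}$ are still bounded. For the sum, compare $\sum_m e^{\nu_0 (m+2)^a}\Lambda_*(T_m)$, using \eqref{mono2} for the integral comparison, with $\int e^{\nu_0 s^a}\Lambda_*(T(s))\,ds$, where $T(s)\approx\frac{\delta_0}{\sigma+1}s^{\sigma+1}$ by \eqref{Tkk0}. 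Then convert back to $t$.

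The factor $k^{a-1}$ from differentiating $e^{\nu_0k^a}$ turns into a power of $t$. Matching the exponents gives the weight $t^{K/K_*}$, since $1-a=\varep(\sigma+1)$ and $k\sim t^{1-z}$, so $k^{1-a}\sim t^{\varep}=t^{K/K_*}$. The comparison between $e^{\nu_0(k+2)^a}$ and $e^{\nu t^\theta}$ produces the $\exp(\mu_*t^{z\theta})$ correction, from the discrepancy $(T_{k+1}-T_0)^\theta$ versus $T_k^\theta$ over one step of length $\tau_k\sim t^z$. The constant $3\cdot2^{\sigma}\delta_*^{1-z}/(1-z)^z$ in $\mu_*$ should arise from $\tau_{k+1}\le 2^\sigma\tau_k$ together with the bounds on $\tau_k$ from \eqref{tau01}. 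The existence of the limit defining $\ell$ is used to pass $\Lambda_*(T_m)\le(\ell+\epsilon)T_m^{-K/K_*}e^{-\mu_*T_m^{z\theta}}$ inside the sum.

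\textbf{Main obstacle.} I expect the Case~3 bookkeeping to be the hardest step: getting the precise exponent $\mu_*$ and handling the non-uniform steps, including the indices $k<k_0$ where the bound \eqref{eTk2} is not yet available. Those early terms are absorbed in a finite prefactor multiplied by the decaying product $\prod_{j=k_0}^k\eta_j\to0$.

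Part (ii) follows by applying (i) to $\pm u$ and using \eqref{absw}.
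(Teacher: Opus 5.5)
Your proposal is correct. In Cases 1 and 2 it is the paper's argument: the same partitions, \eqref{wv1}, \eqref{Jkest0}, the product bound from \eqref{lneta}, the bounded ratio $e^{\nu(T_{m+1}^\theta-T_m^\theta)}\le e^{\nu\delta_0^\theta}$, and a sum--integral comparison via \eqref{mono} or \eqref{mono2}. The one difference is cosmetic. The paper quotes an ODE lemma for $y(t)=e^{-\nu t^\theta}\int_{T_0}^t e^{\nu\tau^\theta}\Lambda_*(\tau)\,\d\tau$, whereas you insert $\Lambda_*(t)\le(\ell+\epsilon)t^{\theta-1}$ and integrate $t^{\theta-1}e^{\nu t^\theta}$ exactly, which is the same estimate done by hand.

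In Case 3 your route genuinely differs, and your account of where $\mu_*$ comes from is mistaken, though harmlessly.

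\textbf{How the paper gets $\mu_*$.} It converts to the time variable first, \eqref{lneta5}. It then pushes $T_{m+2}^\theta$ back to $T_{m-1}^\theta$ using the subadditive bound that follows \eqref{Tm2}. That step replaces an increment which actually tends to zero by $\mu_1^\theta T_{m-1}^{z\theta}$, and it is the only source of $\exp(\mu_*t^{z\theta})$ in \eqref{Tme}. The paper then compares with $\int\exp(\nu\tau^\theta+\mu_*\tau^{z\theta})\tau^{-z}\Lambda_*(\tau)\,\d\tau$ and applies L'H\^opital, which is why it assumes the limit exists.

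\textbf{What your route gives.} Staying in the index variable, no such factor appears:
\begin{itemize}
\item $T_m\ge\frac{\delta_0}{\sigma+1}m^{\sigma+1}$ gives $T_m^{-K/K_*}\le Cm^{a-1}$, since $(\sigma+1)K/K_*=1-a$.
\item $\sum_{m\le k}e^{\nu_0(m+2)^a}m^{a-1}\le Ce^{\nu_0(k+2)^a}$.
\item Combining these with \eqref{Jkstar} gives $\limsup_k J_k\le C\limsup_{t\to\infty}t^{K/K_*}\Lambda_*(t)$.
\end{itemize}
Any comparison between $e^{\nu_0(k+2)^a}$ and $e^{\nu T_k^\theta}$ also costs only a bounded factor. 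By the mean value theorem,
\[
T_{k+1}^\theta-T_k^\theta\lesssim\tau_{k+1}T_k^{\theta-1}\lesssim T_k^{z+\theta-1}=T_k^{-K/K_*}\to0 ,
\]
so the ``one step of length $\tau_k\sim t^z$'' does not generate $\exp(\mu_*t^{z\theta})$.

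\textbf{Consequences.} Your bound is sharper than \eqref{limest}, which follows a fortiori because $\exp(\mu_*t^{z\theta})\ge1$. The ``main obstacle'' you name disappears once you simply discard that factor. Your route also needs only a limsup rather than an existing limit. Finally, suppose you insert the pointwise bound on $\Lambda_*$ before comparing sums with integrals. The comparison is then made on an explicit function whose consecutive terms have bounded ratios, so neither \eqref{mono} nor \eqref{mono2} is actually needed, in Case 2 or Case 3.
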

\begin{proof}
Note that we can always take $T>0$ sufficiently large.
We use the same $T_k$, $\beta_k$ and $\eta_k$ as in the proof of Theorem \ref{STthm3}.
Recall from \eqref{Tinv0} and \eqref{Tinv} that 
\beq \label{Tinv3}
[T_{k-1},T_k]\subset [\Phi(T_k),T_k]\text{ for all three cases.}
\eeq 

(i) Denote $\Lambda_m=\Lambda_*(T_m)$. We use the same notation as in the proof of Theorem \ref{STthm3}.
By \eqref{Tinv3}, \eqref{Lamstar1}  and \eqref{ell1}, we have
\beq\label{Lamk}
\limsup_{k\to\infty}\Lambda_k \le \limsup_{t\to\infty}\Lambda_*(t) \le \ell.
\eeq
Thanks to \eqref{wv1} and \eqref{Lamk}, we have
\beq \label{limu0}
\limsup_{t\to\infty} \left(\max_{\overline{Q}_t}u^+\right)\le  \limsup_{k\to\infty}J_k+\limsup_{k\to\infty}\Lambda_k
\le \limsup_{k\to\infty}J_k+\ell.
\eeq 
Below, we focus on estimating the limit superior of $J_k$ as $k\to\infty$.
First, we use \eqref{Jkest0}  to estimate $J_k$.

\medskip\noindent
\textbf{Case 1.} Recall from \eqref{the3} that $\eta_j\le \bar \eta$. 
Utilizing this in \eqref{Jkest0} gives 
\beqs
J_k\le     \sum_{m=0}^k \left[ \left(\prod_{j=m+1}^k \eta_j\right) \Lambda_m\right]
  \le \sum_{m=0}^k \left( \bar\eta^{k-m} \Lambda_m\right).
\eeqs 
Applying Lemma \ref{dH} to estimate the last sum, and then using \eqref{Lamk}, we obtain
\beq\label{limu1}
\limsup_{k\to\infty}J_k
\le \frac1{1-\bar\eta}\limsup_{k\to\infty}\Lambda_k
\le  \frac1{1-\bar\eta}\cdot\ell .
\eeq
Hence, we obtain estimate \eqref{limest} from  \eqref{limu0} and \eqref{limu1}.

\medskip\noindent
\textbf{Case 2.} 
Same as \eqref{lneta}--\eqref{nudef2}, we have
\beq\label{lneta2}
\ln \prod_{j=m+1}^k\eta_j
\le -\frac{1}{\delta_0}\int_{T_{m+1}}^{T_{k+1}} \tau^{-\varep}\d\tau
= \frac{1}{\delta_0 (1-\varep)} (-T_{k+1}^{1-\varep}+T_{m+1}^{1-\varep})
=\nu  (-T_{k+1}^\theta+T_{m+1}^\theta).
\eeq
Combining \eqref{Jkest0}  with \eqref{lneta2} gives
\begin{align}\notag
J_k&\le  \sum_{m=0}^k \left[\exp(\nu  T_{m+1}^\theta-\nu T_{k+1}^\theta) \Lambda_m\right]\\
     &=\exp(-\nu T_{k+1}^{\theta})\sum_{m=0}^k \left[\frac{\exp\left(\nu  T_{m+1}^{\theta}\right)}{\exp\left(\nu  T_m^{\theta}\right)}\exp\left(\nu  T_m^{\theta}\right) \Lambda_*(T_m)\right]\label{uTk} .
\end{align}
For $m\ge 0$, recall $T_{m+1}=T_0+\delta_0 (m+1)=T_m+\delta_0$.
With $\theta\in(0,1)$, we estimate $T_{m+1}^\theta\le T_m^\theta+\delta_0^\theta$. Therefore, the quotient in the last sum in \eqref{uTk} can be estimated by
\beq\label{eeTT}
\frac{\exp\left(\nu  T_{m+1}^{\theta}\right)}{\exp\left(\nu  T_m^{\theta}\right)}
\le \frac{\exp\left(\nu  (T_{m}^{\theta}+\delta_0^\theta)\right)}{\exp\left(\nu  T_m^{\theta}\right)}
= C_1\eqdef \exp\left(\nu \delta_0^\theta\right).
\eeq
Utilizing this in \eqref{uTk} yields
\beq\label{IJk}
J_k\le C_1I_k, \text{ where } I_k=\exp(-\nu T_{k+1}^{\theta})\sum_{m=0}^k \left[\exp\left(\nu  T_m^{\theta}\right) \Lambda_*(T_m)\right].
\eeq

To estimate $I_k$,  we consider the following three scenarios corresponding to \eqref{mono} and \eqref{mono2}.

\medskip\noindent
\emph{Case 2a. The function $\exp(\nu t^{\theta})\Lambda_*(t)$ is increasing.} 
 Then 
\begin{align}\notag
    I_k
    &\le \exp(-\nu T_{k+1}^{\theta}) \sum_{m=0}^k \frac1{\tau_{m+1}}\int_{T_m}^{T_{m+1}} \left[\exp\left(\nu  \tau^{\theta}\right) \Lambda_*(\tau)\right]\d\tau\\
    &=  \frac{1}{\delta_0}\exp(-\nu T_{k+1}^{\theta})\int_{T_0}^{T_{k+1}}  \exp(\nu \tau^{\theta})\Lambda_*(\tau)\d\tau
    =\frac1{\delta_0}y(T_{k+1}),\label{Ik2}
\end{align}
where
\beq\label{yeint}
y(t)=\exp(-\nu t^{\theta})
    \int_{T_0}^t  \exp(\nu \tau^{\theta})\Lambda_*(\tau)\d\tau.
\eeq

We investigate $y(t)$ as $t\to\infty$. Note that $y(t)\ge 0$, for $t\ge T_0$, is a solution of the equation
\beqs
y'(t)= -\nu \theta t^{\theta-1} y(t)+\Lambda_*(t)=-h(t)\varphi(y(t))+\Lambda_*(t),\text{ for $t\ge T_0$,}
\eeqs
where  $h(t)=\nu \theta t^{\theta-1}$ and $\varphi={\rm Id}$.
Clearly,  $\int_{T_0}^\infty h(\tau)\d\tau=\infty$. Then we have from \cite[Lemma A.1]{HIKS1} that
\beq\label{limy}
\begin{aligned}
\limsup_{t\to\infty}y(t)
&\le \limsup_{t\to\infty} \varphi^{-1}(\Lambda_*(t)/h(t))
=\limsup_{t\to\infty} \frac{\Lambda_*(t)}{h(t)}\\
&=\frac1{\nu \theta}\limsup_{t\to\infty} (t^{1-\theta}\Lambda_*(t))=\frac{\ell}{\nu\theta}.    
\end{aligned}
\eeq
Thus, \eqref{IJk}, \eqref{Ik2} and \eqref{limy} imply
\beq\label{limJ2}
\limsup_{k\to\infty}I_k
\le \frac1{\delta_0}\limsup_{t\to\infty}y(t)
\le \frac{\ell}{\delta_0 \nu\theta}.
\eeq

\medskip\noindent
\emph{Case 2b. The function $\exp(\nu t^{\theta})\Lambda_*(t)$ is decreasing.} We rewrite
\beq\label{IS1}
    I_k=\exp(-\nu T_{k+1}^{\theta})\left\{\exp\left(\nu  T_0^{\theta}\right) \Lambda_*(T_0)+ \sum_{m=1}^k \left[\exp\left(\nu  T_m^{\theta}\right) \Lambda_*(T_m)\right]\right\}.
\eeq
We simply bound the first term on the right-hand side of \eqref{IS1} by
\beq \label{esim}
\exp(-\nu T_{k+1}^{\theta})\le \exp(-\nu T_{k}^{\theta}),
\eeq 
and, for the last summation in $m$,
\beqs
\sum_{m=1}^k \left[\exp\left(\nu  T_m^{\theta}\right) \Lambda_*(T_m)\right]
\le \sum_{m=1}^k \frac1{\tau_m}\int_{T_{m-1}}^{T_m} \exp\left(\nu  (\tau^{\theta}\right) \Lambda_*(\tau)\d\tau
=\frac1{\delta_0}\int_{T_0}^{T_{k}} \exp\left(\nu  \tau^{\theta}\right) \Lambda_*(\tau)\d\tau.
\eeqs 
These estimates result in 
\beqs
    I_k\le C'_1\exp(-\nu T_{k}^{\theta})+ \frac1{\delta_0}\exp(-\nu T_{k}^{\theta})\int_{T_0}^{T_{k}} \exp\left(\nu  \tau^{\theta}\right) \Lambda_*(\tau)\d\tau
    =C'_1\exp(-\nu T_{k}^{\theta})+ \frac1{\delta_0}y(T_k),
\eeqs
where $C'_1=\exp\left(\nu  T_0^{\theta}\right) \Lambda_*(T_0)$ and $y(t)$ is defined by \eqref{yeint}.
Same as Case 2a,  we use the limit \eqref{limy} to have
\beq\label{limJ3}
\limsup_{k\to\infty}I_k
\le 0+\frac{1}{\delta_0}\limsup_{t\to\infty}y(t)\le \frac{\ell}{\delta_0 \nu\theta}.
\eeq

\medskip\noindent
\emph{Case 2c. The function $\Lambda_*(t)$ is decreasing.}  We use the formula \eqref{IS1} and inequality \eqref{esim} again. We estimate the term  $\exp(\nu T_m^\theta)$ in \eqref{IS1}, with $m\ge 1$,   by applying inequality \eqref{eeTT} to $m:=m-1$ to have $\exp(\nu T_m^\theta)\le C_1 \exp(\nu T_{m-1}^\theta)$.
We then obtain 
\begin{align*}
    I_k&\le\exp(-\nu T_{k}^{\theta})\left\{C'_1+ C_1\sum_{m=1}^k \left[\exp\left(\nu  T_{m-1}^{\theta}\right) \Lambda_*(T_m)\right]\right\}.
\end{align*}
Using the facts $e^{\nu t^\theta}$ is increasing and $\Lambda_*(t)$ is decreasing, we estimate the last summation by 
\beqs
\sum_{m=1}^k \left[\exp\left(\nu  T_{m-1}^{\theta}\right) \Lambda_*(T_m)\right]
\le \sum_{m=1}^k\frac1{\tau_m}\int_{T_{m-1}}^{T_m} \exp\left(\nu  \tau^{\theta}\right) \Lambda_*(\tau)\d\tau
=\frac1{\delta_0}\int_{T_0}^{T_{k}} \exp\left(\nu  \tau^{\theta}\right) \Lambda_*(\tau)\d\tau.
\eeqs 
Therefore,
\beqs 
    I_k
\le  C'_1\exp(-\nu T_{k}^{\theta})+ \frac{C_1}{\delta_0}\exp(-\nu T_{k}^{\theta})\int_{T_0}^{T_{k}} \exp\left(\nu  \tau^{\theta}\right) \Lambda_*(\tau)\d\tau
=C'_1\exp(-\nu T_{k}^{\theta})+ \frac{C_1}{\delta_0}y(T_k).
\eeqs 
Using  the limit \eqref{limy} again yields
\beq\label{limJ4}
\limsup_{k\to\infty}I_k
\le 0+\frac{C_1}{\delta_0}\limsup_{t\to\infty}y(t)\le \frac{C_1 \ell}{\delta_0 \nu\theta}.
\eeq

\medskip
Thus, in all Cases 2a, 2b and 2c, we have from \eqref{limJ2}, \eqref{limJ3} and \eqref{limJ4} that
\beq\label{Iclaim}
\limsup_{k\to\infty}I_k
\le C_*\ell,
\eeq
where $C_*$ is positive constant independent of $u$, $F_1(t)$, $F(t)$, $\Lambda_*(t)$. 
Now, we can combine \eqref{limu0} with \eqref{IJk} and \eqref{Iclaim}  to have
\beqs %\label{limu0}
\limsup_{t\to\infty} \left(\max_{\overline{Q}_t}u^+\right)
\le C_1\limsup_{k\to\infty}I_k+\ell\le C_1 C_*\ell +\ell,
\eeqs 
which proves \eqref{limest}.

\medskip\noindent
\textbf{Case 3.} 
We select $k_0$ sufficiently large that satisfies not only \eqref{rdp0} and \eqref{rdp2} as in the proof of Case 3 of Theorem \ref{STthm3},  but also
\beq\label{ko2}
T_{k_0}\ge 2 T_0\text{ and } (T_{k_0}/2)^z(1-2^{-z})\ge \left( \frac{\delta_0}{\sigma+1}\right)^z.
\eeq
Let $k>k_0$. Applying \eqref{Jkstar} with $k_*=k_0\ge 1$ gives
\beq \label{JP1}
J_k\le \left(\prod_{j=k_0+1}^k \eta_j\right) J_{k_0} +\sum_{m=k_0+1}^k \left[ \left(\prod_{j=m+1}^k \eta_j\right) \Lambda_m\right] \text{ for all $k>k_0\ge 0$.}
\eeq 
By \eqref{exkdec6},
\beq\label{limJ0} 
\prod_{j=k_0}^k \eta_j\to 0\text{ as }k\to\infty.
\eeq 
Recall from \eqref{eTk2} that $\varep=K/K_*$.
We have the same  inequality as \eqref{lneta3} but for $k_0:=m+1$, hence, with $\nu_0$ coming from \eqref{nuc0}, 
\beq\label{lneta4}
\ln \prod_{j=m+1}^k\eta_j\le \nu_0\left[-(k+2)^{1-\varep(\sigma+1)}+(m+2)^{1-\varep(\sigma+1)}\right] .
\eeq 
On the one hand, we find a lower bound of $k+2$ by \eqref{Tkk5}.
On another hand, we find an upper bound of $m+2$ by applying the second inequality in \eqref{Tkk0} to $k:=m+2$ to have
\beq\label{Tkk6}
m+2\le \left[\frac{\sigma+1}{\delta_0 }(T_{m+2}-T_0)\right]^\frac1{\sigma+1}
\le \left[\frac{\sigma+1}{\delta_0 }T_{m+2}\right]^\frac1{\sigma+1}.
\eeq
It then follows from \eqref{lneta4}, \eqref{Tkk5}, \eqref{Tkk6} as well as \eqref{esz}, that 
\beqs
\ln \prod_{j=m+1}^k\eta_j\le \nu \left[-(T_{k+1}-T_0)^\theta +T_{m+2}^\theta\right] ,
\eeqs 
where  $\nu$ is defined in \eqref{nuc0}.
 With  $\theta\in(0,1)$, we have the inequality $(T_{k+1}-T_0)^\theta\ge T_{k+1}^\theta-T_0^\theta$.
Consequently,
\beq\label{lneta5}
\prod_{j=m+1}^k\eta_j
\le \exp\left\{ \nu (-T_{k+1}^\theta+T_0^\theta +T_{m+2}^\theta)\right\}
= C_2\exp \left[-\nu T_{k+1}^\theta+\nu T_{m+2}^\theta\right] ,
\eeq 
 where $C_2=e^{\nu T_0^\theta}$.
Combining \eqref{JP1}, \eqref{limJ0} and \eqref{lneta5}, we obtain 
\begin{align}\notag
\limsup_{k\to\infty} J_k
&\le 0+C_2 \limsup_{k\to\infty} \sum_{m=k_0+1}^k \left[\exp( -\nu T_{k+1}^{\theta}+\nu T_{m+2}^{\theta}) \Lambda_m\right]\\
\label{limJ1}
     &=C_2 \limsup_{k\to\infty} \left\{\exp(-\nu T_{k+1}^{\theta})S_k\right\},
\end{align}
 where 
\beq \label{Sksum}
S_k=\sum_{m=k_0+1}^k \left[\exp\left( \nu T_{m+2}^{\theta}\right) \Lambda_*(T_m)\right].
\eeq 

Let $m$ be an index number of the sum in \eqref{Sksum}.
We rewrite and estimate $T_{m+2}$ in terms of $T_{m-1}$ by 
\begin{align*}
T_{m+2}
&=T_{m-1}+\delta_0[ m^\sigma+(m+1)^\sigma+(m+2)^\sigma]
\le T_{m-1}+3\delta_0(m+2)^\sigma.
\end{align*}
To estimate $m+2$ in terms of $T_{m-1}$, we use the second inequality in \eqref{Tkk0} with $k:=m-1 $ and drop $T_0$ to have
\beqs
T_{m+2}
\le T_{m-1}+3\delta_0 \left[\left(\frac{\sigma+1}{\delta_0 }T_{m-1}\right)^\frac1{\sigma+1}+3\right]^\sigma.
\eeqs
Using the inequality $(x+y)^\sigma\le 2^\sigma(x^\sigma+y^\sigma)$ for $x,y>0$, we continue  estimating
\beq\label{Tm2}
T_{m+2}
\le T_{m-1}+3\delta_0 2^\sigma  \left[\left(\frac{\sigma+1}{\delta_0 }T_{m-1}\right)^\frac\sigma{\sigma+1}+3^\sigma\right]
=T_{m-1}+\mu_1T_{m-1}^z+\mu_2.
\eeq
where $\mu_1=3\delta_0 2^\sigma \left(\frac{\sigma+1}{\delta_0 }\right)^z$ and $\mu_2=3\delta_0 6^\sigma$.
With $\theta\in(0,1)$, it follows from \eqref{Tm2} that
\beqs 
T_{m+2}^{\theta}\le ( T_{m-1}+\mu_1 T_{m-1}^z+\mu_2)^\theta
\le  T_{m-1}^\theta+\mu_1^\theta  T_{m-1}^{z\theta}+\mu_2^\theta.
\eeqs 
Noticing also that $\nu \mu_1^\theta=\mu_*$, we consequently have
\beq\label{Tme}
\exp(\nu T_{m+2}^\theta)
\le e^{\nu \mu_2^\theta} \exp(\nu T_{m-1}^\theta+\nu\mu_1^\theta  T_{m-1}^{z\theta})
= C_3 \exp(\nu T_{m-1}^\theta+\mu_*  T_{m-1}^{z\theta}),
\eeq
where $C_3=e^{\nu \mu_2^\theta}$.
Using inequality \eqref{Tme} to estimate $S_k$, and then combining it with the facts $\exp(\nu t^\theta+\mu_* t^{z\theta})$ is increasing and $\Lambda_*(t)$ is decreasing, we infer
\beq\label{sel}
\begin{aligned}
    S_k&\le C_3\sum_{m=k_0+1}^k \exp(\nu T_{m-1}^\theta+\mu_*  T_{m-1}^{z\theta})\Lambda_*(T_m)\\
    &\le C_3\sum_{m=k_0+1}^k \frac{1}{\tau_m}\int_{T_{m-1}}^{T_m} \exp(\nu \tau^\theta+\mu_* \tau^{z\theta}) \Lambda_*(\tau)\d\tau.
\end{aligned}
\eeq 
Now, applying  the first inequality of \eqref{Tkk0} to $k:=m$, and also applying the inequality $|x-y|^\sigma\ge 2^{-\sigma}x^\sigma -y^\sigma$ for $x,y>0$, we obtain
\begin{align}
\tau_m
=\delta_0 m^\sigma 
&\ge \delta_0  \left\{\left[\frac{\sigma+1}{\delta_0}(T_m-T_0)\right]^\frac1{\sigma+1} -1\right\}^\sigma
\ge \frac{\delta_0}{2^\sigma}  \left\{\left[\frac{\sigma+1}{\delta_0}(T_m-T_0)\right]^\frac\sigma{\sigma+1} -1\right\} \notag \\
&= \frac{\delta_0}{2^\sigma}\left(\frac{\sigma+1}{\delta_0}\right)^z  \left\{(T_m-T_0)^z -\left(\frac{\delta_0}{\sigma+1}\right)^z\right\}.
\label{taum}
\end{align}
Thanks to \eqref{ko2} and the fact $T_m>T_{k_0}$, we have
\beq\label{Tm3}
(T_m-T_0)^z -\left(\frac{\delta_0}{\sigma+1}\right)^z
\ge (T_m/2)^z -\left(\frac{\delta_0}{\sigma+1}\right)^z
\ge (T_m/4)^z.
\eeq
For  $\tau\in(T_{m-1},T_m)$, we have from \eqref{taum} and \eqref{Tm3} that 
$$\tau_m\ge C_4 T_m^z\ge C_4\tau^z,\text{ where }
C_4=\frac{\delta_0^{1-z}(\sigma+1)^z}{2^{\sigma+2z}}.$$
Combining this with \eqref{sel} gives
\beq
    S_k
    \le C_3\sum_{m=k_0+1}^k \int_{T_{m-1}}^{T_m} \frac{\exp(\nu \tau^\theta+\mu_* \tau^{z\theta})}{C_4\tau^z} \Lambda_*(\tau)\d\tau
    = \frac{C_3}{C_4}\int_{T_{k_0}}^{T_k} \frac{\exp(\nu \tau^\theta+\mu_* \tau^{z\theta})}{\tau^z} \Lambda_*(\tau)\d\tau.\label{sel2}
\eeq

We obtain from \eqref{limJ1} and \eqref{sel2} that
\begin{align}\notag
\limsup_{k\to\infty}J_k
&\le \frac{C_2 C_3}{C_4}  \limsup_{k\to\infty} \left\{\exp(-\nu T_k^{\theta}) \int_{T_{k_0}}^{T_k} \frac{\exp(\nu \tau^\theta+\mu_* \tau^{z\theta})}{\tau^z} \Lambda_*(\tau)\d\tau\right\}\\
&\le \frac{C_2 C_3}{C_4}  \limsup_{t\to\infty} \left\{\exp(-\nu t^{\theta}) \int_{T_{k_0}}^{t} \frac{\exp(\nu \tau^\theta+\mu_* \tau^{z\theta})}{\tau^z} \Lambda_*(\tau)\d\tau\right\}.\label{limJ5}
\end{align}

\medskip\noindent\emph{Case 3a. The function $t\mapsto \int_{T_{k_0}}^{t} \exp(\nu \tau^\theta+\mu_* \tau^{z\theta})\tau^{-z} \Lambda_*(\tau)\d\tau$ is bounded.} Then clearly, the last limit in \eqref{limJ5} is zero.

\medskip\noindent\emph{Case 3b. The function $t\mapsto \int_{T_{k_0}}^{t} \exp(\nu \tau^\theta+\mu_* \tau^{z\theta})\tau^{-z} \Lambda_*(\tau)\d\tau$ is unbounded.} Using  L'Hospital Rule and noticing also that $1-z-\theta=K/K_*$, we obtain
\begin{align*}
&\lim_{t\to\infty}\left\{\exp(-\nu t^{\theta}) \int_{T_{k_0}}^{t} \frac{\exp(\nu \tau^\theta+\mu_* \tau^{z\theta})}{\tau^z} \Lambda_*(\tau)\d\tau\right\}
=\lim_{t\to\infty}\frac{ \exp(\nu t^\theta+\mu_* t^{z\theta}) \Lambda_*(t)}{t^z\cdot \exp(\nu t^{\theta})\nu\theta t^{\theta-1}}\\
&=\frac{1}{\nu\theta}\lim_{t\to\infty}\left\{ t^{K/K_*} \exp(\mu_* t^{z\theta}) \Lambda_*(t)\right\}=\frac{\ell}{\nu\theta}.
\end{align*}

\medskip\noindent Therefore, in both Cases 3a and 3b above, we have
\beq \label{limJ6}
\limsup_{k\to\infty}J_k\le \frac{C_2 C_3}{C_4\nu\theta}\ell .
\eeq 
Then the inequality \eqref{limest} follows from \eqref{limu0} and \eqref{limJ6}.

\medskip\noindent
(ii) We apply part (i) to $u$ and $(-u)$, and then use relation \eqref{absw}.
\end{proof}

%\newpage
\appendix
\section{Proof of Lemma \ref{moreE}}\label{apA}

We use the notation in Assumption \ref{GGamcond}.

\medskip\noindent\emph{Part A.} If $t$ is an interior point of $I$, then, by Assumption \ref{GGamcond}\ref{asii} and by taking a sufficiently small number $h>0$, we have the set $F\eqdef Q_{(t-h,t+h)}$ is bounded. Thus, together with the second property in \eqref{qe1}, $\overline{Q}_t=\overline{F}_t\subset \overline{F}$ is bounded.

Consider $t=\min I$. Then $t\in\partial I$. 
Let $(x,t)\in \overline Q$. 
Because $\mathbb P_2$ is an open mapping, $I=\mathbb P_2(\overline{Q})$ and $t=\mathbb P_2 (x,t)$, then the point $(x,t)$ cannot be an interior point of $\overline Q$. Thus, we must have $(x,t)\in \partial Q$.
Then there is a sequence $(x_k,\tau_k)\in Q$ converging to $(x,t)$ as $k\to\infty$. Since $Q$ is open and, again,  $\mathbb P_2$ is an open mapping, each $\tau_k$ is an interior point of $I$, thus, we must have $\tau_k>t$. Fix any $t_0\in I$ and $t_0>t$. For sufficiently large $k$, we have $\tau_k\in(t,t_0)$ which implies $(x_k,\tau_k)\in F\eqdef Q_{(t,t_0)}$. Hence $(x,t)\in  \overline F$. 
Since the set $\overline F$ is independent of $x$, and is bounded thanks to Assumption \ref{GGamcond}\ref{asii}, we have $\overline{Q}_t$ is bounded.

Similarly, if $t=\max I$ then $\overline{Q}_t$ is bounded. 
In summary, we have proved that $\overline{Q}_t$ is bounded for any $t\in I$.

\medskip\noindent\emph{Part B.} 
With $E=Q_{(t_1,t_2)}$ we have, thanks to \eqref{qe1}, $\overline{Q}_{(t_1,t_2)}=\overline{E}_{(t_1,t_2)}\subset \overline{E}$ which is  bounded thanks to Assumption \ref{GGamcond}\ref{asii}.
Together with $\overline{Q}_{t_1}$ and $\overline{Q}_{t_2}$ being bounded thanks to part A above, we then obtain $\overline{Q}_{[t_1,t_2]}=\overline{Q}_{t_1}\cup \overline{Q}_{(t_1,t_2)}\cup \overline{Q}_{t_2}$ is bounded.
    
\section{Proof of Proposition \ref{gset}}\label{apB}

We prove for the case $J=(t_*,\infty)$. The case $J=\R$ is, in fact, simpler and can be proved in a similar way. Without loss of the generality, we assume also that $t_*=0$, that is, $J=(0,\infty)$. 
 
(a) Thanks to \eqref{zeroc}, the function $R$ is not identically zero in $(0,\infty)$. Hence, $Q$ is not empty. Set $F(x,t)=|x-X(t)|-R(t)$. Then $F$ is continuous in $\R^n\times [0,\infty)$.  We clearly have $$Q=\left(F\Big|_{\R^n\times (0,\infty)}\right)^{-1}((-\infty,0))$$ 
 which implies that $Q$ is an open set of $\R^n\times(0,\infty)$. Because the last set is open in $\R^{n+1}$, so is $Q$. Therefore, $Q$ satisfies Assumption \ref{GGamcond}\ref{as0}.

(b) Observe that 
\beq \label{Xtt}
(X(t),t)\in Q\text{ provided $t>0$ and $R(t)>0$.}
\eeq

Let $t\ge 0$.  If $R(t)=0$, we use the fact $t$ is isolated in $R^{-1}(\{0\})$.
If $R(t)>0$, we use the fact (based on the continuity of $R$ at $t$)
\beq\label{Rlimd}
R(t+)\ge R_-(t)=R(t)>0.
\eeq 
They result in 
\beq \label{Rpos}
\text{$R(\tau)>0$ for any $\tau>t$ near $t$. }
\eeq 
For  any number  $\tau$ in \eqref{Rpos}, we have $\tau>0$, and, thanks to \eqref{Xtt}, the point $(X(\tau),\tau)$ belongs to $Q$. By taking $\tau\searrow t$ and using the convergence $(X(\tau),\tau)\to (X(t),t)$, we obtain $(X(t),t)\in\overline Q$. We have proved
\beq\label{Xtall}
(X(t),t)\in \overline Q\text{ for all }t\ge 0.
\eeq
Therefore,  $\mathbb P_2(\overline{Q})=[0,\infty)$ and Assumption \ref{GGamcond}\ref{asi} is satisfied.

(c) By the continuity of $X$ and $R$ in $[0,\infty)$, the boundedness in Assumption \ref{GGamcond} \ref{asii} is satisfied.

(d) We claim that
\beq\label{QbZ}
\overline{Q}=Z\eqdef \{(x,t)\in \R^n\times[0,\infty): |x-X(t)|\le R(t)\},
\eeq

\emph{Step (d1).} To prove \eqref{QbZ}, we first observe, by the continuity of $X$ and $R$, that $\overline{Q}\subset Z$.

\emph{Step (d2).}  Now, consider any $(x,t)\in Z$. We examine two cases.

\noindent\emph{Case  $x=X(t)$.} By \eqref{Xtall}, we have $(x,t)=(X(t),t)\in \overline{Q}$.

\noindent\emph{Case $x\ne X(t)$.} Set $h=|x-X(t)|>0$. Then $0<h\le R(t)$.
Let $\varep$ be an arbitrary number in $(0,h/2)$ . There is $\delta\in(0,\varep)$ such that, for all $\tau\in(t,t+\delta)$, 
\beq\label{XXRe}
|X(t)-X(\tau)|<\varep\text{ and } R(t)-\varep<R(\tau).
\eeq
Choose $y_\varep\in\R^n$ in the line segment $[X(t),x]$ such that $|y_\varep-x|=2\varep$. Then
\beqs
|x-X(t)|=|x-y_\varep|+|y_\varep-X(t)|=2\varep+|y_\varep-X(t)|.
\eeqs 
Then, for any $\tau\in(t,t+\delta)$,  
\beqs
|y_\varep-X(\tau)|\le |y_\varep-X(t)|+|X(t)-X(\tau)|
=|x-X(t)|-2\varep +\varep\le R(t)-\varep<R(\tau).
\eeqs
Pick any $\tau_\varep\in(t,t+\delta)$, then $(y_\varep,\tau_\varep)\in Q$. Recalling $\delta<\varep$ and letting $\varep\to 0$, we have 
$(y_\varep,\tau_\varep)\to (x,t)$. Hence, $(x,t)\in\overline{Q}$.

The above two cases yield  $Z\subset \overline{Q}$. 
Therefore, $\overline{Q}=Z$, that is, the claim \eqref{QbZ} is true.

As a consequence of \eqref{QbZ}, we have, for $t\ge 0$,
\beqs \label{Qbart}
\overline{Q}_t=\{(x,t):x\in\R^n,|x-X(t)|\le R(t)\}.
\eeqs 

(e) From \eqref{QbZ}, we have 
\beq\label{Qpbt}
\partial Q=\overline{Q}\setminus Q=\overline{Q}_0 \cup \{(x,t)\in \R^n\times(0,\infty):|x-X(t)|=R(t)\}.
\eeq
Let $(x,t)$ be any point in $\partial Q$. 

\medskip\noindent\emph{Case $t=0$.} Then clearly $(x,t)\not\in \gamma(Q)$, thus $(x,t)\in \Gamma(Q)$.

\medskip\noindent\emph{Case $t>0$.} Then $|x-X(t)|=R(t)$.  Let $h$ be any positive number.
By the continuity of $X$ and $R$, there is $\delta\in(0,t)\cap (0,h)$ such that 
\beq\label{RX1}
R(\tau)<R(t)+h/4\text{ and } |X(\tau)-X(t)|<h/4 \text{ for all } \tau\in(t-\delta,t).
\eeq
Let $y\in \partial B_{h/2}(x)$ such that 
\beq \label{yX}
|y-X(t)|=|y-x|+|x-X(t)|,\text{ hence }|y-X(t)|= h/2+R(t).
\eeq 
(If $R(t)=0$, then $x=X(t)$, take any $y\in\partial B_{h/2}(x)$. If $R(t)>0$, take $y\in\R^n$ on the ray going from $X(t)$ to $x$ but outside of the line segment $[X(t),x]$.) 
For any $\tau\in (t-\delta,t)$, we have, by the triangle inequality, \eqref{yX}  and \eqref{RX1}, that
\beq \label{yXtau}
|y-X(\tau)|\ge |y-X(t)|-|X(t)-X(\tau)|\ge h/2+R(t)-h/4=R(t)+h/4>R(\tau).
\eeq 
Pick any $\tau\in(t-\delta,t)$. Then $(y,\tau)\in  \mathcal C_{x,h}^{(t-h,t)}$, and, together with \eqref{yXtau}, it implies
\beq \label{ytauC}
(y,\tau)\in \mathcal C_{x,h}^{(t-h,t)}\setminus Q.
\eeq 
Thus the first condition in \eqref{CCpt} fails, which implies $(x,t)\not\in \gamma(Q)$, and hence,  $(x,t)\in \Gamma(Q)$.

\medskip From the above two cases of $t$, we have 
\beq \label{pbequal}
\partial Q=\Gamma(Q).
\eeq
As a consequence of \eqref{pbequal}, one has  
\beq \label{gQem}
\gamma(Q)=\emptyset\text{ and } \widetilde Q=Q.
\eeq 

  (f) Let $E=Q_{(t_1,t_2)}$ with $t_2>t_1$. Without loss of the generality, assume $t_1\ge 0$. 
By \eqref{gEint} and \eqref{gQem}, 
\beq\label{gEint2} 
\gamma(E)_{(t_1,t_2)}=\gamma(Q)_{(t_1,t_2)}=\emptyset.
\eeq

Thanks to \eqref{pbequal},
\beq\label{gecup}
\gamma(E)_{t_2}\subset \overline{Q}_{t_2}=(Q\cup \Gamma(Q))_{t_2}=Q_{t_2}\cup \Gamma(Q)_{t_2}.
\eeq
Let $(x,t_2)\in \Gamma(Q)$. We follow the arguments in part (e) for $t=t_2$ after \eqref{Qpbt} up to \eqref{ytauC}.
In fact, taking $t=t_2$ and $h$ sufficiently  small in \eqref{ytauC}, we obtain 
\beqs 
(y,\tau)\in \mathcal C_{x,h}^{(t_2-h,t_2)}\setminus E.
\eeqs 
Hence any point $(x,t_2)$ in $\Gamma(Q)_{t_2}$ cannot be in $\gamma(E)$.
Utilizing this fact in \eqref{gecup} yields 
\beq\label{Qctn} \gamma(E)_{t_2}\subset Q_{t_2},
\eeq 
which implies $(t_1,t_2)\in\mathcal G(Q)$.
Thus, the  condition \ref{asv} in Assumption \ref{GGamcond} is satisfied. Together with Parts (a)--(c) above, we have that Assumption \ref{GGamcond} is satisfied.

(g) Finally, the last statement $\partial Q=\Gamma (Q)$ in Proposition \ref{gset} was already proved in \eqref{pbequal}. 

\begin{remark}
In fact, we have more information about $E$ than \eqref{Qctn}.
Indeed, since $Q$ is open and $t_2>0$,  we have $(x,t_2)\in \gamma(E)$ for any $(x,t_2)\in Q$. Thus,
\beq \label{Qincl}
Q_{t_2}\subset  \gamma(E)_{t_2}.
\eeq 
From \eqref{Qctn} and \eqref{Qincl}, one has $\gamma(E)_{t_2}=Q_{t_2}$.
Combining this fact with \eqref{gEQ} and  \eqref{gEint2} gives
$\gamma(E)=Q_{t_2}$.
\end{remark}

\section{Proof of Proposition \ref{gset1}}\label{apC}
Same as in the proof of Proposition \ref{gset}, it suffices to consider $J=(0,\infty)$.
Note from Assumption \ref{Rco}\ref{R2} that $R$ is continuous at $0$.

(a) Since $R$ is not identically zero, the set $Q$ is not empty.
Let $(x,t)\in Q$. Set $h=R(t)-|x-X(t)|$. Then $h>0$.
By the condition \eqref{Rmc}, the definition of $R_-$ in \eqref{RRpm}, and the continuity of $X$, there is $\delta\in(0,t)$ such that
\beq\label{RX2}
R(\tau)>R(t)-h/3\text{ and } |X(\tau)-X(t)|<h/3 \text{ for all } \tau\in(t-\delta,t+t).
\eeq
Let $(y,\tau)\in B_{h/3}(x)\times (t-\delta,t+\delta)$. 
Then we have, from the triangle inequality and \eqref{RX2}, 
\beqs 
|y-X(\tau)|\le |y-x|+|x-X(t)|+|X(t)-X(\tau)|< h/3 +(R(t)-h)+h/3<R(\tau).
\eeqs 
Hence, $(y,\tau)\in Q$. Thus, $\mathcal C_{x,h/3}^{(t-\delta,t+\delta)}\subset Q$, which implies $Q$ is open. 
Therefore, $Q$ satisfies Assumption \ref{GGamcond}\ref{as0}.

(b) We proceed in the same way as Step (b) of the proof of Proposition \ref{gset}. Recall that $R$ is continuous at $t=0$. For $t>0$, by using property \eqref{Rmc}, we still have \eqref{Rlimd}.
Then we obtain \eqref{Xtall} which results in $\mathbb P_2(\overline{Q})=[0,\infty)$ and hence,  Assumption \ref{GGamcond}\ref{asi}. 

(c) Let $t_2>t_1\ge 0$ and $E=Q_{(t_1,t_2)}$.  Thanks to the isolation of the possible discontinuity points, there are at most finitely many discontinuity points of $R$ in $[t_1,t_2]$.
Together with the finite one-sided limits of $R$ everywhere, the function $R$ is bounded in $[t_1,t_2]$. Combining this with the boundedness of $X(t)$ in $[t_1,t_2]$, we have $E$ is bounded. Thus, Assumption \ref{GGamcond}\ref{asii} is satisfied.

(d) We claim that
\beq \label{QbZ1}
\overline Q=Z\eqdef \{(x,t)\in \R^n\times[0,\infty):|x-X(t)|\le R_+(t)\}.
\eeq

Firstly, we need to prove $\overline Q\subset Z$. Let $(x,t)\in \overline{Q}$. Then there is a sequence $(x_k,t_k)\in Q$ which converges to $(x,t)$ as $k\to\infty$. We have
\beq\label{xXR}
|x_k-X(t_k)|<R(t_k).
\eeq
Split the sequence $(x_k,t_k)$ into three subsequences corresponding to $t_k>t$, $t_k<t$, $t_k=t$ (whenever possible). Passing to the limit in \eqref{xXR} for each subsequence gives
\beqs
|x-X(t)|\le \max\left\{ R(t+), R(t-),R(t)\right\}
=\max\left \{ R(t+), R(t-)\right\}=R_+(t).
\eeqs
Thus, $(x,t)\in Z$. This proves $\overline Q\subset Z$.

Secondly, we need to prove $Z\subset \overline Q$. Let $(x,t)\in Z$.

\emph{Case  $R_+(t)=R(t+)$.}
We follow Step (d2) in the proof for Proposition \ref{gset}.
By \eqref{Rmc}, we have in this case
$R(t+)=R_+(t)\ge R_-(t)=R(t)$.
Thus,  we still have inequality \eqref{XXRe}.
The end result is $(x,t)\in \overline{Q}$. 

\emph{Case  $R_+(t)=R(t-)$.}
Note that this  cannot be the case $t=0$.
Then following the same proof as in the first case, replacing $R(t+)$ with $ R(t-)$, 
and replacing $\tau,\tau_\varep\in(t,t+\delta)$ with $\tau,\tau_\varep\in(t-\delta,t)$, we again have $(x,t)\in \overline{Q}$.

The conclusions in both cases imply $Z\subset \overline Q$. Then the claim \eqref{QbZ1} holds true.

(e) As a consequence of \eqref{QbZ1},
\beq\label{dQ1}
\partial Q=\overline{Q}\setminus Q=\overline{Q}_0\cup  S,
\eeq
where
\beq \label{SRR1}
S=\{(x,t)\in \R^{n}\times(0,\infty):R(t)\le |x-X(t)|\le R_+(t)\}.
\eeq
One can prove that the upper-base of $Q$ is
\beq\label{gQ1}
\gamma(Q)
=\left\{(x,t)\in \R^{n}\times(0,\infty):
 R(t-)>R(t+), 
R(t)< |x-X(t)|< R_+(t)\right\}.
\eeq
As a consequence of \eqref{dQ1}, \eqref{SRR1} and \eqref{gQ1}, the parabolic boundary is
\begin{align*}
&\Gamma(Q)
=\overline{Q}_0 \cup \left\{(x,t)\in S: R(t-)\le R(t+)\right\}\\
&\quad \cup \left\{(x,t)\in \R^{n}\times(0,\infty):
 R(t-)> R(t+), 
|x-X(t)|\in\{R(t),R_+(t)\} \right\}.
\end{align*}

(f) Consider the following two cases.

\emph{Case 1. $R$ is continuous at $t_2$.} then one can verify that 
\beqs
\gamma(E)_{t_2}=\{(x,t_2):|x-X(t)|<R(t)\}.
\eeqs
Thus, $\gamma(E)_{t_2}=Q_{t_2}\subset \widetilde Q$ which yields $(t_1,t_2)\in \mathcal G(Q)$.

\emph{Case 2. $R$ is discontinuous at $t_2$.}
For $\tau<t_2$ near $t_2$, we have $R$ is continuous at $\tau$ and 
\beq\label{EtauR}
\overline{E}_{\tau}=\overline{Q}_{\tau} =\left\{(x,\tau):x\in\R^n, |x-X(\tau)|\le  R(\tau)\right\}.
\eeq
Note, as $\tau\nearrow t_2$, $X(\tau)\to X(t_2)$ and $R(\tau)\to R(t_2-)$.

$\bullet$ If $ R(t_2-)> R(t_2+)$, then  
\beq\label{EtR1}
\overline{E}_{t_2}=\left\{(x,t_2):x\in\R^n, |x-X(t_2)|\le  R_+(t_2)\right\}.
\eeq
As $\tau\nearrow t_2$,  the radius $R(\tau)$ in \eqref{EtauR} converges to $R(t_2-)$. Note that this limit $R(t_2-)$ is greater than $R_+(t_2)$ which is the radius in \eqref{EtR1}.

$\bullet$ If $ R(t_2-)<  R(t_2+)$, then  
\beq\label{EtR2}
\overline{E}_{t_2}=\left\{(x,t_2):x\in\R^n, |x-X(t_2)|\le  R(t_2)\right\}.
\eeq
Similarly, as $\tau\nearrow t_2$,  the radius $R(\tau)$ in \eqref{EtauR} converges to $R(t_2-)=R(t_2)$ which is the radius in \eqref{EtR2}.

Together with the continuity of $X$, in both cases above,  we obtain
    \beq\label{Hsd}
    \lim_{\tau\nearrow t_2}\left (\sup_{X\in \overline{E}_{t_2}} {\rm dist}(X,\overline{E}_\tau)\right)=0.
    \eeq
For any integer $k\ge 1$, take $\tau_k<t_2$ sufficiently close to $t_2$ and $\tau_k\to t_2$ as $k\to\infty$. 
By Assumption \ref{Rco}\ref{R2}, $R$ is continuous at $\tau_k$. Applying Case 1 to $\tau_k$, we have $(t_1,\tau_k)\in\mathcal G(Q)$.
With this fact, by taking $\tau=\tau_k$ in \eqref{Hsd}, we obtain \eqref{Hsdk}.

Thus, Assumption \ref{GGamcond}\ref{asv} is satisfied.
Together with Parts (a)--(c), we have that Assumption \ref{GGamcond} is satisfied.

\section{Proof of Proposition \ref{gset2}}\label{apD}
We sketch the proof here. Details can be filled in as in the proofs of Propositions \ref{gset} and \ref{gset1}.
Again, same as in the proof of Proposition \ref{gset}, we only consider $J=(0,\infty)$.
For $i=1,2$, denote $R_{i,+}=(R_i)_+$ and $R_{i,-}=(R_i)_-$ as in \eqref{RRpm}.

Define the function $R=R_2-R_1:\R\times[0,\infty)\to [0,\infty)$.
Note from Assumption \ref{Rco}\ref{R2} that $R_1$, $R_2$ and hence $R$ are continuous at $0$.

(a) Again, $R$ not being identically zero implies $Q$ is not empty. 
Let $(x,t)\in Q$. Let 
\beqs
h=\min\{R_2(t)-x,x-R_1(t)\}.
\eeqs
There is sufficiently small $\delta>0$ such that one has, for all $(y,\tau)\in \mathcal C_{x,h/2}^{(t-\delta,t+\delta)}$, that 
\begin{align*}
y&=(y-x)+(x-R_1(t))+(R_1(t)-R_1(\tau))+R_1(\tau)
>-h/2+h-h/2+R_1(\tau)=R_1(\tau),\\
y&=(y-x)+(x-R_2(t))+(R_2(t)-R_2(\tau))+R_2(\tau)
<h/2-h+h/2+R_2(\tau)=R_2(\tau).
\end{align*}
Thus, $(y,\tau)\in Q$.
Hence $\mathcal C_{x,h/2}^{(t-\delta,t+\delta)}\subset Q$, that is, $Q$ is open.
Therefore, $Q$ satisfies Assumption \ref{GGamcond}\ref{as0}.

(b) Consider $t\ge 0$.
Denote $X(t)=\frac12(R_1(t)+R_2(t)).$
%If $t>0$ and $R(t)>0$, then $(X(t),t)\in Q$.

\emph{Case $R(t)>0$.} We have
\beq \label{hh1}
h\eqdef R(t)/2=R_2(t)-X(t)=X(t)-R_1(t)>0.
\eeq 
By the one-sided limits of $R_1(\tau)$ and $R_2(\tau)$ as $\tau\searrow t$ and \eqref{RR12}, there is $\delta>0$ such that it holds, for all $\tau\in(t,t+\delta)$,
\beqs
R_1(\tau)<R_1(t)+h/2, \quad R_2(\tau)>R_1(t)-h/2.
\eeqs
Then 
$R_1(\tau)<X(t)<R_2(\tau)$, hence $(X(t),\tau)\in Q$.  Taking $\tau\to t$, we have $(X(t),t)\in \overline{Q}$.

\emph{Case $R(t)=0$.}  By the isolation of $t$ in the set $R^{-1}(\{0\})$, we have $R(\tau)>0$ for $\tau>t$ near $t$.

$\bullet$ If $ R_1(t+)< X(t)< R_2(t+)$, then same as the case $R(t)>0$, by using 
$$h=\min\{X(t)-R_1(t+),R_2(t+)-X(t)\}>0$$ instead of \eqref{hh1}, we can prove $(X(t),t)\in \overline{Q}$.

$\bullet$ If $ R_i(t+)=X(t)$ for $i=1$ or $2$, thanks to $R(\tau)>0$, take $x_\tau\in (R_1(\tau),R_2(\tau))$ but near $ R_i(\tau)$.
Since $R_i(\tau)\to X(t)$,  as $\tau\searrow t$, one has $x_\tau\to X(t)$. Thus $(x_\tau,\tau)\in Q$ converges to $(X(t),t)$ as $\tau\searrow t$. Hence, $(X(t),t)\in \overline{Q}$.

In conclusion, $(X(t),t)\in \overline{Q}$ for all $t\in[0,\infty)$. Consequently,  $\mathbb P_2(\overline{Q})=[0,\infty)$, that is, Assumption \ref{GGamcond}\ref{asi} is satisfied.

(c) Let $t_2>t_1\ge 0$ and $E=Q_{(t_1,t_2)}$.  Thanks to the isolation of the possible discontinuity points, there are at most finitely many discontinuity points of $R$ in $[t_1,t_2]$.
Together with the finite one-sided limits of $R$ every where, the function $R$ is bounded in $[t_1,t_2]$. Combining this with the boundedness of $X(t)$ in $[t_1,t_2]$, we have $E$ is bounded. Thus, Assumption \ref{GGamcond}\ref{asii} is satisfied.

(d) Same as part (d) in the proof of  Proposition \ref{gset1}, we have 
\beq\label{QbZ2}
\overline{Q}=\{(x,t)\in\R\times[0,\infty):R_{1,-}(t)\le x\le R_{2,+}(t)\}.
\eeq

(e) As a consequence of \eqref{QbZ2},
\beqs
\partial Q=\overline{Q}\setminus Q=\overline{Q}_0\cup S_1\cup S_2,
\eeqs
where
\begin{align*}
S_1&=\left\{(x,t)\in \R\times(0,\infty):R_{1,-}(t)\le x\le R_1(t)\right\},\\
S_2&=\left\{(x,t)\in \R\times(0,\infty):R_{2}(t)\le x\le R_{2,+}(t)\right\}.
\end{align*}
The upper-base of $Q$ is
\beq\label{gQ2}
\gamma(Q)
=\left\{(x,t)\in S_1: R_1(t-)< R_1(t+)\right\}
 \cup \left\{(x,t)\in S_2: R_2(t-)> R_2(t+)\right\}.
\eeq
The parabolic boundary of $Q$ is 
\beqs
\Gamma(Q) 
=\overline{Q}_0 
\cup \left\{(x,t)\in S_1: R_1(t-)\ge  R_1(t+)\right\}\cup  \left\{(x,t)\in S_2: R_2(t-)\le  R_2(t+)\right\}.
\eeqs

(f) Let $t_2>t_1\ge 0$ and $E=Q_{(t_1,t_2)}$. 
For $\tau<t_2$ near $t_2$, the functions $R_1$ and $R_2$ are continuous at $\tau$, and 
\beq\label{Etauseg}
\overline{E}_{\tau}=\overline{Q}_{\tau} =[R_1(\tau),R_2(\tau)]\times\{\tau\}
=[(R_1(\tau),\tau),(R_2(\tau),\tau)],
\eeq
where the last notation $[\cdot,\cdot]$ denotes a line segment in $\R^2$, see \eqref{lineseg}.

Next, the set $\overline{E}_{t_2}$ can be described in all cases below.

$\bullet$ If $ R_1(t_2-)> R_1(t_2+)$ and  $ R_2(t_2-)< R_2(t_2+)$,  then  
$$\overline{E}_{t_2}=[R_1(t_2),R_2(t_2)]\times\{t_2\}
=[(R_1(t_2-),t_2),(R_2(t_2-),t_2)].$$

$\bullet$ If $ R_1(t_2-)\le  R_1(t_2+)$ and  $ R_2(t_2-)\ge  R_2(t_2+)$,  then  
$$\overline{E}_{t_2}=[R_{1,-}(t_2),R_{2,+}(t_2)]\times\{t_2\}
=[(R_1(t_2-),t_2),(R_2(t_2-),t_2)].$$

$\bullet$ If $ R_1(t_2-)> R_1(t_2+)$ and   $ R_2(t_2-)\ge  R_2(t_2+)$,  then  $$\overline{E}_{t_2}=[R_{1}(t_2),R_{2,+}(t_2)]\times\{t_2\}
=[(R_1(t_2-),t_2),(R_2(t_2-),t_2)].$$

$\bullet$ If $ R_1(t_2-)\le  R_1(t_2+)$ and  $ R_2(t_2-)< R_2(t_2+)$, then  
$$\overline{E}_{t_2}=[R_{1,-}(t_2),R_{2}(t_2)]\times\{t_2\}
=[(R_1(t_2-),t_2),(R_2(t_2-),t_2)].$$

In all four cases, the end points of $\overline{E}_{\tau}$ in \eqref{Etauseg} converge  to $(R_1(t_2-),t_2)$ and $(R_2(t_2-),t_2)$, as $t\nearrow t_2$, which are the corresponding end points of $\overline{E}_{t_2}$, thus \eqref{Hsd} is satisfied.
Combining this with Assumption \eqref{Rco}\ref{R2} for both $R_1$ and $R_2$, we deduce that  Assumption \ref{GGamcond}\ref{asv} is satisfied.
Together with Parts (a), (b) and (c) above, we conclude that Assumption \ref{GGamcond} is satisfied.

\bigskip
\noindent\textbf{Data availability.} 
No new data were created or analyzed in this study.

\medskip
\noindent\textbf{Methods.} 
No Artificial Intelligence Generated Content (AIGC) tools are used in developing any portion of this paper. 

\medskip
\noindent\textbf{Funding.} A.I.'s research is supported by OGRI, RAS, grant number 122022800272-4. 

\medskip
\noindent\textbf{Conflict of interest.}
There are no conflicts of interests.

\bibliography{paperbaseall}{}
\bibliographystyle{abbrv}
\end{document}